\providecommand{\U}[1]{\protect\rule{.1in}{.1in}}
\newtheorem{theorem}{Theorem}
\newtheorem{acknowledgement}[theorem]{Acknowledgement}
\newtheorem{claim}[theorem]{Claim}
\newtheorem{conjecture}[theorem]{Conjecture}
\newtheorem{corollary}[theorem]{Corollary}
\newtheorem{definition}[theorem]{Definition}
\newtheorem{lemma}[theorem]{Lemma}
\newtheorem{problem}[theorem]{Problem}
\newtheorem{proposition}[theorem]{Proposition}
\newenvironment{proof}[1][Proof]{\noindent\textbf{#1.} }{\ \rule{0.5em}{0.5em}}
\begin{document}

\date{}
\title{The Category Dichotomy for Ideals}
\author{Alan Dow \thanks{\textit{keywords: }Ideals on countable sets, Borel ideals,
Category Dichotomy for ideals, Kat\v{e}tov order, cardinal invariants.
\newline\textit{AMS Classification:} 03E05 ,03E17, 03E75}
\and Ra\'{u}l Figueroa-Sierra\thanks{ The second author was partially supported by
La Academia de Ciencias de América Latina-Programa de subvenciones a investigadores 
jóvenes y estudiantes de postgrado venezolanos y colombianos en laboratorios de la 
Región Latinoamericana (2023). The third author was supported
by the PAPIIT grant IA 104124 and the CONAHCYT grant CBF2023-2024-903. The
fourth author was supported by d by a PAPIIT grant IN101323.}
\and Osvaldo Guzm\'{a}n
\and Michael Hru\v{s}\'{a}k}
\maketitle

\begin{abstract}
We prove that there is an ideal on $\omega$ that is not Kat\v{e}tov below
\textsf{nwd }and does not have restrictions above $\mathcal{ED}.$ We also
prove that in the Laver model every tall \textsf{P}-ideal is Kat\v{e}tov-Blass
above $\mathcal{ED}_{\text{\textsf{fin}}}$ and that it is consistent that
every \textsf{Q}$^{+}$ ideal is meager.

\end{abstract}

\section{Introduction}

The theory of ideals on countable sets is now a fundamental part of set theory
and has countless applications in infinite combinatorics. For this reason, it
is essential to have tools to classify and order the ideals. One such tool is
the \emph{Kat\v{e}tov order} (which was introduced in \cite{KatetovOriginal}
by Kat\v{e}tov to study convergence in topological spaces) which has proven to
be highly fruitful. When restricted to maximal ideals, it coincides with the
more well-known \emph{Rudin-Keisler order}, which has been greatly studied in
the literature (see \cite{UltrafiltersMathematics}). The Kat\v{e}tov order
becomes particularly interesting when restricted to ideals with a certain
degree of definability, such as Borel or analytic. As it is often the case,
when restricting to definable objects, we find a richer structure that would
not be expected to be present in the general case. One such result is the
\emph{Category Dichotomy}\footnote{We point out that the two alternatives of
the Category Dichotomy are not mutually exclusive.} of the fourth author,
which is the following\footnote{The undefined notions will be reviewed in the
next section.}:

\begin{theorem}
[H. \cite{KatetovOrderonBorelIdeals}]\textbf{Category Dichotomy for Borel
Ideals}\emph{ }\label{Dicotomia Cat}

Let $\mathcal{I}$ be a Borel ideal on $\omega.$ One of the following holds:

\begin{enumerate}
\item $\mathcal{I}\leq_{\text{\textsf{K}}}$ \textsf{nwd.}

\item There is $X\in\mathcal{I}^{+}$ such that $\mathcal{ED}\leq
_{\text{\textsf{K}}}\mathcal{I}\upharpoonright X.$
\end{enumerate}
\end{theorem}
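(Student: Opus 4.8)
The plan is to prove the implication ``$\mathcal{I}\not\leq_{\text{\textsf{K}}}\textsf{nwd}\Rightarrow(2)$''; the reverse implication is unnecessary, since the two alternatives are not mutually exclusive. I would start by recasting both alternatives combinatorially. Fix a countable crowded metrizable space realizing $\textsf{nwd}$, presented by a Luzin scheme $\langle U_{s}:s\in\omega^{<\omega}\rangle$ of nonempty sets forming a base, with $U_{\emptyset}$ the whole space, $U_{s}=\bigsqcup_{n}U_{s\frown n}$ and $\operatorname{diam}U_{s}\to0$. Then a Kat\v{e}tov map $f$ witnessing $\mathcal{I}\leq_{\text{\textsf{K}}}\textsf{nwd}$ corresponds, via $X_{s}:=f[U_{s}]$, to a \emph{scheme of avoidance}: a system $\langle X_{s}:s\in\omega^{<\omega}\rangle$ of nonempty subsets of $\omega$ with $X_{\emptyset}\in\mathcal{I}^{+}$, $X_{s}=\bigcup_{n}X_{s\frown n}$, such that for every $A\in\mathcal{I}$ and every $s$ there is $t\supseteq s$ with $X_{t}\cap A=\emptyset$; conversely any such scheme produces a witnessing map, and one checks that in any scheme of avoidance every $X_{s}$ is forced to be $\mathcal{I}$-positive (otherwise $A:=X_{s}$ could not be avoided below $s$). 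Dually, for $X\in\mathcal{I}^{+}$ the statement ``$\mathcal{ED}\leq_{\text{\textsf{K}}}\mathcal{I}\upharpoonright X$'' is equivalent to the existence of a partition $X=\bigsqcup_{n,k}X_{n,k}$ in which each column $\bigcup_{k}X_{n,k}$ lies in $\mathcal{I}$ and each selector $\bigcup_{n}X_{n,g(n)}$, $g\in\omega^{\omega}$, lies in $\mathcal{I}$ (columns come from the vertical lines of $\mathcal{ED}$, selectors from graphs of functions). These reformulations are standard; with them, (1) reads ``a scheme of avoidance exists'' and (2) reads ``some $X\in\mathcal{I}^{+}$ carries such a partition''.

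I would then analyze a game $\mathcal{G}$ of length $\omega$. Builder constructs a system $\langle X_{s}\rangle$ of the above shape, each round adding finitely many elements to finitely many entries and committing to an index $k_{n}\in\omega$ marking where it works; Spoiler names a node $s^{\ast}$ at the outset and then builds an increasing chain $F_{0}\subseteq F_{1}\subseteq\cdots$ of finite sets. With $A:=\bigcup_{n}F_{n}$, Builder wins iff $A\notin\mathcal{I}$ or $X_{t}\cap A=\emptyset$ for some $t\supseteq s^{\ast}$. Since $\mathcal{I}$ is Borel the payoff is Borel, so $\mathcal{G}$ is determined by Martin's theorem. If Builder has a winning strategy, one reads off a scheme of avoidance, hence (1); the one subtlety is \emph{uniformity} — a strategy yields a different system for each run of Spoiler, whereas one system must defeat every $A\in\mathcal{I}$. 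I would get around this by reflection: ``$\mathcal{I}$ admits a scheme of avoidance'' is $\Sigma^{1}_{2}$ in a Borel code for $\mathcal{I}$, so by Shoenfield absoluteness it suffices to build a system defeating only the countably many members of $\mathcal{I}$ lying in a fixed countable elementary submodel of a large $H_{\theta}$ that contains the code — and that is a plain fusion along $\omega^{<\omega}$, diagonalizing over an enumeration of those sets.

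The remaining case is that Spoiler has a winning strategy $\tau$, and this is where the real work is. Since Spoiler commits to $s^{\ast}$ first, $\tau$ fixes a single node $s^{\ast}$. Playing $\tau$ against every Builder play, one finds that along each such play $\tau$ keeps $\bigcup_{n}F_{n}$ in $\mathcal{I}$ while forcing the set it builds to meet $X_{t}$ for every $t\supseteq s^{\ast}$; indexing the elements $\tau$ introduces by the round $n$ and by Builder's commitment $k_{n}$ at that round assembles them into a partition $X=\bigsqcup_{n,k}X_{n,k}$ of an associated $X\subseteq\omega$. That $\tau$ always keeps $\bigcup F_{n}\in\mathcal{I}$ forces every column $\bigcup_{k}X_{n,k}$ and every selector $\bigcup_{n}X_{n,g(n)}$ into $\mathcal{I}$; and — the heart of the proof — the fact that $\tau$ defeats \emph{every} Builder play, including those in which Builder chases the partial information about $X$ it sees accumulating, is exactly what forces $X\in\mathcal{I}^{+}$ rather than $X\in\mathcal{I}$. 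By the first paragraph this yields $\mathcal{ED}\leq_{\text{\textsf{K}}}\mathcal{I}\upharpoonright X$, that is, (2). I expect the two genuinely hard points to be the bookkeeping that turns the responses of $\tau$ into an honest double-indexed partition and, above all, the verification that $X$ is $\mathcal{I}$-positive, which is precisely the step that uses that Spoiler's strategy is winning and not merely that Builder's is not.
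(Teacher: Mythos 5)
This theorem is not proved in the paper at all: it is quoted from \cite{KatetovOrderonBorelIdeals}, and the paper only remarks that the known proof goes through the finer trichotomy (below \textsf{nwd}, or a restriction above \textsf{fin}$\times$\textsf{fin}, or a restriction above $\mathcal{ED}_{\text{\textsf{fin}}}$) and is of a determinacy flavour. So your general plan --- recode both alternatives combinatorially, design a game whose two kinds of winning strategies yield the two alternatives, and invoke Borel determinacy plus absoluteness --- is the right family of ideas. However, as written your argument has a fatal gap on each side of the dichotomy, and one of your proposed repairs is demonstrably unsound.

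First, the fix for the uniformity problem does not work. For \emph{any} ideal $\mathcal{I}$ and any countable family $\{A_{k}\mid k\in\omega\}\subseteq\mathcal{I}$ there is trivially a ``scheme of avoidance'' defeating exactly those sets: put $X_{\emptyset}=\omega$ and, recursively, $X_{s^{\frown}0}=X_{s}$ and $X_{s^{\frown}(k+1)}=X_{s}\setminus A_{k}$; each $X_{s}$ is the complement of a finite union of members of $\mathcal{I}$, hence $\mathcal{I}$-positive, $X_{s}=\bigcup_{n}X_{s^{\frown}n}$, and $X_{s^{\frown}(k+1)}\cap A_{k}=\emptyset$. Thus ``defeat the members of $\mathcal{I}$ lying in a countable elementary submodel'' is achievable for every ideal --- including $\mathcal{ED}$ itself, which is \emph{not} Kat\v{e}tov below \textsf{nwd} --- so that reduction cannot yield alternative (1). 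Shoenfield/Mostowski absoluteness would only help if you produced a scheme \emph{inside} the countable model (so that the $\Pi^{1}_{1}$ statement ``this scheme avoids every $A\in\mathcal{I}$'' can be transferred upward), or if you could replay your fusion as a single legal run against a hypothetical Spoiler win; your sketch does neither, and merely testing a scheme built in $V$ against $\mathcal{I}\cap M$ proves nothing. Second, the extraction of the $\mathcal{ED}$-witness from Spoiler's strategy $\tau$ is asserted rather than proved: the proposed cells $X_{n,k}$ collect integers that $\tau$ plays in \emph{different} runs, so no single run certifies that a column $\bigcup_{k}X_{n,k}$ or a selector $\bigcup_{n}X_{n,g(n)}$ lies in $\mathcal{I}$ ($\tau$ only guarantees $\bigcup_{n}F_{n}\in\mathcal{I}$ along one run), and no argument at all is offered for the key point that the underlying set $X$ is $\mathcal{I}$-positive --- you yourself label these ``the heart of the proof,'' but that is precisely where a proof is required. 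As it stands, then, the proposal is a reasonable program in the same spirit as the cited argument, but both halves of the dichotomy are missing their essential steps, and the elementary-submodel shortcut must be abandoned or replaced by a genuinely different uniformization argument.
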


We can now ask whether it is possible to extend this dichotomy to a larger
class of ideals. The first question is whether the dichotomy holds for all of
them. In other words:\qquad\qquad\qquad\ 

\begin{problem}
Is every ideal on $\omega$ either Kat\v{e}tov below \textsf{nwd }or has a
restriction above $\mathcal{ED}?$ \qquad\ \ \ \ \ \ \ 
\end{problem}

One would expect the answer to be negative, and it is easy to produce such
examples assuming the \emph{Continuum Hypothesis} (\textsf{CH}) \ However,
finding an answer within \textsf{ZFC} is much harder. The main result of this
article is to provide such an example. Upon examining the proof of the Theorem
\ref{Dicotomia Cat}, it is evident that the argument can be extended to all
ideals if we assume certain determinacy for transfinite games
(\textsf{AD($\mathbb{R}$) }is more than enough). Furthermore, the third author
and Jareb Navarro recently proved that the Category Dichotomy holds for all
ideals in the Solovay model. In this way, the \emph{Axiom of Choice
(\textsf{AC})} must play a role if we are to construct a counterexample to the
dichotomy. While looking for the example, we explored the following classes of ideals:

\begin{enumerate}
\item Maximal ideals.

\item Ideals generated by \textsf{MAD }families.

\item \textsf{P}-ideals.

\item Ideals induced by independent families.

\item Ideals of nowhere dense sets of countable topological spaces.
\end{enumerate}

\qquad\qquad\qquad

We will prove that the desired example can be found in the last class; there
is a countable topological space $X$ \ such that the ideal of nowhere dense
sets of $X$ does not satisfy the Category Dichotomy. Such space already
appears in the literature, it is the space constructed by the first author in
\cite{PiWeightandFrechetProperty}, which he used to answer a question of
Juh\'{a}sz. \bigskip

Given $\Gamma$ a class of ideals on $\omega,$ we will say that the
\emph{Category Dichotomy holds for }$\Gamma$ if every ideal on $\Gamma$ is
either below \textsf{nwd }or has a restriction above $\mathcal{ED}.$ The
following table summarizes our results regarding the classes of ideals
mentioned before:

\qquad\qquad\qquad\ \ \ 

\begin{center}%
\begin{tabular}
[c]{|c|c|}\hline
\textbf{Class of ideals} & \textbf{Category Dichotomy}\\\hline
& \\
Borel & True\\
& \\
Maximal ideals & Independent\\
& \\
Ideals generated & Consistently false, unknown\\
by \textsf{MAD} families & if consistently true\\
& \\
\textsf{P}-ideals & Independent\\
& \\
Non-meager ideals & Independent\\
& \\
Nowhere dense ideals induced & Consistently false, unknown\\
by independent families & if consistently true\\
& \\
Ideals of nowhere dense sets of & False\\
countable topological spaces & \\\hline
\end{tabular}
\smallskip

\textsf{Table 1. The Category Dichotomy for some classes of ideals}
\end{center}

\qquad\qquad\ \ \textsc{\ }

The paper is organized in the following way: first we review all the
definitions and results regarding the Kat\v{e}tov order and definable ideals
that will be used in the paper. In Section \ref{seccion non} we introduce the
\emph{uniformity number for a class of ideals}$.$ Concretely, if $\Gamma$ is a
class of ideals on $\omega,$ by \textsf{non}$\left(  \Gamma\right)  $ we
denote the least cofinality of an ideal that is not in $\Gamma.$ We review
what is known regarding this invariants and make some small remarks$.$
Sections \ref{seccion ultrafiltro} and \ref{seccion MAD} are respectively
devoted to maximal ideals and ideals generated by \textsf{MAD }families. We
prove that the Category Dichotomy for maximal ideals is equivalent to the non
existence of Ramsey ultrafilters and the Category Dichotomy for ideals
generated by \textsf{MAD }families is equivalent to the non existence of Cohen
indestructible maximal almost disjoint families. Section \ref{seccion P ideal}
is dedicated to \textsf{P}-ideals. We prove that the Category Dichotomy for
this class fails if either $\mathfrak{t<}$ \textsf{cov}$\left(  \mathcal{M}%
\right)  $ or $\mathfrak{t=d}.$ On the other hand, we show that it is true in
the Laver model. In Section \ref{no magro y Q mas} we prove that it is
consistent that every \textsf{Q}$^{+}$ ideal is meager, hence it is consistent
that every non-meager ideal satisfies the Category Dichotomy. In Section
\ref{seccion ideales nwd} we look at ideals of the form \textsf{nwd}$\left(
X\right)  $ where $X$ is a countable topological space. The desired
counterexample to the dichotomy can be found within this class. In Section
\ref{seccion familias independientes} we look at the nowhere dense ideals of
the spaces induce by independent families and prove that the Category
Dichotomy may fail for this class. 

\section{Preliminaries and notation}

For a set $X$, we denote by $\mathcal{P}\left(  X\right)  $ its power set. We
say that $\mathcal{I}\subseteq\mathcal{P}\left(  X\right)  $ is an \emph{ideal
on }$X$ if $\emptyset\in\mathcal{I}$ and $X\notin\mathcal{I},$ for every
$A,B\subseteq X,$ if $A\in\mathcal{I}$ and $B\subseteq A$ then $B\in
\mathcal{I}$ and if $A,B\in\mathcal{I}$ then $A\cup B\in\mathcal{I}.$ On the
other hand, $\mathcal{F\subseteq}$ $\wp\left(  X\right)  $ is a \emph{filter
on }$X$ if $X\in\mathcal{F}$ and $\emptyset\notin\mathcal{F},$ for every
$A,B\subseteq X,$ if $A\in\mathcal{F}$ and $A\subseteq B$ then $B\in
\mathcal{F}$ and if $A,B\in\mathcal{F}$ then $A\cap B\in\mathcal{F}.$ An
\emph{ultrafilter} is a maximal filter that does not contain any finite set
(for us, all ultrafilters are non principal). Given a family $\mathcal{B}$ of
subsets of $X,$ we define $\mathcal{B}^{\ast}=\left\{  X\setminus B\mid
B\in\mathcal{B}\right\}  .$ It is easy to see that if $\mathcal{F}$ is a
filter then $\mathcal{F}^{\ast}$ is an ideal (called the \emph{dual ideal of
}$\mathcal{F}$) and if $\mathcal{I}$ an ideal then $\mathcal{I}^{\ast}$ is a
filter (called the dual filter of $\mathcal{I}$). If $\mathcal{I}$ is an ideal
on $X$, we let $\mathcal{I}^{+}=\wp\left(  X\right)  \smallsetminus
\mathcal{I}$ be the family of $\mathcal{I}$\emph{-positive sets. }If
$\mathcal{F}$ is a filter, we define $\mathcal{F}^{+}$ $\mathcal{=}$ $\left(
\mathcal{F}^{\ast}\right)  ^{+};$ it is easy to see that $\mathcal{F}^{+}$ is
the family of all sets that have non-empty (infinite) intersection with every
element of $\mathcal{F}.$ If $A\in\mathcal{I}^{+}$ then \emph{the restriction
of }$\mathcal{I}$ to $A$, defined as $\mathcal{I\upharpoonright}A=\wp\left(
A\right)  \cap\mathcal{I}$, is an ideal on $A.$

\qquad\ \ \ \qquad\ \ 

The \emph{cardinal invariants of the continuum} will play a fundamental role
in this paper. We begin with a brief review of their definitions and key
properties, focusing on those relevant to our discussion. For a more
comprehensive treatment, we refer the reader to \cite{HandbookBlass} and
\cite{Barty}.

\qquad\qquad\ \ \ 

By $\mathfrak{c}$ we denote the size of the real numbers. Letting
$f,g\in\omega^{\omega}$, define $f\leq g$ if and only if $f\left(  n\right)
\leq g\left(  n\right)  $ for every $n\in\omega$ and $f\leq^{\ast}g$ if and
only if $f\left(  n\right)  \leq g\left(  n\right)  $ holds for all
$n\in\omega$ except finitely many. We say a family $\mathcal{B}\subseteq
\omega^{\omega}$ is \emph{unbounded }if $\mathcal{B}$ is unbounded with
respect to $\leq^{\ast}.$ A family $\mathcal{D}\subseteq\omega^{\omega}$ is a
\emph{dominating family }if for every $f\in\omega^{\omega},$ there is
$g\in\mathcal{D}$ such that $f\leq^{\ast}g.$ The \emph{bounding number
}$\mathfrak{b}$ is the size of the smallest unbounded family and the
\emph{dominating number }$\mathfrak{d}$ is the smallest size of a dominating
family. It is easy to see that $\mathfrak{d}$ is also the least size of a
dominating family with the more strict order $\leq$ (on the other hand, the
least size of an unbounded family with $\leq$ is simply $\omega$). The
invariant $\mathfrak{d}$ can also be characterized using \emph{interval
partitions }(partitions of $\omega$ into intervals). Given $P$ and $R$
interval partitions, define $P\leq R$ if every interval of $R$ contains at
least one interval of $P.$ It can be proved that the dominating number is
equal to the least size of a dominating family of interval partitions (there
is a similar characterization of $\mathfrak{b},$ but we do not mention it
since it will not be used in the text).

\qquad\qquad\qquad\qquad\ \ \ 

Letting $X$ be a topological space and $N\subseteq X,$ we say $N$ is
\emph{nowhere dense }if for every non empty open set $U\subseteq X,$ we can
find a non empty open set $V$ such that $V\subseteq U$ and $V\cap
N=\emptyset.$ It is very easy to see that the closure of a nowhere dense set
is also nowhere dense. A \emph{meager set }is a countable union of nowhere
dense sets. By \textsf{cov}$\left(  \mathcal{M}\right)  $ we denote the
smallest size of a family of meager sets that covers $\omega^{\omega}$.

\qquad\ \ \ \ 

For any two sets $A$ and $B,$ we say $A\subseteq^{\ast}B$ ($A$ is an almost
subset of $B$) if $A\setminus B$ is finite. For $\mathcal{H\subseteq}$
$\left[  \omega\right]  ^{\omega}$ and $A,B\subseteq\omega,$ we say that
$A\ $\emph{is a pseudointersection of }$\mathcal{H}$ if it is almost contained
in every element of $\mathcal{H}.$ On the other hand, $B$ \emph{is a
pseudounion of }$\mathcal{H}$ if it almost contains every element of
$\mathcal{H}.$ We say $\mathcal{T}=\left\{  A_{\alpha}\mid\alpha
<\kappa\right\}  $ is a \emph{tower }if it is $\subseteq^{\ast}$-decreasing
and has no infinite pseudointersection. On the other hand, we call
$\mathcal{S}$ $=\left\{  A_{\alpha}\mid\alpha<\kappa\right\}  $ an
\emph{increasing tower }if $\left\{  \omega\setminus A_{\alpha}\mid
\alpha<\kappa\right\}  $ is a tower. The \emph{tower number }$\mathfrak{t}$ is
the least length of a tower. It is known that $\mathfrak{t\leq b}$ and
$\mathfrak{d}$ is larger than both $\mathfrak{b}$ and \textsf{cov}$\left(
\mathcal{M}\right)  .$ Evidently, $\mathfrak{c}$ is larger than all the other
cardinal invariants.

\qquad\qquad\qquad\qquad\ \ \ 

We say that $T\subseteq\omega^{<\omega}$ is a \emph{tree }if it is closed
under taking initial segments. If $s\in T$ we define \textsf{suc}$_{T}\left(
s\right)  =\left\{  n\mid s^{\frown}n\in T\right\}  $ (where $s^{\frown}n$ is
the sequence that has $s$ as an initial segment and $n$ in the last entry). If
$T\subseteq\omega^{<\omega}$ we say that $f\in\omega^{\omega}$ is a
\emph{branch of }$T$ if $f\upharpoonright n\in T$ for every $n\in\omega.$ The
set of all branches of $T$ is denoted by $\left[  T\right]  $. For every
$n\in\omega$ we define $T_{n}=\left\{  s\in T\mid\left\vert s\right\vert
=n\right\}  .$ If $s\in\omega^{<\omega}$ then the \emph{cone of }$s$ is
defined as $\left\langle s\right\rangle =\left\{  f\in\omega^{\omega}\mid
s\subseteq f\right\}  .$ Letting $\mathcal{X}\subseteq\left[  \omega\right]
^{\omega},$ we say a tree $T\subseteq\omega^{<\omega}$ is a $\mathcal{X}%
$\emph{-branching tree} if \textsf{suc}$_{T}\left(  s\right)  \in\mathcal{X}$
for every $s\in T$.

\section{Preliminaries on ideals on countable sets}

In this section, we gather the definitions and results on ideals in countable
sets and the Kat\v{e}tov order that will be needed for the paper. Readers
interested in learning more about these topics are encouraged to consult
\cite{KatetovOrderonBorelIdeals}, \cite{OrderingMADFamiliesalaKatetov},
\cite{CombinatoricsofFiltersandIdeals}, \cite{StructuralKatetov},
\cite{ForcingIndestructibilityofMADFamilies}, \cite{KatetovOrderImply},
\cite{KatetovandKatetovBlassOrdersFsigmaIdeals}, \cite{SakaiKatetov},
\cite{KatetovMAD}, \cite{InvariancePropertiesofAlmostDisjointFamilies},
\cite{KwelaUltrafiltersKatetov}, \cite{KwelaExtendability} or
\cite{KatetovHindman} among others.

\qquad\ \ \ \qquad\ \ \ \ \ \ \ \ \ \ \ \ \ \ \ \ 

We will be mainly interested in filters and ideals on countable sets. Topology
turns out to be extremely useful when studying them. We endow $\mathcal{P}%
\left(  \omega\right)  $ with the natural topology that makes it homeomorphic
to $2^{\omega}$. In this way, the topology of $\mathcal{P}\left(
\omega\right)  $ has as a subbase the sets of the form $\left\langle
n\right\rangle _{0}=\left\{  A\subseteq\omega\mid n\notin A\right\}  $ and
$\left\langle n\right\rangle _{1}=\left\{  A\subseteq\omega\mid n\in
A\right\}  $, for $n\in\omega.$ We view filters and ideals as subspaces of
$\mathcal{P}\left(  \omega\right)  .$ All notions of Borel, analytic or meager
are referred to this topology. The following Borel ideals will play a key role
on the paper. For the next definitions, given $n\in\omega$, denote the column
$C_{n}=\left\{  \left(  n,m\right)  \mid m\in\omega\right\}  $ and
$\mathcal{C}=\left\{  C_{n}\mid n\in\omega\right\}  .$ Given $f\in
\omega^{\omega},$ denote $D\left(  f\right)  =\left\{  \left(  n,m\right)
\in\omega\times\omega\mid m\leq f\left(  n\right)  \right\}  .$

\begin{enumerate}
\item The ideal \textsf{fin }is the ideal of finite subsets of $\omega.$

\item The \emph{eventually different} ideal $\mathcal{ED}$ is the ideal on
$\omega^{2}$ generated by $\mathcal{C}$ and the graphs of functions from
$\omega$ to $\omega.$

\item The ideal $\mathcal{ED}_{\text{\textsf{fin}}}$ is the restriction of
$\mathcal{ED}$ to $\triangle=\left\{  \left(  n,m\right)  \mid m\leq
n\right\}  .$

\item The ideal \textsf{fin}$\times$\textsf{fin }is the ideal on $\omega^{2}$
generated by $\mathcal{C\cup}\left\{  D\left(  f\right)  \mid f\in
\omega^{\omega}\right\}  .$

\item The \emph{nowhere dense ideal, }\textsf{nwd }is the ideal of nowhere
dense subsets of the rational numbers.
\end{enumerate}

\qquad\ \ \qquad\ \ \ 

We now list some of the main notions concerning ideals on countable sets.

\begin{definition}
Let $\mathcal{I}$ be an ideal on $\omega$ (or any countable set).

\begin{enumerate}
\item $\mathcal{I}$ is \emph{tall }if for every $X\in\left[  \omega\right]
^{\omega}$ there is $Y\in\mathcal{I}$ such that $Y\cap X$ is infinite.

\item $\mathcal{I}$ is $\omega$\emph{-hitting }if for every $\left\{
X_{n}\mid n\in\omega\right\}  \subseteq\left[  \omega\right]  ^{\omega}$ there
is $Y\in\mathcal{I}$ such that $Y\cap X_{n}$ is infinite for every $n\in
\omega.$

\item $\mathcal{I}$ is \emph{tight }if for every $\left\{  X_{n}\mid
n\in\omega\right\}  \subseteq\mathcal{I}^{+},$ there is $Y\in\mathcal{I}$ such
that $Y\cap X_{n}$ is infinite for every $n\in\omega.$

\item $\mathcal{I}$ is a \emph{P-ideal }if every countable subfamily of
$\mathcal{I}$ has a pseudounion in $\mathcal{I}.$

\item $\mathcal{I}$ is a \emph{P}$^{+}$\emph{-ideal }if every $\subseteq
$-decreasing family $\left\{  X_{n}\mid n\in\omega\right\}  \subseteq
\mathcal{I}^{+}$ has a pseudointersection in $\mathcal{I}^{+}.$

\item $\mathcal{I}$ is a \emph{P}$^{-}$\emph{-ideal }if for every
$X\in\mathcal{I}^{+}$, any $\left\{  X_{n}\mid n\in\omega\right\}
\subseteq\left(  \mathcal{I}\upharpoonright X\right)  ^{\ast}$ has a
pseudointersection in $\mathcal{I}^{+}.$

\item $\mathcal{I}$ is a \emph{Q}$^{+}$\emph{-ideal }if for every
$X\in\mathcal{I}^{+}$ and every partition $\mathcal{P}=\left\{  P_{n}\mid
n\in\omega\right\}  $ of $X$ into finite sets, there is $A\in\mathcal{I}%
^{+}\cap\wp\left(  X\right)  $ such that $\left\vert A\cap P_{n}\right\vert
\leq1$ for every $n\in\omega.$ Such $A$ is called a \emph{partial selector of}
$\mathcal{P}$ and a \emph{selector }in case it intersects each $P_{n}.$

\item $\mathcal{I}$ is \emph{selective }if it is both \emph{P}$^{+}$ and
\emph{Q}$^{+}.$

\item $\mathcal{I}$ is \emph{weakly selective }if for every $X\in
\mathcal{I}^{+}$ and $\mathcal{P}$ a partition of $X,$ either $\mathcal{P\cap
I\neq\emptyset},$ or $\mathcal{P}$ has a (partial) selector in $\mathcal{I}%
^{+}.$

\item $\mathcal{I}$ is $+$\emph{-Ramsey }if every $\mathcal{I}^{+}$-branching
tree $T\,\ $has a branch in $\mathcal{I}^{+}.$

\item $\mathcal{I}$ is \emph{Cohen indestructible (}or $\mathbb{C}%
$-\emph{indestructible) }if $\mathcal{I}$ remains tall after adding a Cohen
real to the universe.
\end{enumerate}
\end{definition}

\qquad\ \ \ 

The notions dualize to filters. For example, a \emph{P-filter }is a filter
whose dual ideal is a \emph{P}-ideal. The same convention applies to all the
other properties listed above. We will switch between filters and ideals as
needed, depending on what is most convenient at the time. An ultrafilter is
called a \emph{Ramsey ultrafilter }if its dual is a selective ideal. The
following is a list of simple observations regarding the aforementioned notions:

\begin{enumerate}
\item $\omega$-hitting ideals are tall and tight.

\item A tall \textsf{P}-ideal is $\omega$-hitting.

\item An ideal is weakly selective if and only if it is \emph{P}$^{-}$ and
\emph{Q}$^{+}.$

\item Either \emph{P} or \emph{P}$^{+}$ implies \emph{P}$^{-}.$

\item A $+$-Ramsey ideal is weakly selective.

\item If $\mathcal{U}$ is an ultrafilter, then $\mathcal{U}^{\ast}$ is
$\omega$-hitting.

\item $\mathcal{I}$ is tight\emph{ }if and only if for every $\left\{
X_{n}\mid n\in\omega\right\}  \subseteq\mathcal{I}^{+},$ there is
$Y\in\mathcal{I}$ such that $Y\cap X_{n}\neq\emptyset$ for every $n\in\omega$
(in order to prove the non trivial implication, it is enough to realize we can
add to our original family all finite modifications of each $X_{n}$).
\end{enumerate}

\qquad\ \ \ \ \ \ \ \ 

We now look at cardinal invariants associated to ideals on countable sets.

\begin{definition}
Let $\mathcal{I}$ be a tall ideal on $\omega.$ Define:

\begin{enumerate}
\item \textsf{non*}$\left(  \mathcal{I}\right)  $ is the smallest size of a
family $\mathcal{H\subseteq}$ $\left[  \omega\right]  ^{\omega}$ such that for
every $A\in\mathcal{I},$ there is $H\in\mathcal{H}$ such that $A\cap H$ is finite.

\item \textsf{cof}$\left(  \mathcal{I}\right)  $ is the smallest size of a
cofinal family in $\left(  \mathcal{I},\subseteq\right)  .$
\end{enumerate}
\end{definition}

\qquad\ \ \ \ \ \ \ 

The invariant \textsf{non}$^{\ast}\left(  \mathcal{I}\right)  $ was introduced
by Brendle and Shelah in \cite{Ultrafiltersonomega}, but with a different
name. The notation used here follows
\cite{CardinalInvariantsofAnalyticPIdeals}. It is easy to see
that\textsf{non*}$\left(  \mathcal{I}\right)  $ $\leq$ \textsf{cof}$\left(
\mathcal{I}\right)  .$

\begin{definition}
Let $X,Y$ be two sets, $\mathcal{I}$ an ideal on $X$, $\mathcal{J}$ an ideal
on $Y$ and $f:X\longrightarrow Y.$

\begin{enumerate}
\item $f$ is a \emph{Kat\v{e}tov function from} $\mathcal{I}$ to $\mathcal{J}$
if for every $A\subseteq Y$, the following holds:

\hfill%
\begin{tabular}
[c]{l}%
If $A\in\mathcal{J},$ then $f^{-1}\left(  A\right)  \in\mathcal{I}.$%
\end{tabular}
\hfill\qquad

\item $f$ is a \emph{Kat\v{e}tov-Blass function from} $\mathcal{I}$ to
$\mathcal{J}$ if it is a Kat\v{e}tov function and it is finite to one.

\item $f$ is a \emph{Rudin-Keiser function from} $\mathcal{I}$ to
$\mathcal{J}$ if for every $A\subseteq Y$, the following holds:

\hfill%
\begin{tabular}
[c]{l}%
$A\in\mathcal{J}$ if and only if $f^{-1}\left(  A\right)  \in\mathcal{I}.$%
\end{tabular}
\hfill\qquad

\item $f$ is a \emph{Rudin-Blass function from} $\mathcal{I}$ to $\mathcal{J}$
if it is a Rudin-Keisler function and it is finite to one.

\item $\mathcal{J}\leq_{\text{\textsf{K}}}\mathcal{I}$ if there is a
Kat\v{e}tov function from $\mathcal{I}$ to $\mathcal{J}.$ The orders
$\leq_{\text{\textsf{KB}}},$ $\leq_{\text{\textsf{RK}}}$ and $\leq
_{\text{\textsf{RB}}}$ are defined analogously.

\item $\mathcal{I}$ and $\mathcal{J}$ are \emph{Kat\v{e}tov equivalent
}(denoted as $\mathcal{I=_{\text{\textsf{K}}}J}$) if $\mathcal{I\leq
_{\text{\textsf{K}}}J}$ and $\mathcal{J\leq_{\text{\textsf{K}}}I}.$
\end{enumerate}
\end{definition}

\qquad\ \ \ \qquad\ \ \qquad\ 

The following is a list of easy facts about the Kat\v{e}tov order.

\begin{lemma}
Let $\mathcal{I},$ $\mathcal{J}$ be ideals on $\omega$ and $X\subseteq\omega.$
\label{Lema Katetov basico}

\begin{enumerate}
\item If $\mathcal{I\subseteq J},$ then $\mathcal{I}\leq_{\text{\textsf{KB}}%
}\mathcal{J}.$

\item \textsf{fin }$\leq_{\text{\textsf{KB}}}\mathcal{I}.$

\item $\mathcal{I}$ is Kat\v{e}tov equivalent to \textsf{fin }if and only if
$\mathcal{I}$ is not tall.

\item If $X\in\mathcal{I}^{+},$ then $\mathcal{I}$ $\leq_{\text{\textsf{KB}}%
}\mathcal{I}\upharpoonright X.$

\item If $\mathcal{I}\leq_{\text{\textsf{KB}}}\mathcal{J},$ then
\textsf{non}$^{\ast}\left(  \mathcal{I}\right)  \leq$ \textsf{non}$^{\ast
}\left(  \mathcal{J}\right)  .$
\end{enumerate}
\end{lemma}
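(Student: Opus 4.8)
The plan is to prove each of the five items by unwinding the definitions of the Katětov and Katětov--Blass orders and of the invariant $\textsf{non}^{\ast}$. For item (1), if $\mathcal{I}\subseteq\mathcal{J}$ then the identity map $\mathrm{id}:\omega\to\omega$ works: it is finite-to-one, and for any $A\in\mathcal{I}$ we have $\mathrm{id}^{-1}(A)=A\in\mathcal{I}\subseteq\mathcal{J}$, so it witnesses $\mathcal{I}\leq_{\textsf{KB}}\mathcal{J}$ (being careful that the convention has the Katětov function going from the larger ideal $\mathcal{J}$ to the smaller one $\mathcal{I}$). Item (2) is the special case $\mathcal{I}\supseteq\textsf{fin}$, which holds for every ideal by definition, so it follows from (1).

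For item (3), the nontrivial direction is that if $\mathcal{I}$ is not tall then $\mathcal{I}=_{\textsf{K}}\textsf{fin}$; the other direction and $\textsf{fin}\leq_{\textsf{K}}\mathcal{I}$ come from (2). If $\mathcal{I}$ is not tall, fix $X\in[\omega]^{\omega}$ such that every element of $\mathcal{I}$ meets $X$ in a finite set. Enumerate $X=\{x_{n}:n\in\omega\}$ increasingly and define $f:\omega\to\omega$ by $f(x_{n})=n$ and $f(k)=0$ for $k\notin X$ (or send $\omega\setminus X$ cofinitely; any choice making preimages of finite sets land inside $\mathcal{I}$ works). Then for $A\in\textsf{fin}$, $f^{-1}(A)\subseteq X$ is finite hence in $\mathcal{I}$ after absorbing the finitely many non-$X$ points, so $f$ witnesses $\mathcal{I}\leq_{\textsf{K}}\textsf{fin}$. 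For item (4), if $X\in\mathcal{I}^{+}$ then the inclusion map $\iota:X\hookrightarrow\omega$ is finite-to-one and, for $A\in\mathcal{I}$, $\iota^{-1}(A)=A\cap X\in\mathcal{I}\upharpoonright X$ since it is a subset of $X$ lying in $\mathcal{I}$; this gives $\mathcal{I}\leq_{\textsf{KB}}\mathcal{I}\upharpoonright X$.

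Item (5) is the one requiring a genuine (though short) argument. Suppose $f:\omega\to\omega$ is a finite-to-one Katětov function witnessing $\mathcal{I}\leq_{\textsf{KB}}\mathcal{J}$, and let $\mathcal{H}\subseteq[\omega]^{\omega}$ witness $\textsf{non}^{\ast}(\mathcal{J})$, i.e.\ for every $A\in\mathcal{J}$ there is $H\in\mathcal{H}$ with $A\cap H$ finite. I would take $\mathcal{H}'=\{f^{-1}(H):H\in\mathcal{H}\}$; since $f$ is finite-to-one each $f^{-1}(H)$ is infinite, so $\mathcal{H}'\subseteq[\omega]^{\omega}$ and $|\mathcal{H}'|\leq|\mathcal{H}|=\textsf{non}^{\ast}(\mathcal{J})$. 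Given $A\in\mathcal{I}$, I need some member of $\mathcal{H}'$ meeting $A$ finitely; here is the subtle point and the main obstacle: $A$ need not equal $f^{-1}$ of anything. Instead, set $B=\omega\setminus f[\omega\setminus A]$, so that $f^{-1}(B)\subseteq A$, and observe that $B\in\mathcal{J}$: indeed $f^{-1}(\omega\setminus B)=f^{-1}(f[\omega\setminus A])\supseteq\omega\setminus A$, and if $B\notin\mathcal{J}$ then $\omega\setminus B\in\mathcal{J}^{\ast}\cap$\dots\ — more directly, if $B\in\mathcal{J}$ we are done, so one argues that the Katětov property forces enough of $A$ to be covered. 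The clean route is: pick $H\in\mathcal{H}$ with $B\cap H$ finite; then $f^{-1}(H)\setminus f^{-1}(B)$ is the part of $f^{-1}(H)$ lying outside $A$\dots. I will need to arrange the bookkeeping so that $f^{-1}(H)\cap A$ is finite, which follows once $B\in\mathcal{J}$ is established; and $B\in\mathcal{J}$ holds because $f^{-1}(\omega\setminus B)\supseteq\omega\setminus A\in\mathcal{I}^{\ast}$ is $\mathcal{I}$-positive, so the Katětov property (contrapositive: $\omega\setminus B\notin\mathcal{J}$) gives $B\in\mathcal{J}$. Hence $\mathcal{H}'$ witnesses $\textsf{non}^{\ast}(\mathcal{I})\leq\textsf{non}^{\ast}(\mathcal{J})$. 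I expect the only real care needed is this manipulation turning the ``only on preimages'' limitation of $f$ into a statement about an arbitrary $A\in\mathcal{I}$.
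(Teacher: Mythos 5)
Items (1), (2) and (4) are fine. The problems are in (3) and (5), and both stem from reversing the direction of the witnessing function under this paper's convention: $\mathcal{I}\leq_{\mathsf{K}}\mathcal{J}$ means there is $f$ defined on the space carrying $\mathcal{J}$, with values in the space carrying $\mathcal{I}$, such that $A\in\mathcal{I}$ implies $f^{-1}(A)\in\mathcal{J}$. In (3), to get $\mathcal{I}\leq_{\mathsf{K}}\mathsf{fin}$ you need a map whose preimages of $\mathcal{I}$-sets are \emph{finite}; your criterion ``any choice making preimages of finite sets land inside $\mathcal{I}$'' and your verification establish instead that preimages of $\mathsf{fin}$-sets lie in $\mathcal{I}$, which is exactly the trivial relation $\mathsf{fin}\leq_{\mathsf{K}}\mathcal{I}$ of item (2). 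The correct witness is the enumeration $n\mapsto x_n$ of $X$: for $A\in\mathcal{I}$, $f^{-1}(A)=\{n\mid x_n\in A\}$ is finite because $A\cap X$ is finite. Moreover the converse direction of (3) does not ``come from (2)'': if $f$ witnesses $\mathcal{I}\leq_{\mathsf{K}}\mathsf{fin}$, one must argue that $f[\omega]$ is infinite and meets every $A\in\mathcal{I}$ in a finite set, so $\mathcal{I}$ is not tall. That short argument is missing.

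In (5) the same reversal occurs: the hypothesis $\mathcal{I}\leq_{\mathsf{KB}}\mathcal{J}$ gives a finite-to-one $f$ with $A\in\mathcal{I}\Rightarrow f^{-1}(A)\in\mathcal{J}$, where the witnesses $\mathcal{H}$ for $\mathsf{non}^{*}(\mathcal{J})$ live on the \emph{domain} of $f$; the natural transport is by forward images, $\mathcal{H}'=\{f[H]\mid H\in\mathcal{H}\}$. Each $f[H]$ is infinite ($f$ is finite-to-one), and for $A\in\mathcal{I}$ one picks $H$ with $f^{-1}(A)\cap H$ finite and gets $A\cap f[H]\subseteq f[f^{-1}(A)\cap H]$ finite; there is no need for the set $B=\omega\setminus f[\omega\setminus A]$ at all. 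Your preimage family $\{f^{-1}(H)\}$ belongs to the reversed reading of the order, and the patch does not work: from $f^{-1}(\omega\setminus B)\in\mathcal{I}^{+}$ you may conclude $\omega\setminus B\notin\mathcal{J}$, but that does \emph{not} give $B\in\mathcal{J}$ (ideals are not maximal), so the key claim is unproved. Indeed the approach fails outright: if $f(2n)=f(2n+1)=n$ and $A$ is the set of even numbers, then $B=\emptyset$, yet $A\cap f^{-1}(H)$ is infinite for every infinite $H$, so no bound on $B$ can control $A\cap f^{-1}(H)$. (Also, $f^{-1}(H)$ need not even be infinite, since $H$ may avoid the range of $f$.) So items (3) and (5) need to be redone as indicated.
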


The following equivalence of the Kat\v{e}tov order is often useful:

\begin{lemma}
Let $X,Y$ be two sets, $\mathcal{I}$ an ideal on $X$, $\mathcal{J}$ an ideal
on $Y$ and $f:X\longrightarrow Y.$ The following are equivalent:
\label{EquivalenciaKatetov}

\begin{enumerate}
\item $f$ is a Kat\v{e}tov function from $\mathcal{I}$ to $\mathcal{J}.$

\item For every $A\subseteq X,$ if $A\in\mathcal{I}^{+},$ then $f\left[
A\right]  \in\mathcal{J}^{+}.$
\end{enumerate}
\end{lemma}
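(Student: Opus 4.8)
The statement to prove is the equivalence between being a Kat\v{e}tov function and the condition that positive sets map to positive sets. This is a routine unwinding of definitions, so the proof should be short.

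\begin{proof}
The plan is to prove the contrapositive of each implication, as this is the cleanest way to handle the complementation inherent in passing between an ideal and its positive sets.

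For the implication from (1) to (2): Suppose $f$ is a Kat\v{e}tov function and let $A \subseteq X$ with $A \in \mathcal{I}^{+}$. I want $f[A] \in \mathcal{J}^{+}$, so suppose toward a contradiction that $f[A] \in \mathcal{J}$. Since $f$ is Kat\v{e}tov, $f^{-1}(f[A]) \in \mathcal{I}$. But $A \subseteq f^{-1}(f[A])$, and ideals are closed under subsets, so $A \in \mathcal{I}$, contradicting $A \in \mathcal{I}^{+}$. Hence $f[A] \in \mathcal{J}^{+}$.

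For the implication from (2) to (1): Assume (2) holds and let $B \subseteq Y$ with $B \in \mathcal{J}$. I want $f^{-1}(B) \in \mathcal{I}$, so suppose toward a contradiction that $f^{-1}(B) \in \mathcal{I}^{+}$. Applying (2) to $A = f^{-1}(B)$ gives $f[f^{-1}(B)] \in \mathcal{J}^{+}$. But $f[f^{-1}(B)] \subseteq B \in \mathcal{J}$, so $f[f^{-1}(B)] \in \mathcal{J}$, contradicting that it is $\mathcal{J}$-positive. Hence $f^{-1}(B) \in \mathcal{I}$, so $f$ is a Kat\v{e}tov function.

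There is no real obstacle here; the only thing to be careful about is the direction of the containments $A \subseteq f^{-1}(f[A])$ and $f[f^{-1}(B)] \subseteq B$, together with the downward closure of ideals under inclusion. (One subtlety worth noting in passing: in some treatments one requires Kat\v{e}tov maps to satisfy that preimages of singletons are $\mathcal{I}$-positive or the like, but under the definition given in the excerpt — which imposes no such condition — the equivalence is exactly as stated and the argument above is complete.)
\end{proof}
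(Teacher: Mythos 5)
Your proof is correct: both implications are handled by the standard containments $A \subseteq f^{-1}(f[A])$ and $f[f^{-1}(B)] \subseteq B$ together with downward closure of ideals, which is exactly the intended (and essentially only) argument — the paper itself states this lemma without proof as a routine fact. Nothing further is needed.
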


\qquad\qquad\qquad\qquad\ \ \ 

We have the following:

\begin{lemma}
\qquad\ \ \ \qquad\qquad\ \ \ 

\begin{enumerate}
\item $\mathcal{ED}$ $\leq_{\text{\textsf{KB}}}$ \textsf{fin}$\times
$\textsf{fin }and $\mathcal{ED}$ $\leq_{\text{\textsf{KB}}}\mathcal{ED}%
_{\text{\textsf{fin}}}.$

\item $\mathcal{ED}$ and \textsf{nwd }are Kat\v{e}tov incomparable.

\item \textsf{fin}$\times$\textsf{fin, }$\mathcal{ED}_{\text{\textsf{fin}}}$
and \textsf{nwd }are Kat\v{e}tov incomparable.
\end{enumerate}
\end{lemma}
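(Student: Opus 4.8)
For (1) the plan is just to quote Lemma~\ref{Lema Katetov basico}. Each column $C_n$ lies in \textsf{fin}$\times$\textsf{fin}, and the graph of a function $h\colon\omega\to\omega$ is contained in $D(h)$, hence also lies in \textsf{fin}$\times$\textsf{fin}; since these are the generators of $\mathcal{ED}$, we get $\mathcal{ED}\subseteq$\textsf{fin}$\times$\textsf{fin}, and item (1) of Lemma~\ref{Lema Katetov basico} gives $\mathcal{ED}\leq_{\text{\textsf{KB}}}$\textsf{fin}$\times$\textsf{fin}. For the second assertion, $|\triangle\cap C_n|=n+1$ for every $n$, so $\limsup_n|\triangle\cap C_n|=\infty$, i.e.\ $\triangle\in\mathcal{ED}^{+}$, and item (4) of the same lemma gives $\mathcal{ED}\leq_{\text{\textsf{KB}}}\mathcal{ED}\upharpoonright\triangle=\mathcal{ED}_{\text{\textsf{fin}}}$.

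For (2) and (3), I would first trim the list of non-reductions to be checked using transitivity of $\leq_{\text{\textsf{K}}}$, part (1), and item (4) of Lemma~\ref{Lema Katetov basico} (which, via transitivity, yields $\mathcal J\leq_{\text{\textsf{K}}}\mathcal I\upharpoonright X$ whenever $\mathcal J\leq_{\text{\textsf{K}}}\mathcal I$ and $X\in\mathcal I^{+}$). Since $\mathcal{ED}\leq_{\text{\textsf{K}}}$\textsf{fin}$\times$\textsf{fin} and $\mathcal{ED}\leq_{\text{\textsf{K}}}\mathcal{ED}_{\text{\textsf{fin}}}$, the single non-reduction $\mathcal{ED}\not\leq_{\text{\textsf{K}}}$\textsf{nwd} already gives \textsf{fin}$\times$\textsf{fin}$\not\leq_{\text{\textsf{K}}}$\textsf{nwd} and $\mathcal{ED}_{\text{\textsf{fin}}}\not\leq_{\text{\textsf{K}}}$\textsf{nwd}; and $\mathcal{ED}\leq_{\text{\textsf{K}}}\mathcal{ED}_{\text{\textsf{fin}}}$ with $\triangle\in\mathcal{ED}^{+}$ shows that \textsf{nwd}$\not\leq_{\text{\textsf{K}}}\mathcal{ED}_{\text{\textsf{fin}}}$ implies \textsf{nwd}$\not\leq_{\text{\textsf{K}}}\mathcal{ED}$. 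So only five non-reductions remain: $\mathcal{ED}\not\leq_{\text{\textsf{K}}}$\textsf{nwd}; \textsf{fin}$\times$\textsf{fin}$\not\leq_{\text{\textsf{K}}}\mathcal{ED}_{\text{\textsf{fin}}}$; $\mathcal{ED}_{\text{\textsf{fin}}}\not\leq_{\text{\textsf{K}}}$\textsf{fin}$\times$\textsf{fin}; \textsf{nwd}$\not\leq_{\text{\textsf{K}}}$\textsf{fin}$\times$\textsf{fin}; \textsf{nwd}$\not\leq_{\text{\textsf{K}}}\mathcal{ED}_{\text{\textsf{fin}}}$.

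The two I would prove directly are the first two. For $\mathcal{ED}\not\leq_{\text{\textsf{K}}}$\textsf{nwd}: if $f\colon\mathbb{Q}\to\omega^{2}$ were Kat\v{e}tov from \textsf{nwd} to $\mathcal{ED}$, then every $f^{-1}(C_j)$ is nowhere dense; fixing an enumeration $(J_k)_{k\in\omega}$ of the nonempty basic open subsets of $\mathbb{Q}$, each $J_k$ must meet infinitely many of the $f^{-1}(C_j)$ (else it would be a finite union of nowhere dense sets, yet it is nonempty open), so one recursively picks $j_k\notin\{j_0,\dots,j_{k-1}\}$ and $x_k\in J_k\cap f^{-1}(C_{j_k})$; then $A=\{x_k:k\in\omega\}$ is dense, so $A\in$\textsf{nwd}$^{+}$, while $f[A]$ meets every column in at most one point, so $f[A]\in\mathcal{ED}$, contradicting Lemma~\ref{EquivalenciaKatetov}. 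For \textsf{fin}$\times$\textsf{fin}$\not\leq_{\text{\textsf{K}}}\mathcal{ED}_{\text{\textsf{fin}}}$: if $f\colon\triangle\to\omega^{2}$ were such a map, then for each $k$ the set $C_0\cup\dots\cup C_{k-1}$ lies in \textsf{fin}$\times$\textsf{fin}, so $f^{-1}(C_0\cup\dots\cup C_{k-1})\in\mathcal{ED}_{\text{\textsf{fin}}}$ and its slices are bounded by some $L_k$ above some level; hence for $n$ large the $n$-th slice of $f^{-1}(\bigcup_{j\geq k}C_j)$ has at least $n+1-L_k\geq k+1$ points. Taking such slices at distinct indices $n_0<n_1<\cdots$, letting $A_k$ be that slice intersected with $f^{-1}(\bigcup_{j\geq k}C_j)$, and $A=\bigcup_k A_k$, one gets $\limsup_n|A\cap\triangle_n|=\infty$, so $A\in\mathcal{ED}_{\text{\textsf{fin}}}^{+}$, whereas $f[A]\cap C_j$ is finite for every $j$ (only $A_0,\dots,A_j$ can touch $C_j$), so $f[A]\in$\textsf{fin}$\times$\textsf{fin}, again contradicting Lemma~\ref{EquivalenciaKatetov}.

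The remaining three non-reductions — $\mathcal{ED}_{\text{\textsf{fin}}}\not\leq_{\text{\textsf{K}}}$\textsf{fin}$\times$\textsf{fin}, \textsf{nwd}$\not\leq_{\text{\textsf{K}}}$\textsf{fin}$\times$\textsf{fin}, \textsf{nwd}$\not\leq_{\text{\textsf{K}}}\mathcal{ED}_{\text{\textsf{fin}}}$ — are where the actual difficulty lies, and I expect this to be the main obstacle. The ``exhibit a positive set whose $f$-image lands in the smaller ideal'' trick used above is unavailable in principle here: if $f$ is Kat\v{e}tov from $\mathcal I$ to $\mathcal J$ then $A\in\mathcal I^{+}$ already forces $f[A]\in\mathcal J^{+}$, so no such $A$ exists — in the two cases above one only got away with it by feeding the construction a partial consequence of Kat\v{e}tov-ness ($f^{-1}$ of one column, resp.\ of finitely many columns) and then violating a different consequence, and for these three ideals the partial consequences do not combine so directly. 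Instead one argues globally — covering the target space by a $\subseteq$-increasing $\omega$-sequence of sets of the target ideal, pulling it back along $f$, and deriving a contradiction from the way the slices (resp.\ columns) of the pieces must exhaust each slice (resp.\ column) of the domain — or one invokes the known facts that \textsf{fin}$\times$\textsf{fin} and $\mathcal{ED}_{\text{\textsf{fin}}}$ possess, while \textsf{nwd} lacks, the relevant weak-selectivity/$Q^{+}$-type behaviour. Since all of this is part of the well-understood Kat\v{e}tov order on Borel ideals, I would simply cite it (e.g.\ \cite{KatetovOrderonBorelIdeals} and \cite{CombinatoricsofFiltersandIdeals}) rather than reprove it.
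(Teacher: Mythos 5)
Your proposal is correct, and it is in fact more detailed than what the paper does: the paper states this lemma with no proof at all, as standard background facts about the critical ideals, implicitly deferring to the surveys cited at the start of the preliminaries section. Your derivation of (1) is exactly the intended one (the generators of $\mathcal{ED}$ lie in \textsf{fin}$\times$\textsf{fin}, so inclusion gives the first reduction via item (1) of Lemma \ref{Lema Katetov basico}; and $\triangle\in\mathcal{ED}^{+}$ plus item (4) gives the second, since $\mathcal{ED}_{\text{\textsf{fin}}}=\mathcal{ED}\upharpoonright\triangle$ by definition). The transitivity trimming to five non-reductions is sound, and both direct arguments check out: for $\mathcal{ED}\nleq_{\text{\textsf{K}}}$\textsf{nwd}, the preimages of the columns cover $\mathbb{Q}$, so each basic open set meets infinitely many of them, and the dense set you build maps to a partial selector of the columns, which is in $\mathcal{ED}$; for \textsf{fin}$\times$\textsf{fin}$\nleq_{\text{\textsf{K}}}\mathcal{ED}_{\text{\textsf{fin}}}$, the eventual bound $L_k$ on the slices of $f^{-1}(C_0\cup\dots\cup C_{k-1})$ and the diagonal choice of slices $\triangle_{n_k}$ indeed produce $A\in\mathcal{ED}_{\text{\textsf{fin}}}^{+}$ with all column traces of $f[A]$ finite, hence $f[A]\in$\textsf{fin}$\times$\textsf{fin} (your implicit uses of the standard positivity criteria --- $X\in\mathcal{ED}^{+}$ iff $\limsup_n|X\cap C_n|=\infty$, similarly for $\mathcal{ED}_{\text{\textsf{fin}}}$, and ``all columns finite implies membership in \textsf{fin}$\times$\textsf{fin}'' --- are correct). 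Citing \cite{KatetovOrderonBorelIdeals} and \cite{CombinatoricsofFiltersandIdeals} for the remaining three non-reductions ($\mathcal{ED}_{\text{\textsf{fin}}}\nleq_{\text{\textsf{K}}}$\textsf{fin}$\times$\textsf{fin}, \textsf{nwd}$\nleq_{\text{\textsf{K}}}$\textsf{fin}$\times$\textsf{fin}, \textsf{nwd}$\nleq_{\text{\textsf{K}}}\mathcal{ED}_{\text{\textsf{fin}}}$) is legitimate --- these are standard facts and the paper itself proves none of the lemma --- so what your approach buys is self-containedness for the easy half, at the cost of still leaning on the literature where the genuinely harder arguments live.
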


There are simple combinatorial characterizations of the Kat\v{e}tov order when
one of the ideals is one of those we defined earlier.

\begin{center}%
\begin{tabular}
[c]{|c|c|c|}\hline
\textbf{Kat\v{e}tov relation} & \textbf{Equivalent to} & \textbf{Reference}%
\\\hline
&  & \\
$\mathcal{I}$ $\leq_{\text{\textsf{K}}}$ \textsf{fin} & $\mathcal{I}$ is not
tall & \cite{OrderingMADFamiliesalaKatetov}\\
&  & \\
$\mathcal{I\nleq}_{\text{\textsf{K}}}$ \textsf{nwd} & $\mathcal{I}$ is
$\mathbb{C}$ indestructible & \cite{OrderingMADFamiliesalaKatetov},
\cite{KurilicMAD}\\
&  & \\
$\mathcal{ED\nleq}_{\text{\textsf{K}}}$ $\mathcal{I}$ & Every partition of
$\omega$ into & \cite{CombinatoricsofFiltersandIdeals}\\
& sets in $\mathcal{I}$ has a selector in $\mathcal{I}^{+}$ & \\
&  & \\
$\mathcal{ED\nleq}_{\text{\textsf{K}}}$ $\mathcal{I}\upharpoonright X$ &
$\mathcal{I}$ is weakly selective & \cite{CombinatoricsofFiltersandIdeals}\\
for all $X\in\mathcal{I}^{+}$ &  & \\
&  & \\
$\mathcal{ED}_{\text{\textsf{fin}}}$ $\mathcal{\nleq}_{\text{\textsf{KB}}}$
$\mathcal{I}$ & Every partition of $\omega$ in finite &
\cite{RamseyTypePropertiesofIdeals}\\
& pieces has a selector in $\mathcal{I}^{+}$ & \\
&  & \\
$\mathcal{ED}_{\text{\textsf{fin}}}$ $\mathcal{\nleq}_{\text{\textsf{KB}}}$
$\mathcal{I}\upharpoonright X$ & $\mathcal{I}$ is \textsf{Q}$^{+}$ &
\cite{RamseyTypePropertiesofIdeals}\\
for all $X\in\mathcal{I}^{+}$ &  & \\
&  & \\
\textsf{fin}$\times$\textsf{fin }$\mathcal{\nleq}_{\text{\textsf{K}}}$
$\mathcal{I}$ & Every countable subfamily of $\mathcal{I}^{\ast}$ &
\cite{RamseyTypePropertiesofIdeals}\\
& has a pseudointersection in $\mathcal{I}^{+}$ & \\
&  & \\
\textsf{fin}$\times$\textsf{fin}$\mathcal{\ \mathcal{\nleq}_{\text{\textsf{K}%
}}I}\upharpoonright X$ & $\mathcal{I}$ is \textsf{P}$^{-}$ &
\cite{RamseyTypePropertiesofIdeals}\\
for all $X\in\mathcal{I}^{+}.$ &  & \\\hline
\end{tabular}
\smallskip

\textsf{Table 2. Combinatorial properties and the Kat\v{e}tov order}
\end{center}

\qquad\qquad\qquad\ \ \ 

The ideals $\mathcal{ED},$ \textsf{fin}$\times$\textsf{fin }and $\mathcal{ED}%
_{\text{\textsf{fin}}}$ are related in the following way:

\begin{proposition}
Let $\mathcal{I}$ be an ideal on $\omega$ such that $\mathcal{ED}$
$\leq_{\text{\textsf{K}}}$ $\mathcal{I}.$ Either \textsf{fin}$\times
$\textsf{fin }$\leq_{\text{\textsf{K}}}\mathcal{I}$ or there is $X\in
\mathcal{I}^{+}$ such that $\mathcal{ED}_{\text{\textsf{fin}}}$ $\leq
_{\text{\textsf{K}}}$ $\mathcal{I}\upharpoonright X.$
\end{proposition}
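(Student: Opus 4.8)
The strategy is to use a Kat\v{e}tov function $g$ witnessing $\mathcal{ED}\leq_{\text{\textsf{K}}}\mathcal{I}$ and analyze the preimages of the columns $C_n$ of $\omega^2$. Recall that $\mathcal{ED}$ lives on $\omega\times\omega$, is generated by the columns $\mathcal{C}=\{C_n\}$ together with graphs of functions, and that a set $Y\subseteq\omega^2$ is $\mathcal{ED}$-positive iff for infinitely many $n$ the vertical section $Y\cap C_n$ is infinite, OR (after removing finitely many columns) $Y$ still contains a graph-like ``large'' piece; the operative point is that $\mathcal{ED}\upharpoonright\triangle=\mathcal{ED}_{\text{\textsf{fin}}}$ while $\mathcal{ED}$ restricted to the complement of $\triangle$ in each column is essentially \textsf{fin}$\times$\textsf{fin} after reindexing. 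So I would split $\omega^2=\triangle\cup(\omega^2\setminus\triangle)$ and push this dichotomy through $g$.

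Concretely: let $g:\omega\to\omega^2$ be a Kat\v{e}tov function from $\mathcal{I}$ to $\mathcal{ED}$, and set $A=g^{-1}(\triangle)$ and $B=g^{-1}(\omega^2\setminus\triangle)$, so $\omega=A\cup B$ and hence at least one of $A,B$ is in $\mathcal{I}^+$. First suppose $A\in\mathcal{I}^+$. Then $g\upharpoonright A$ maps $A$ into $\triangle$, and by Lemma~\ref{EquivalenciaKatetov} it is a Kat\v{e}tov function from $\mathcal{I}\upharpoonright A$ to $\mathcal{ED}\upharpoonright\triangle=\mathcal{ED}_{\text{\textsf{fin}}}$; this already gives $\mathcal{ED}_{\text{\textsf{fin}}}\leq_{\text{\textsf{K}}}\mathcal{I}\upharpoonright A$, so we may take $X=A$ and the second alternative holds. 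Now suppose $B\in\mathcal{I}^+$. The set $\omega^2\setminus\triangle$, with each column $C_n$ replaced by its upper part $C_n\setminus\triangle$, is (via the obvious reindexing bijection $h$ that collapses $C_n\setminus\triangle$ onto a full copy of $\omega$ keeping columns) order-isomorphic to $\omega^2$ in a way that sends $\mathcal{ED}\upharpoonright(\omega^2\setminus\triangle)$ into \textsf{fin}$\times$\textsf{fin}: indeed a subset of $\omega^2\setminus\triangle$ is in $\mathcal{ED}$ iff it meets all but finitely many columns in a finite set (graphs of functions contribute only finitely many points per column, and there are only finitely many ``small'' columns where the finite truncation is nontrivial), which is exactly the \textsf{fin}$\times$\textsf{fin} condition after applying $h$. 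Hence $h\circ(g\upharpoonright B):B\to\omega^2$ is a Kat\v{e}tov function from $\mathcal{I}\upharpoonright B$ to \textsf{fin}$\times$\textsf{fin}, and composing with $\mathcal{I}\leq_{\text{\textsf{KB}}}\mathcal{I}\upharpoonright B$ (Lemma~\ref{Lema Katetov basico}(4)) yields \textsf{fin}$\times$\textsf{fin}$\leq_{\text{\textsf{K}}}\mathcal{I}$, the first alternative.

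The main obstacle is getting the identification of $\mathcal{ED}\upharpoonright(\omega^2\setminus\triangle)$ with \textsf{fin}$\times$\textsf{fin} exactly right: one must check that the ideal induced on $\omega^2\setminus\triangle$ by $\mathcal{ED}$ is generated by the columns $C_n\setminus\triangle$ together with sets meeting each such column finitely, i.e.\ that graphs of functions do not add anything genuinely new once restricted above the diagonal. This is true because a graph $\{(n,f(n)):n\in\omega\}$ meets each column in at most one point, so its trace on $\omega^2\setminus\triangle$ is a partial selector and is already dominated (mod finite per column) by the \textsf{fin}$\times$\textsf{fin} generators; conversely every \textsf{fin}$\times$\textsf{fin} generator $D(f)$ restricted above the diagonal is a finite union of (truncated) columns union a set that is finite in each remaining column, hence in $\mathcal{ED}$. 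With the reindexing $h$ chosen to preserve the column structure, both inclusions $h[\mathcal{ED}\upharpoonright(\omega^2\setminus\triangle)]\subseteq$ \textsf{fin}$\times$\textsf{fin} and $h^{-1}[\textsf{fin}\times\textsf{fin}]\subseteq\mathcal{ED}\upharpoonright(\omega^2\setminus\triangle)$ hold, so $h$ is in fact an isomorphism of ideals — but for the proof we only need the first inclusion, which is the easier direction. I would state the column analysis as a small self-contained claim and then the two-case argument above is routine.
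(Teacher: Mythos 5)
Your Case 1 is fine, but Case 2 contains a genuine error, and it is located exactly at the step you wave off. To make $h\circ(g\upharpoonright B)$ a Kat\v{e}tov function into \textsf{fin}$\times$\textsf{fin} you must check that preimages of \textsf{fin}$\times$\textsf{fin}-sets land in $\mathcal{I}\upharpoonright B$, and since the only thing you know about $g$ is that preimages of $\mathcal{ED}$-sets are in $\mathcal{I}$, what you actually need is $h^{-1}[\textsf{fin}\times\textsf{fin}]\subseteq\mathcal{ED}\upharpoonright(\omega^{2}\setminus\triangle)$ --- your ``second inclusion'', not the first. That inclusion is false. A set $E\subseteq\omega^{2}$ belongs to \textsf{fin}$\times$\textsf{fin} as soon as all but finitely many of its columns are finite, whereas membership in $\mathcal{ED}$ requires the column sections to be of \emph{uniformly} bounded size off finitely many columns (finitely many graphs meet each column in a uniformly bounded number of points). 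So if $E$ meets the $n$-th column in exactly $n$ points, then $E\in\textsf{fin}\times\textsf{fin}$ but $h^{-1}(E)\notin\mathcal{ED}$. In fact $\mathcal{ED}\upharpoonright(\omega^{2}\setminus\triangle)$ is isomorphic to $\mathcal{ED}$ itself (a column-preserving bijection sends its generators, truncated columns and partial selectors, to columns and graphs), and \textsf{fin}$\times$\textsf{fin}$\nleq_{\text{\textsf{K}}}\mathcal{ED}$: by the characterization in Table 2 this amounts to the fact that every countable subfamily of $\mathcal{ED}^{\ast}$ has a pseudointersection in $\mathcal{ED}^{+}$, which one gets by a straightforward diagonalization picking $n$ points in the $n$-th column avoiding the relevant finitely many $\mathcal{ED}$-sets. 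Consequently the strategy itself, not just the write-up, fails: taking $\mathcal{I}=\mathcal{ED}$ and $g$ an isomorphism of $\mathcal{ED}$ with $\mathcal{ED}\upharpoonright(\omega^{2}\setminus\triangle)$ puts you entirely in your Case 2, where your argument would ``prove'' \textsf{fin}$\times$\textsf{fin}$\leq_{\text{\textsf{K}}}\mathcal{ED}$, which is false. The dichotomy in the proposition cannot be decided by where a single Kat\v{e}tov witness sends positive mass relative to $\triangle$; it depends on a further property of $\mathcal{I}$.

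For comparison, the paper's proof runs through the combinatorial characterizations of Table 2: from $\mathcal{ED}\leq_{\text{\textsf{K}}}\mathcal{I}$ one extracts a partition $\mathcal{P}=\{P_{n}\mid n\in\omega\}\subseteq\mathcal{I}$ of $\omega$ all of whose selectors lie in $\mathcal{I}$, and then asks whether the family $\{\omega\setminus P_{n}\mid n\in\omega\}\subseteq\mathcal{I}^{\ast}$ has a pseudointersection in $\mathcal{I}^{+}$. If it does not, the \textsf{fin}$\times$\textsf{fin} characterization gives \textsf{fin}$\times$\textsf{fin}$\leq_{\text{\textsf{K}}}\mathcal{I}$; if $X\in\mathcal{I}^{+}$ is such a pseudointersection, then $X$ meets each $P_{n}$ finitely, so on $X$ the partition becomes (essentially) a partition into finite sets with no positive selector, yielding $\mathcal{ED}_{\text{\textsf{fin}}}\leq_{\text{\textsf{K}}}\mathcal{I}\upharpoonright X$. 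That pseudointersection question is the genuine point of bifurcation, and it is the ingredient missing from your argument.
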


\begin{proof}
We use the equivalences from Table 2. Since $\mathcal{ED}$ $\leq
_{\text{\textsf{K}}}$ $\mathcal{I},$ there is a partition $\mathcal{P=}%
\left\{  P_{n}\mid n\in\omega\right\}  \subseteq\mathcal{I}$ such that every
selector of it is in $\mathcal{I}.$ We now wonder if the family $\left\{
\omega\setminus P_{n}\mid n\in\omega\right\}  \subseteq\mathcal{I}^{\ast}$ has
a pseudointersection in $\mathcal{I}^{+}.$ If not, we get that \textsf{fin}%
$\times$\textsf{fin }$\leq_{\text{\textsf{K}}}\mathcal{I}.$ In case there is
$X\in\mathcal{I}^{+}$ a pseudointersection, we conclude that $\mathcal{ED}%
_{\text{\textsf{fin}}}$ $\leq_{\text{\textsf{K}}}$ $\mathcal{I}\upharpoonright
X.$
\end{proof}

\qquad\qquad\qquad

In this way, the Category Dichotomy is equivalent to a trichotomy: Every Borel
ideal is either below \textsf{nwd}, or has a restriction above \textsf{fin}%
$\times$\textsf{fin }or has a restriction above $\mathcal{ED}%
_{\text{\textsf{fin}}}$. This is how the dichotomy was proved in
\cite{KatetovOrderonBorelIdeals}.

\qquad\qquad\qquad

We now look at some variants of $\mathcal{ED}_{\text{\textsf{fin}}}.$ We will
say an interval partition $P=\left\{  P_{n}\mid n\in\omega\right\}  $ is
\emph{increasing }if $\left\vert P_{n}\right\vert <\left\vert P_{n+1}%
\right\vert $ for every $n\in\omega.$

\begin{definition}
Let $P=\left\{  P_{n}\mid n\in\omega\right\}  $ be an increasing partition.
Define the ideal $\mathcal{ED}_{P}$ as the ideal generated by all selectors of
$P.$
\end{definition}

\qquad\ \ \ 

Note that $\mathcal{ED}_{\text{\textsf{fin}}}$ is of this form. It is easy to
see that all these ideals are Kat\v{e}tov-Blass equivalent; however even more
is true, as we will now see.

\begin{definition}
Let $\mathcal{I}$ and $\mathcal{J}$ be two ideals on $\omega.$ We say that
$\mathcal{I}$ \emph{and} $\mathcal{J}$ \emph{are isomorphic }if there is a
bijection $f\in\omega^{\omega}$ that is a Rudin-Keisler function from $\left(
\omega,\mathcal{I}\right)  $ to $\left(  \omega,\mathcal{J}\right)  .$
\end{definition}

\qquad\ \ \ 

The definition of isomorphism between ideals found in the book
\cite{IliasBook} is more general than the one presented here. However, for
tall ideals, this notion is equivalent to the one from the book, which is the
case of interest in this work.

\begin{proposition}
Let $P$ and $R$ be two increasing interval partitions. The ideals
$\mathcal{ED}_{P}$ and $\mathcal{ED}_{R}$ are isomorphic.\qquad\qquad
\end{proposition}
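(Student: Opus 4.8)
The plan is to construct an explicit bijection $f\in\omega^\omega$ witnessing the isomorphism. Write $P=\{P_n\mid n\in\omega\}$ and $R=\{R_n\mid n\in\omega\}$ as increasing interval partitions, so $|P_0|<|P_1|<\cdots$ and $|R_0|<|R_1|<\cdots$. The natural move is to match up the blocks one-for-one: let $f$ map $P_n$ bijectively onto $R_n$ for each $n$ (for instance, order-preservingly, though any bijection will do). Since the $P_n$ partition $\omega$ and the $R_n$ partition $\omega$, this glues together to a bijection $f:\omega\to\omega$. The task is then to check that $A\in\mathcal{ED}_P \iff f[A]\in\mathcal{ED}_R$ — equivalently, by Lemma \ref{EquivalenciaKatetov} applied in both directions (using that $f$ is a bijection), that $f$ and $f^{-1}$ are both Kat\v{e}tov functions — and since $f$ is finite-to-one this also gives the Rudin-Blass strengthening for free.

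The key combinatorial point is a clean description of $\mathcal{ED}_P$: a set $A\subseteq\omega$ lies in $\mathcal{ED}_P$ if and only if $A$ is contained in a finite union of selectors of $P$, which happens if and only if $\sup_n |A\cap P_n|<\infty$, i.e. there is $k$ with $|A\cap P_n|\le k$ for all $n$. Indeed, if $|A\cap P_n|\le k$ for all $n$ then $A$ is covered by $k$ selectors (split $A\cap P_n$ into $\le k$ singletons and distribute), and conversely a union of $k$ selectors meets each $P_n$ in at most $k$ points; this last requires the partition to be \emph{increasing} only in that it guarantees tallness/properness, but the boundedness characterization itself is immediate. Granting this, the proof is essentially a triviality: $|A\cap P_n| = |f[A]\cap R_n|$ for every $n$ because $f\restriction P_n$ is a bijection of $P_n$ onto $R_n$, so $\sup_n|A\cap P_n|<\infty$ if and only if $\sup_n|f[A]\cap R_n|<\infty$, i.e. $A\in\mathcal{ED}_P$ iff $f[A]\in\mathcal{ED}_R$.

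So the steps, in order, are: (1) establish the boundedness characterization $A\in\mathcal{ED}_P\iff\sup_n|A\cap P_n|<\infty$ (and note it does not actually depend on the interval structure or the increasing hypothesis — these are only needed so that $\mathcal{ED}_P$ is a proper, tall ideal, which is implicit in the statement); (2) define $f$ blockwise as a bijection $P_n\to R_n$, glued into a bijection of $\omega$; (3) observe $|A\cap P_n|=|f[A]\cap R_n|$ and conclude $A\in\mathcal{ED}_P\iff f[A]\in\mathcal{ED}_R$, so $f$ is a Rudin-Keisler (indeed Rudin-Blass) bijection and the ideals are isomorphic.

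Honestly, there is no real obstacle here; the only thing to be careful about is the direction of the Kat\v{e}tov/Rudin-Keisler condition — the condition is phrased in terms of $f^{-1}$ of sets, so one should double-check that, with $f$ a bijection matching $P_n$ to $R_n$, one genuinely gets $f^{-1}(B)\cap P_n$ in bijection with $B\cap R_n$, which is automatic. If one wanted $f$ to additionally respect some extra structure (e.g. be the unique order-preserving block map), that is available but unnecessary for isomorphism of the ideals.
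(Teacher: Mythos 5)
Your step (2) does not go through: there is in general no bijection of $P_n$ onto $R_n$, because the two increasing interval partitions need not have matching block sizes. For instance, if $|P_n|=n+1$ and $|R_n|=2^n$, then no blockwise map $P_n\to R_n$ is a bijection, and no re-indexing $P_n\to R_{\sigma(n)}$ helps unless the size sequences happen to coincide as multisets. Since the whole isomorphism in your argument is carried by this blockwise matching (it is what gives $|A\cap P_n|=|f[A]\cap R_n|$), the proof collapses at its central point. Note also that the cheapest repair, taking $f$ to be the identity (the unique order-preserving bijection), fails: a selector of $P$ can meet a single block $R_m$ in as many points as there are $P$-blocks inside $R_m$, which is unbounded, so it lies in $\mathcal{ED}_P$ but not in $\mathcal{ED}_R$.

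Your step (1), the characterization $A\in\mathcal{ED}_P\iff\sup_n|A\cap P_n|<\infty$, and the final verification scheme are fine, and they are essentially what the paper also uses implicitly. But precisely because an exact block-for-block matching is impossible, the paper asks only for a weaker property of the bijection $g$: every $g[P_n]$ is covered by at most two intervals of $R$ and every $g^{-1}(R_m)$ by at most two intervals of $P$; this still gives $|g[A]\cap R_m|\le 2k$ whenever $|A\cap P_n|\le k$ for all $n$, and symmetrically for $g^{-1}$. Such a $g$ is then built by finite approximations (a back-and-forth argument, phrased via a countable poset and the Rasiowa--Sikorski Lemma), where the increasing hypothesis is used to find, at each stage, an untouched interval of the other partition large enough to absorb the block currently being completed. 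So the missing idea in your proposal is exactly this relaxation from ``blocks map onto blocks'' to ``blocks map into at most two blocks'' together with a genuine back-and-forth construction; without it there is no bijection to run your computation on.
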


\begin{proof}
Write $P=\left\{  P_{n}\mid n\in\omega\right\}  $ and $R=\left\{  R_{n}\mid
n\in\omega\right\}  .$ For $s\in\left[  \omega\right]  ^{<\omega},$ denote
\textsf{cov}$_{P}\left(  s\right)  $ as the least number of intervals from $P$
that cover $s.$ The number \textsf{cov}$_{R}\left(  s\right)  $ is defined in
the same way. To prove the proposition, it is enough to find a function
$g\in\omega^{\omega}$ with the following properties:

\begin{enumerate}
\item $g$ is bijective.

\item For every $n\in\omega,$ we have that \textsf{cov}$_{R}\left(  g\left[
P_{n}\right]  \right)  \leq2.$

\item For every $m\in\omega,$ we have that \textsf{cov}$_{P}\left(
g^{-1}\left(  R_{m}\right)  \right)  \leq2.$
\end{enumerate}

\qquad\qquad\qquad\qquad\ \ 

Indeed, $g$ will be the desired isomorphism since the image of every selector
of $P$ will be covered by at most two selectors of $R$, and the other way
around. We will find $g$ by finite approximations. Define $\mathbb{P}$ as the
set of all $q$ such that for every $n\in\omega,$ the following conditions hold:

\begin{enumerate}
\item $g$ is a partial injective function from $\omega$ to $\omega$ with
finite domain.

\item If $P_{n}\subseteq$ \textsf{dom}$\left(  g\right)  ,$ then
\textsf{cov}$_{R}\left(  q\left[  P_{n}\right]  \right)  \leq2.$

\item If $R_{n}\subseteq$ \textsf{im}$\left(  g\right)  ,$ then \textsf{cov}%
$_{P}\left(  q^{-1}\left(  R_{n}\right)  \right)  \leq2.$

\item If $P_{n}$ $\nsubseteq$ \textsf{dom}$\left(  g\right)  ,$ then
\textsf{cov}$_{R}\left(  q\left[  P_{n}\right]  \right)  \leq1.$

\item If $R_{n}\nsubseteq$ \textsf{im}$\left(  g\right)  ,$ then
\textsf{cov}$_{P}\left(  q^{-1}\left(  R_{n}\right)  \right)  \leq1.$
\end{enumerate}

\qquad\qquad\qquad\ 

We order $\mathbb{P}$ by reverse inclusion. Letting $n\in\omega,$ define
$D_{n}=\{q\in\mathbb{P\mid}P_{n}\subseteq\mathsf{\ }$\textsf{dom}$\left(
q\right)  \}$ and $E_{n}=\{q\in\mathbb{P\mid}R_{n}\subseteq\mathsf{\ }%
$\textsf{im}$\left(  q\right)  \}.$ We claim that both sets are dense in
$\mathbb{P}.$ We verify it for $D_{n}.$ Pick $q\in\mathbb{P}$ such that
$P_{n}$ is not contained in the domain of $q.$ Choose $m$ large enough such
that $\left\vert P_{n}\right\vert \leq\left\vert R_{m}\right\vert $ and
$R_{m}\cap$ \textsf{im}$\left(  q\right)  =\emptyset.$ In case $P_{n}\cap$
\textsf{dom}$\left(  q\right)  =\emptyset,$ extend $q$ such that it maps
$P_{n}$ into $R_{m}.$ In the other case, extend it such that it sends
$P_{n}\setminus$ \textsf{dom}$\left(  q\right)  $ into $R_{m}.$

\qquad\qquad\qquad

Since we only have countably many dense sets, a trivial application of the
Rasiowa-Sikorski Lemma (see \cite{Kunen} or \cite{Jech}) yields the desired isomorphism.
\end{proof}

\qquad\ \ \ \ \ \ \ 

Meager ideals have very strong combinatorial properties, as we will now review.

\begin{definition}
Let $\mathcal{I}$ be an ideal on $\omega$ and $P=\left\{  P_{n}\mid n\in
\omega\right\}  $ a partition of $\omega$ into finite intervals. We say that
$P$ is a \emph{Talagrand partition for }$\mathcal{I}$ if for every
$X\subseteq\omega,$ if $X$ contains infinitely many elements of $P,$ then
$X\in\mathcal{I}^{+}.$
\end{definition}

\qquad\ \ \ \ \ 

The following is a classical theorem in the theory of ideals. The reader may
consult \cite{Barty} for a proof:

\begin{theorem}
[Talagrand, Jalali-Naini]Let $\mathcal{I}$ be an ideal on $\omega.$ The
following are equivalent: \label{Teorema Talagrand}

\begin{enumerate}
\item $\mathcal{I}$ is meager.

\item $\mathcal{I}$ has the Baire property.

\item $\mathcal{I}$ has a Talagrand partition.
\end{enumerate}
\end{theorem}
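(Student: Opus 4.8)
The statement to prove is the Talagrand--Jalali-Naini theorem, characterizing meager ideals. Let me sketch a proof plan.

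\textbf{Proof plan.} The plan is to prove the cycle $(1)\Rightarrow(2)\Rightarrow(3)\Rightarrow(1)$, though in fact $(1)\Leftrightarrow(2)$ is quick and the real content is the equivalence with $(3)$.

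For $(1)\Rightarrow(2)$: every meager set has the Baire property, so this is immediate.

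For $(2)\Rightarrow(1)$: Suppose $\mathcal{I}$ has the Baire property but is not meager. Since $\mathcal{I}$ is non-meager with the Baire property, it is comeager on some basic clopen set $\langle s\rangle$ (an interval condition deciding finitely many coordinates). Using the fact that $\mathcal{I}$ is a (proper, translation-friendly) ideal, I would argue it is then comeager on all of $\mathcal{P}(\omega)$ — roughly, because membership in an ideal is invariant under finite modifications and one can shift the clopen set around — and hence $\mathcal{I}$ is comeager. But then $\mathcal{I}^{+}\supseteq\mathcal{I}^{*}$ (the dual filter) would be meager, and a standard argument (e.g.\ using that a filter with the Baire property is meager, which is really the same theorem) gives a contradiction; alternatively, observe directly that a comeager ideal together with comeager $\mathcal{I}^{+}$ is impossible since both cannot be comeager as they are disjoint. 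I should be careful here: the cleanest route is to reduce $(2)\Rightarrow(3)$ directly.

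For $(2)\Rightarrow(3)$: This is the heart. Assume $\mathcal{I}$ is meager, so (by $(1)\Leftrightarrow(2)$, the easy direction) $\mathcal{I}$ is contained in a meager $F_\sigma$ set $\bigcup_n N_n$ with each $N_n$ nowhere dense closed and increasing. Work in $2^\omega$. The idea is to build, recursively, an increasing sequence $0=k_0<k_1<k_2<\cdots$ defining the interval partition $P_n=[k_n,k_{n+1})$, so that any $X$ containing infinitely many full intervals $P_n$ avoids all the $N_i$, hence lies in $\mathcal{I}^+$. The key combinatorial step: given a closed nowhere dense $N\subseteq 2^\omega$ and a finite partial condition $t\in 2^{<\omega}$, and given any target pattern on the ``already decided'' part, one can extend to a longer condition $u$ such that $[u]\cap N=\emptyset$; moreover since $N$ is nowhere dense, no matter how we fill in a block of coordinates with $1$'s, we can always find a further extension escaping $N$. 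Running a bookkeeping argument over all pairs (nowhere dense set $N_i$, finite "stem"), one arranges the blocks $P_n$ so that: for every way of choosing infinitely many blocks to be "all $1$'s", the resulting set meets the complement of each $N_i$ on a clopen set. Concretely: enumerate tasks so that at stage $n$, for the relevant $N_i$, the block $P_n$ is chosen long enough that for every $s\in 2^{k_n}$ there is an extension of $s\cup(\mathbf{1}\restriction P_n)$ missing $N_i$; this guarantees that if $X\supseteq P_n$ for infinitely many $n$ then $X\notin N_i$ for each $i$ (take $n$ large with $P_n$ handling $N_i$ and such that $k_n$ is past where $N_i$'s nowhere-denseness "stabilizes").

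For $(3)\Rightarrow(1)$: If $P=\{P_n\}$ is a Talagrand partition for $\mathcal{I}$, then $\mathcal{I}\subseteq\{X : X$ contains only finitely many $P_n\}$. This latter set is $\bigcup_m\{X : (\forall n\ge m)\ P_n\not\subseteq X\}$, and each set $\{X : P_n\not\subseteq X\}$ is clopen (it depends only on coordinates in $P_n$) and a proper subset, hence the countable intersection over $n\ge m$ is closed nowhere dense. So $\mathcal{I}$ is contained in a meager $F_\sigma$ set, hence meager.

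\textbf{Main obstacle.} The delicate point is the recursive construction in $(2)\Rightarrow(3)$: correctly managing the bookkeeping so that a \emph{single} interval partition simultaneously handles all the nowhere dense sets $N_i$ and all possible "stems", and verifying that containing infinitely many blocks genuinely forces $\mathcal{I}$-positivity. One must exploit that the $N_i$ are \emph{closed} nowhere dense (so that escaping each $N_i$ is a finite-extension task) and that they are increasing, so that a single large block can be made to work for $N_i$ on all stems of a given length. Getting the quantifier order right — "for all stems $s$ of length $k_n$, the block $P_n$ padded with $1$'s admits an escaping extension" — is what makes the Talagrand partition work uniformly. Since this is a classical result, I would cite \cite{Barty} for the full details and only indicate this skeleton.
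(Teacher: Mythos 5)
The paper itself does not prove this theorem (it cites \cite{Barty}), so the only question is whether your sketch is sound, and the heart of it is not. Your plan for the hard direction is to choose the interval partition so that \emph{every} $X$ containing infinitely many full blocks $P_n$ avoids every $N_i$ in the closed nowhere dense cover of $\mathcal{I}$. That goal is unattainable: the sets $N_i$ only cover $\mathcal{I}$, and nothing prevents some $N_i$ from containing, say, the point $\omega$ (the all-ones set); then $X=\omega$ contains every block of every interval partition and still lies in $N_i$. The tell-tale sign is that your argument never uses that $\mathcal{I}$ is downward closed, yet the implication ``meager $\Rightarrow$ Talagrand partition'' is false for arbitrary meager families (take the meager family $\{\omega\}$: no partition works). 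Relatedly, the condition you impose at stage $n$ --- for every $s\in 2^{k_n}$ there is \emph{some} extension of $s\cup(\mathbf{1}\upharpoonright P_n)$ missing $N_i$ --- gives no control whatsoever over the coordinates of $X$ beyond $k_{n+1}$, so it cannot guarantee $X\notin N_i$; and there is no point past which a closed nowhere dense set's ``nowhere-denseness stabilizes.''

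The classical repair is exactly where the ideal structure enters. Choose $k_{n+1}$ so large that for every $s\in 2^{k_n}$ there is a pattern $t_s\in 2^{P_n}$ (not all ones!) with $[s^\frown t_s]\cap N_n=\emptyset$ (possible since $N_n$ is closed nowhere dense and there are finitely many $s$). To verify the Talagrand property, suppose $X\in\mathcal{I}$ contains $P_n$ for all $n$ in an infinite set $K$. Define $Y$ by keeping $Y=X$ outside the blocks $P_n$, $n\in K$, and on each such block replacing the trace of $X$ by the escaping pattern $t_s$, where $s$ codes $Y\cap[0,k_n)$; this is legitimate precisely because $X\supseteq P_n$, so any pattern there yields $Y\subseteq X$, hence $Y\in\mathcal{I}$ by downward closure. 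But $Y\in[s^\frown t_s]$ forces $Y\notin N_n$ for every $n\in K$, and since the $N_n$ increase, $Y\notin\bigcup_n N_n\supseteq\mathcal{I}$, a contradiction. Two smaller points: your ``$(2)\Rightarrow(3)$'' in fact assumes $(1)$, so the cycle only closes through your $(2)\Rightarrow(1)$, whose final contradiction is garbled --- the correct one-liner is that complementation $X\mapsto\omega\setminus X$ is a homeomorphism carrying $\mathcal{I}$ onto the dual filter $\mathcal{I}^{\ast}$, so a comeager $\mathcal{I}$ would make the two disjoint sets $\mathcal{I}$ and $\mathcal{I}^{\ast}$ both comeager (it is $\mathcal{I}^{\ast}$, not $\mathcal{I}^{+}$, that matters); moreover both that step and the theorem itself tacitly require $\mathsf{fin}\subseteq\mathcal{I}$ for invariance under finite modifications.
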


\qquad\qquad\ \ \ \ \qquad\ \ \ 

We have the following:

\begin{proposition}
Let $\mathcal{I}$ and $\mathcal{J}$ be ideals on $\omega.$ If $\mathcal{I}$ is
analytic and $\mathcal{J}\leq_{\text{\textsf{K}}}\mathcal{I},$ then
$\mathcal{J}$ is meager. \label{Prop preimagen analiticos}
\end{proposition}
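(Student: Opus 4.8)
The plan is to not try to push a Talagrand partition forward through the Kat\v{e}tov function — which is awkward, since the function need not be finite‑to‑one and small points of the target can have infinite fibres — but instead to enlarge $\mathcal{J}$ to an \emph{analytic} ideal, for which meagerness is automatic by Theorem \ref{Teorema Talagrand}. Concretely, I would fix a Kat\v{e}tov function $f:\omega\to\omega$ from $\mathcal{I}$ to $\mathcal{J}$, so that $f^{-1}(A)\in\mathcal{I}$ whenever $A\in\mathcal{J}$, and consider the map $\Phi:\mathcal{P}(\omega)\to\mathcal{P}(\omega)$ given by $\Phi(A)=f^{-1}(A)$. A direct computation on the subbasic open sets shows $\Phi$ is continuous: for every $n\in\omega$ we have $\Phi^{-1}(\langle n\rangle_{1})=\langle f(n)\rangle_{1}$ and $\Phi^{-1}(\langle n\rangle_{0})=\langle f(n)\rangle_{0}$.

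Next I would set $\mathcal{K}=\Phi^{-1}(\mathcal{I})=\{A\subseteq\omega\mid f^{-1}(A)\in\mathcal{I}\}$ and check that $\mathcal{K}$ is an ideal on $\omega$. Indeed $\emptyset\in\mathcal{K}$ since $f^{-1}(\emptyset)=\emptyset\in\mathcal{I}$, and $\omega\notin\mathcal{K}$ since $f^{-1}(\omega)=\omega\notin\mathcal{I}$; $\mathcal{K}$ is downward closed because $f^{-1}$ is monotone and $\mathcal{I}$ is downward closed; and $\mathcal{K}$ is closed under finite unions because $f^{-1}(A\cup B)=f^{-1}(A)\cup f^{-1}(B)$ and $\mathcal{I}$ is an ideal. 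Finally, the Kat\v{e}tov property of $f$ says exactly that $f^{-1}(A)\in\mathcal{I}$ for every $A\in\mathcal{J}$, i.e. $\mathcal{J}\subseteq\mathcal{K}$.

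Now, since $\mathcal{I}$ is analytic and $\Phi$ is continuous, $\mathcal{K}=\Phi^{-1}(\mathcal{I})$ is analytic, because the class of analytic subsets of a Polish space is closed under continuous preimages; in particular $\mathcal{K}$ has the Baire property. Applying Theorem \ref{Teorema Talagrand} to $\mathcal{K}$, we conclude that $\mathcal{K}$ is meager, and therefore so is its subset $\mathcal{J}$. I do not expect a genuine obstacle here: the only step that requires a moment's thought is the idea of replacing $\mathcal{J}$ by the larger ideal $\mathcal{K}$ instead of arguing with $\mathcal{J}$ and $f$ directly, after which all that remains is the routine verification that $\mathcal{K}$ is an ideal together with the classical facts that analytic sets have the Baire property and that analyticity is preserved under continuous preimages.
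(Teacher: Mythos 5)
Your argument is correct and is essentially the paper's own proof: the paper also passes to the larger ideal $\mathcal{L}=\{A\mid f^{-1}(A)\in\mathcal{I}\}$, notes it is analytic (your continuity-of-$\Phi$ computation just makes this explicit), and applies Theorem \ref{Teorema Talagrand} to conclude meagerness of $\mathcal{J}\subseteq\mathcal{L}$.
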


\begin{proof}
Let $g\in\omega^{\omega}$ be a Kat\v{e}tov function from $\left(
\omega,\mathcal{I}\right)  $ to $\left(  \omega,\mathcal{J}\right)  .$ Define
$\mathcal{L=}\left\{  A\mid g^{-1}\left(  A\right)  \in\mathcal{I}\right\}  ,$
it follows that $\mathcal{J\subseteq L}$ and since $\mathcal{I}$ is analytic,
it is easy to see that $\mathcal{L}$ is analytic as well. In this way,
$\mathcal{L}$ has the Baire property (see \cite{Kechris}) and by Theorem
\ref{Teorema Talagrand}, it is meager. Since $\mathcal{J\subseteq L},$ we
conclude that $\mathcal{J}$ is also meager.
\end{proof}

\qquad\ \ \qquad\ \ \ \ \ \ \ 

It is worth pointing out that the Kat\v{e}tov predecesors of a meager ideal
are not necessarily meager. In fact, it can be shown that the meager ideals
are cofinal in the Kat\v{e}tov order, so the hypothesis that $\mathcal{I}$ is
analytic is essential.

\qquad\ \ \ \qquad\ \ \ \ \ 

The cardinal invariant \textsf{non*}$(\mathcal{ED}_{\text{\textsf{fin}}})$
will play an important role in this work. We will need the following theorem,
which was obtained by Meza, Minami and the fourth author.

\begin{theorem}
[H., Meza, Minami \cite{PairSplitting}]\label{cosas nonED}\qquad
\ \ \ \ \ \qquad\ \ \ \ \qquad\ \ \ \ 

\begin{enumerate}
\item \ \textsf{cov}$\left(  \mathcal{M}\right)  =$ \textsf{min}%
$\mathsf{\{}\mathfrak{d},$ \textsf{non}*$(\mathcal{ED}_{\text{\textsf{fin}}%
})\}.$

\item Let $\kappa$ be an infinite cardinal. The following are equivalent:

\begin{enumerate}
\item $\kappa<$ \textsf{non}*$(\mathcal{ED}_{\text{\textsf{fin}}}).$

\item For every increasing interval partition $P=\left\{  P_{n}\mid n\in
\omega\right\}  $ and $\mathcal{B}$ a family of size $\kappa$ consisting of
partial infinite selectors of $P$, there is a selector of $P$ that has
infinite intersection with every element of $\mathcal{B}.$
\end{enumerate}
\end{enumerate}
\end{theorem}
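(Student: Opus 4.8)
The plan is to prove the combinatorial characterization~(2) first, and then to deduce the identity~(1) from it together with the explicit description of $\mathcal{ED}_{\text{\textsf{fin}}}$ and the chopped--real analysis of $\mathsf{cov}(\mathcal{M})$. For~(2) the key reduction I would use is that for \emph{any} increasing interval partition $P$ the ideal $\mathcal{ED}_{P}$ is isomorphic to $\mathcal{ED}_{\text{\textsf{fin}}}$ (by the Proposition above), so that $\mathsf{non}^{*}(\mathcal{ED}_{P})=\mathsf{non}^{*}(\mathcal{ED}_{\text{\textsf{fin}}})$, a set $A\subseteq\bigcup_{n}P_{n}$ lies in $\mathcal{ED}_{P}$ exactly when it is covered by finitely many selectors of $P$, and the particular $P$ appearing in~(b) is immaterial. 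The direction $(\mathrm{b})\Rightarrow(\mathrm{a})$ is then easy: given $\mathcal{H}=\{H_{\alpha}:\alpha<\kappa\}\subseteq[\bigcup_{n}P_{n}]^{\omega}$, each $H_{\alpha}$ meets infinitely many (finite) blocks of $P$ and hence contains a partial infinite selector $B_{\alpha}$; applying~(b) to $\{B_{\alpha}:\alpha<\kappa\}$ yields a selector $S$ with $S\cap H_{\alpha}$ infinite for all $\alpha$, and $S\in\mathcal{ED}_{P}$ witnesses that $\mathcal{H}$ is not a $\mathsf{non}^{*}$--family, so $\kappa<\mathsf{non}^{*}(\mathcal{ED}_{P})=\mathsf{non}^{*}(\mathcal{ED}_{\text{\textsf{fin}}})$.

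For $(\mathrm{a})\Rightarrow(\mathrm{b})$ I would fix $P$ and $\mathcal{B}=\{B_{\alpha}:\alpha<\kappa\}$. Since $\kappa<\mathsf{non}^{*}(\mathcal{ED}_{\text{\textsf{fin}}})=\mathsf{non}^{*}(\mathcal{ED}_{P})$, the family $\mathcal{B}$ fails to witness $\mathsf{non}^{*}(\mathcal{ED}_{P})$, so there is $A\in\mathcal{ED}_{P}$ with $A\cap B_{\alpha}$ infinite for every $\alpha$; fix selectors $S_{0},\dots,S_{k-1}$ of $P$ with $A\subseteq S_{0}\cup\dots\cup S_{k-1}$ (so $|A\cap P_{n}|\le k$ for all $n$). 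What remains is to \emph{merge} these finitely many selectors into a single selector still meeting every $B_{\alpha}$ infinitely often; relabelling the at most $k$ points of each $A\cap P_{n}$ as colours, this is exactly the following ``finite selection lemma'': if $\kappa<\mathsf{non}^{*}(\mathcal{ED}_{\text{\textsf{fin}}})$, $k\in\omega$, and $c_{\alpha}\colon J_{\alpha}\to k$ $(\alpha<\kappa$, $J_{\alpha}\in[\omega]^{\omega})$ are partial colourings, then there is $c\colon\omega\to k$ with $\{n\in J_{\alpha}:c(n)=c_{\alpha}(n)\}$ infinite for all $\alpha$. \textbf{I expect this lemma to be the main obstacle.} I would prove it by induction on $k$: the case $k=1$ is trivial, and the inductive step ``absorbs'' the last colour by splitting $\kappa$ according to whether a requirement can already be met by a colouring with fewer colours on a suitable restriction, reducing the rest to the inductive hypothesis and to a direct construction of $c$ carried out along a $\mathsf{non}^{*}(\mathcal{ED}_{\text{\textsf{fin}}})$--witnessing family (in effect the coincidence of the finite pair--splitting numbers with $\mathsf{non}^{*}(\mathcal{ED}_{\text{\textsf{fin}}})$). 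Granting the lemma, applying it to the colourings of the $P_{n}$ induced by $A$ and by the $B_{\alpha}$ gives the required selector.

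For~(1) the two upper bounds are easier. That $\mathsf{cov}(\mathcal{M})\le\mathfrak{d}$ is classical (recalled in the preliminaries). For $\mathsf{cov}(\mathcal{M})\le\mathsf{non}^{*}(\mathcal{ED}_{\text{\textsf{fin}}})$, let $\mathcal{H}=\{H_{\alpha}:\alpha<\lambda\}$ witness $\mathsf{non}^{*}(\mathcal{ED}_{\text{\textsf{fin}}})$ (on $\triangle$, with columns $C_{n}$ of size $n+1$), choose for each $\alpha$ a partial selector $\sigma_{\alpha}$ with $\sigma_{\alpha}(n)\in H_{\alpha}\cap C_{n}$ whenever that set is nonempty, and set $M_{\alpha}=\{g\in\prod_{n}(n+1):\{n\in\mathrm{dom}\,\sigma_{\alpha}:g(n)=\sigma_{\alpha}(n)\}\text{ is finite}\}$. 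Each $M_{\alpha}$ is meager (a countable union of closed nowhere dense sets). Given $g\in\prod_{n}(n+1)$, its graph is a function graph contained in $\triangle$, hence lies in $\mathcal{ED}_{\text{\textsf{fin}}}$, so there is $\alpha$ with $\mathrm{graph}(g)\cap H_{\alpha}$ finite; since $g(n)=\sigma_{\alpha}(n)$ forces $(n,g(n))\in H_{\alpha}$, we get $g\in M_{\alpha}$. Thus $\{M_{\alpha}:\alpha<\lambda\}$ covers $\prod_{n}(n+1)$ and $\mathsf{cov}(\mathcal{M})\le\lambda$.

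Finally, for the lower bound $\min\{\mathfrak{d},\mathsf{non}^{*}(\mathcal{ED}_{\text{\textsf{fin}}})\}\le\mathsf{cov}(\mathcal{M})$ I would put $\kappa=\mathsf{cov}(\mathcal{M})$ and, assuming $\kappa<\mathsf{non}^{*}(\mathcal{ED}_{\text{\textsf{fin}}})$, show $\mathfrak{d}\le\kappa$; so suppose toward a contradiction that $\kappa<\mathfrak{d}$ too. Fix meager sets $\{M_{\alpha}:\alpha<\kappa\}$ covering $2^{\omega}$, which by the chopped--real description of meager sets we may take to satisfy $M_{\alpha}\subseteq\{x\in2^{\omega}:x\upharpoonright I^{\alpha}_{k}\ne y_{\alpha}\upharpoonright I^{\alpha}_{k}\text{ for all but finitely many }k\}$ for interval partitions $\Pi_{\alpha}=\{I^{\alpha}_{k}:k\in\omega\}$ and $y_{\alpha}\in2^{\omega}$. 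Since there are fewer than $\mathfrak{d}$ of the $\Pi_{\alpha}$, they do not dominate in the interval--partition order recalled in Section~2, and a routine manipulation gives an interval partition $\Sigma=\{J_{m}:m\in\omega\}$ with $|J_{m}|$ strictly increasing such that for every $\alpha$ infinitely many $J_{m}$ contain a complete $\Pi_{\alpha}$--interval. Let $Q$ be the increasing interval partition with $|Q_{m}|=2^{|J_{m}|}$ and identify a selector of $Q$ with an $x\in2^{\omega}$ via the choice of $x\upharpoonright J_{m}$ at each $m$. For each $\alpha$ let $B_{\alpha}$ be the partial selector of $Q$ that at each $m$ with $J_{m}$ containing a fixed $\Pi_{\alpha}$--interval $I^{\alpha}_{k(\alpha,m)}$ picks the string equal to $y_{\alpha}$ on $I^{\alpha}_{k(\alpha,m)}$ and $0$ elsewhere on $J_{m}$; this is a partial infinite selector of $Q$, and $|\{B_{\alpha}:\alpha<\kappa\}|=\kappa<\mathsf{non}^{*}(\mathcal{ED}_{\text{\textsf{fin}}})$. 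By part~(2) there is a selector $S$ of $Q$ with $S\cap B_{\alpha}$ infinite for every $\alpha$; the corresponding $x\in2^{\omega}$ then has $x\upharpoonright I^{\alpha}_{k}=y_{\alpha}\upharpoonright I^{\alpha}_{k}$ for infinitely many $k$, for each $\alpha$, so $x\notin\bigcup_{\alpha}M_{\alpha}$, contradicting that these sets cover $2^{\omega}$. Hence $\mathfrak{d}\le\kappa$, as required.
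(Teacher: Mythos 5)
This theorem is not proved in the paper at all: it is quoted from the Hru\v{s}\'{a}k--Meza--Minami paper \cite{PairSplitting}, so there is no internal proof to compare against; your proposal has to stand on its own. Much of it does: the direction (b)$\Rightarrow$(a), the use of the isomorphism of the ideals $\mathcal{ED}_P$ to transfer \textsf{non}$^*$, the meager-cover construction showing \textsf{cov}$\left(\mathcal{M}\right)\leq$ \textsf{non}$^*(\mathcal{ED}_{\text{\textsf{fin}}})$, and the overall architecture of the lower bound (chopped-real form of meager sets, an interval partition $\Sigma$ engulfing infinitely many $\Pi_\alpha$-intervals for each $\alpha$ when $\kappa<\mathfrak{d}$ --- this step is a bit more than ``routine'', since non-domination only gives good points $n$ that need not be endpoints of $\Sigma$, but it is repairable by the standard composition/two-consecutive-intervals trick) are all sound, modulo part (2).

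The genuine gap is exactly where you flagged it, and it is not a small one: the ``finite selection lemma'' is never proved, and it is the entire content of the hard direction (a)$\Rightarrow$(b). From $\kappa<$ \textsf{non}$^*(\mathcal{ED}_P)$ you only extract a set $A\in\mathcal{ED}_P$ meeting every $B_\alpha$ infinitely, i.e.\ a union of $k$ selectors, and passing from $k$ selectors to one is precisely the conflict-resolution that makes the theorem nontrivial; every natural re-encoding (columns of constant width $k$, or blocks $k^{I_m}$ fed back into \textsf{non}$^*$) either produces an improper ideal or reproduces the same ``finitely many selectors'' problem, so the lemma cannot be obtained by another application of the hypothesis in the way the rest of your argument is. Your proposed proof by induction on $k$ does not address this: the base case $k=1$ carries no information, and the full difficulty is already present at $k=2$, where the inductive hypothesis is vacuous; the one-sentence description of the inductive step (``splitting $\kappa$ according to whether a requirement can already be met\ldots'') is not a checkable argument, and the parenthetical appeal to ``the coincidence of the finite pair-splitting numbers with \textsf{non}$^*(\mathcal{ED}_{\text{\textsf{fin}}})$'' begs the question, since that coincidence is essentially the theorem of \cite{PairSplitting} you are trying to prove. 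The lemma is in fact true (it follows from part (2) by coding colourings as selectors of blocks $k^{I_m}$), but that only confirms it is equivalent to the statement at issue; as written, the core of the proof is missing, and consequently the lower bound in part (1), which invokes part (2), is also incomplete.
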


\section{Uniformity number of a class of ideals \label{seccion non}}

Letting $\Gamma$ a class of ideals on countable sets, we denote by
\textsf{non}$\left(  \Gamma\right)  $ the least cofinality of an ideal that is
not in $\Gamma.$ Of course, this definition only makes sense if there exists
an ideal that is not in $\Gamma$, which is always the case in the interesting
settings. The study of this type of cardinal invariants is quite useful, as it
allows us to deduce properties of an ideal \textquotedblleft for
free\textquotedblright\ (or more formally, by merely knowing its cofinality).
Although the notation used here is likely new, these invariants have been
studied for quite some time. Below, we summarize some already known results.

\begin{center}%
\begin{tabular}
[c]{|c|c|c|}\hline
\textbf{Cardinal \textbf{I}nvariant} & \textbf{Uniformity number of} &
\textbf{Reference}\\\hline
&  & \\
$\mathfrak{b}$ & Meager ideals & \cite{HandbookBlass}\\
&  & \\
$\mathfrak{d}$ & \textsf{P}$^{+}$-ideals & \cite{HandbookBlass}\\
&  & \\
\textsf{cov}$\left(  \mathcal{M}\right)  $ & $+$-Ramsey ideals &
\cite{SelectivityofAlmostDisjointFamilies}\\\hline
\end{tabular}
\smallskip

\textsf{Table 3. Some uniformity numbers of classes}
\end{center}

We are interested in the uniformity numbers for the classes of \textsf{P}%
$^{-},$ \textsf{Q}$^{+}$ and weakly selective ideals, which we denote by
\textsf{non}$($\textsf{P}$^{-}),$ \textsf{non}$($\textsf{Q}$^{+})$ and
\textsf{non}$($\textsf{WS}$)$ respectively. The case of for \textsf{non}%
$($\textsf{P}$^{-})$ is very easy:

\begin{proposition}
$\mathfrak{d=}$ \textsf{non}$($\textsf{P}$^{-}).$
\end{proposition}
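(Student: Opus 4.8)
The plan is to prove the two inequalities separately, using in both directions the combinatorial characterization from Table~2: an ideal $\mathcal{I}$ is $\mathsf{P}^-$ if and only if $\mathsf{fin}\times\mathsf{fin}\nleq_{\mathsf{K}}\mathcal{I}\upharpoonright X$ for every $X\in\mathcal{I}^+$, equivalently, for every $X\in\mathcal{I}^+$ and every countable $\subseteq^*$-decreasing family from $(\mathcal{I}\upharpoonright X)^*$ there is a pseudointersection in $\mathcal{I}^+$. For the inequality $\mathsf{non}(\mathsf{P}^-)\leq\mathfrak{d}$, I would exhibit a concrete ideal of cofinality $\mathfrak{d}$ that fails to be $\mathsf{P}^-$. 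The natural candidate is $\mathsf{fin}\times\mathsf{fin}$ itself: it is generated by the columns together with the sets $D(f)$ for $f\in\omega^\omega$, and a cofinal family is obtained by taking $\{C_0\cup\dots\cup C_n\cup D(f) : n\in\omega,\ f\in\mathcal{D}\}$ for any dominating family $\mathcal{D}$ of size $\mathfrak{d}$, so $\mathsf{cof}(\mathsf{fin}\times\mathsf{fin})\leq\mathfrak{d}$; the reverse inequality $\mathsf{cof}(\mathsf{fin}\times\mathsf{fin})\geq\mathfrak d$ is also easy since a cofinal family projects to a dominating family. And $\mathsf{fin}\times\mathsf{fin}$ is not $\mathsf{P}^-$ because $\mathsf{fin}\times\mathsf{fin}\leq_{\mathsf K}(\mathsf{fin}\times\mathsf{fin})\upharpoonright X$ trivially for $X=\omega^2\in(\mathsf{fin}\times\mathsf{fin})^+$ (take the identity); concretely the sets $\omega^2\setminus(C_0\cup\dots\cup C_n)$ form a $\subseteq^*$-decreasing sequence in $(\mathsf{fin}\times\mathsf{fin})^*$ with no positive pseudointersection. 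This gives $\mathsf{non}(\mathsf P^-)\leq\mathfrak d$.

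For the reverse inequality $\mathfrak{d}\leq\mathsf{non}(\mathsf{P}^-)$, I would take any ideal $\mathcal{I}$ with $\mathsf{cof}(\mathcal{I})<\mathfrak{d}$ and show it is $\mathsf{P}^-$. Fix $X\in\mathcal{I}^+$ and a $\subseteq^*$-decreasing sequence $\langle X_n : n\in\omega\rangle$ in $(\mathcal{I}\upharpoonright X)^*$, i.e., each $X\setminus X_n\in\mathcal{I}$; we must find $Y\in\mathcal{I}^+$, $Y\subseteq^* X_n$ for all $n$. Let $\{A_\alpha : \alpha<\kappa\}$, $\kappa = \mathsf{cof}(\mathcal{I})<\mathfrak{d}$, be cofinal in $\mathcal{I}$; without loss of generality each $A_\alpha\supseteq X\setminus X_0$ and the family is closed under finite unions. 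For each $\alpha$, since $A_\alpha\in\mathcal{I}$ and $X\in\mathcal{I}^+$, the set $X\setminus A_\alpha$ is infinite, and since $X\setminus X_n\subseteq A_\alpha$ eventually is not forced — here is the subtlety — I want to encode the obstruction as a domination requirement. The standard move: enumerate $X=\{x_k : k\in\omega\}$ increasingly (identifying $X$ with $\omega$), and for each $\alpha$ define $h_\alpha\in\omega^\omega$ by $h_\alpha(n) = $ the least $k$ such that $(X\setminus X_n)\cap X \subseteq A_\alpha$ fails to help — more cleanly, set $h_\alpha(n)$ to be least such that $(X\setminus A_\alpha)\setminus h_\alpha(n)\subseteq X_n$, which exists because $X\setminus X_n\in\mathcal I$ hence $(X\setminus X_n)\cap(X\setminus A_\alpha)$ is a subset of $X\setminus X_n$ that is... no: I actually need $(X\setminus X_n)\cap (X\setminus A_\alpha)$ finite, which holds iff $(X\setminus X_n)\subseteq^* A_\alpha$; this is where cofinality is used: $X\setminus X_n\in\mathcal I$ so there is $\alpha(n)$ with $X\setminus X_n\subseteq A_{\alpha(n)}$, and by closure under finite unions and cofinality I can arrange a single increasing sequence $\alpha(0)\leq\alpha(1)\leq\cdots$ with $X\setminus X_n\subseteq A_{\alpha(n)}$. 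Since $\kappa<\mathfrak d$ — more precisely $\kappa < \mathsf{cov}(\mathcal M)$ would not suffice, but $\kappa<\mathfrak d$ does via interval partitions — the family $\{h_\alpha\}$ is not dominating, so there is $g\in\omega^\omega$ with $g\not\leq^* h_\alpha$ for all $\alpha$, equivalently $g(n)>h_\alpha(n)$ infinitely often for each $\alpha$; thinning $X$ along $g$ produces the required positive pseudointersection. I would carry out exactly this construction, choosing $Y\subseteq X$ meeting each "past-$g(n)$" segment of $X\setminus X_n$ in a controlled way, arrange $Y\subseteq^* X_n$ for every $n$ by construction, and verify $Y\in\mathcal{I}^+$ by checking $Y\not\subseteq A_\alpha$ for every $\alpha$ using $g\not\leq^* h_\alpha$.

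The main obstacle is getting the direction $\mathfrak d\leq\mathsf{non}(\mathsf P^-)$ to use $\mathfrak{d}$ rather than only $\mathsf{cov}(\mathcal M)$: the naive diagonalization against $\kappa$ many functions only needs an unbounded-somewhere real, which has norm $\mathsf{cov}(\mathcal M)$ via the characterization of Table~3 ($+$-Ramsey has uniformity $\mathsf{cov}(\mathcal M)$, and $\mathsf P^-$ is weaker than $+$-Ramsey). So I must be careful that the correct statement, and the one the authors intend, genuinely requires domination. I expect the honest argument to go through interval partitions: reformulate "$X\setminus X_n\subseteq^* A_{\alpha}$ for all $n$" as a single interval-partition domination demand (the $n$-th block of the partition must be placed beyond where $X\setminus X_n$ stops leaving $A_\alpha$), invoke that $\mathfrak d$ is the least size of a dominating family of interval partitions (stated in the Preliminaries), and conclude. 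The bookkeeping to simultaneously handle all $n$ and all $\alpha$ with a single real is the only place real care is needed; everything else is routine manipulation of the Table~2 characterization.
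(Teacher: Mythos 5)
Your first inequality, $\mathsf{non}(\mathsf{P}^{-})\leq\mathfrak{d}$, is correct and is exactly the paper's argument: $\mathsf{fin}\times\mathsf{fin}$ has cofinality $\mathfrak{d}$ and is not $\mathsf{P}^{-}$. The gap is in the other direction. The paper handles it in one line by quoting two facts it has already recorded: every ideal of cofinality less than $\mathfrak{d}$ is $\mathsf{P}^{+}$ (Table 3, from Blass's Handbook) and $\mathsf{P}^{+}$ implies $\mathsf{P}^{-}$. You instead try to reprove this from scratch, and your sketch breaks at its central point: the function $h_{\alpha}(n)=$ least $k$ with $(X\setminus A_{\alpha})\setminus k\subseteq X_{n}$ is not defined for all $\alpha$, since (as you yourself note) it requires $X\setminus X_{n}\subseteq^{\ast}A_{\alpha}$. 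Your proposed repair --- choosing $\alpha(0)\leq\alpha(1)\leq\cdots$ with $X\setminus X_{n}\subseteq A_{\alpha(n)}$ --- does not fix this: to make $h_{\alpha}$ total for a fixed $\alpha$ you would need a single $A_{\alpha}\in\mathcal{I}$ almost containing every $X\setminus X_{n}$, and no such set need exist (the union of the $X\setminus X_{n}$ is typically $\mathcal{I}$-positive). So the family $\{h_{\alpha}\}$ you intend to diagonalize against is not available, the interval-partition reformulation is never carried out, and the description of $Y$ as ``meeting each past-$g(n)$ segment of $X\setminus X_{n}$'' is backwards: a pseudointersection of the $X_{n}$ must eventually avoid each $X\setminus X_{n}$.

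Your worry that the naive diagonalization only costs $\mathsf{cov}(\mathcal{M})$ is also misplaced, and it is what pushed you toward the unexecuted detour. The standard argument (which is precisely the proof that $\mathsf{cof}(\mathcal{I})<\mathfrak{d}$ gives $\mathsf{P}^{+}$, the fact the paper cites) runs as follows: replace the given family by its finite intersections, so the $X_{n}$ are decreasing, contained in $X$, and $\mathcal{I}$-positive; fix a cofinal $\{A_{\alpha}\mid\alpha<\kappa\}\subseteq\mathcal{I}$ with $\kappa<\mathfrak{d}$, and put $f_{\alpha}(n)=\min(X_{n}\setminus A_{\alpha})$, which exists because $X_{n}\in\mathcal{I}^{+}$. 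The assertion ``every family of $\kappa$ functions fails to be dominating'' is exactly $\kappa<\mathfrak{d}$ (not $\mathsf{cov}(\mathcal{M})$), so there is $g$ with $g(n)>f_{\alpha}(n)$ for infinitely many $n$, for every $\alpha$. Then $Y=\bigcup_{n\in\omega}\left(X_{n}\cap[0,g(n))\right)$ is almost contained in each $X_{m}$ (by monotonicity only the finitely many terms with $n<m$ can leave $X_{m}$), and $Y\nsubseteq A_{\alpha}$ for every $\alpha$, since $Y\subseteq A_{\alpha}$ would force $f_{\alpha}(n)\geq g(n)$ for all $n$; hence $Y\in\mathcal{I}^{+}$ and $\mathcal{I}$ is $\mathsf{P}^{-}$. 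With this replacement your outline becomes a correct self-contained proof of what the paper obtains by citation; as written, the key step is missing.
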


\begin{proof}
Since \textsf{fin}$\times$\textsf{fin }is not \textsf{P}$^{-}$ and it has
cofinality $\mathfrak{d},$ we get that \textsf{non}$($\textsf{P}$^{-}%
)\leq\mathfrak{d}.$ On the other hand, as noted above, every ideal of
cofinality less than $\mathfrak{d}$ is \textsf{P}$^{+},$ so we conclude that
$\mathfrak{d}$ $\leq$ \textsf{non}$($\textsf{P}$^{-}).$
\end{proof}

\qquad\qquad\ \ \ \qquad\ \ 

For an ideal $\mathcal{I},$ denote the classes of ideals $K\left(
\mathcal{I}\right)  =\{\mathcal{J}\mid\mathcal{I}\nleq_{\text{\textsf{K}}%
}\mathcal{J}\}$ and $KB\left(  \mathcal{I}\right)  =\{\mathcal{J}%
\mid\mathcal{I}\nleq_{\text{\textsf{KB}}}\mathcal{J}\}.$ The following notion
was studied by Brendle and Fla\v{s}kov\'{a} \cite{BrendleFlaskova} \ and by
Hong and Zhang in \cite{HongZhang}.

\begin{definition}
Let $\mathcal{I}$ be an ideal on $\omega.$ The \emph{exterior cofinality}
(also called \emph{generic existence number}) of $\mathcal{I}$ is defined as
$\mathfrak{ge}\left(  \mathcal{I}\right)  =$ \textsf{min}$\mathsf{\{}%
$\textsf{cof}$\left(  \mathcal{J}\right)  \mid\mathcal{I\subseteq J}\}.$
\end{definition}

The generic existence number was introduced to investigate the \emph{generic
existence} of certain special classes of ultrafilters. Specifically, it refers
to the property that any filter with cofinality less than $\mathfrak{c}$ can
be extended to an ultrafilter within that class. For further details, we refer
the interested reader to the previously mentioned papers.

\begin{lemma}
Let $\mathcal{I}$ be an ideal on $\omega.$ The following cardinal invariants
are equal:

\begin{enumerate}
\item $\mathfrak{ge}\left(  \mathcal{I}\right)  .$

\item \textsf{min}$\mathsf{\{}$\textsf{cof}$\left(  \mathcal{J}\right)
\mid\mathcal{I\leq_{\text{\textsf{K}}}J}\}.$

\item \textsf{min}$\mathsf{\{}$\textsf{cof}$\left(  \mathcal{J}\right)
\mid\mathcal{I\leq_{\text{\textsf{KB}}}J}\}.$

\item \textsf{non}$\left(  K\left(  \mathcal{I}\right)  \right)  .$

\item \textsf{non}$\left(  KB\left(  \mathcal{I}\right)  \right)  .$
\end{enumerate}
\end{lemma}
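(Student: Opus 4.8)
The plan is to prove a cycle of implications among the five quantities, showing each is $\leq$ the next and closing the loop. Since $\leq_{\textsf{KB}}$ implies $\leq_{\textsf{K}}$, the family $\{\mathcal{J}\mid \mathcal{I}\leq_{\textsf{KB}}\mathcal{J}\}$ is contained in $\{\mathcal{J}\mid \mathcal{I}\leq_{\textsf{K}}\mathcal{J}\}$, so item (2) $\leq$ item (3) is immediate; similarly $KB(\mathcal{I})\subseteq K(\mathcal{I})$ gives $\textsf{non}(KB(\mathcal{I}))\geq\textsf{non}(K(\mathcal{I}))$, i.e.\ item (4) $\leq$ item (5). The content is in relating the ``$\mathfrak{cof}$ of something Katětov-above'' description to the ``$\textsf{non}$ of the complement of a downward Katětov cone'' description, and in pinning both to $\mathfrak{ge}(\mathcal{I})$.

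First I would record the elementary observation that $\mathcal{I}\subseteq\mathcal{J}$ implies $\mathcal{I}\leq_{\textsf{KB}}\mathcal{J}$ (Lemma \ref{Lema Katetov basico}, part 1, via the identity function). Hence every $\mathcal{J}$ witnessing the minimum in $\mathfrak{ge}(\mathcal{I})$ also witnesses the minima in items (2) and (3), giving item (2) $\leq$ item (3) $\leq \mathfrak{ge}(\mathcal{I})$, and combined with item (2)$\leq$(3) above we have $\mathfrak{ge}(\mathcal{I})\geq$ (3) $\geq$ (2). For the reverse direction, the key trick is that if $\mathcal{I}\leq_{\textsf{K}}\mathcal{J}$ via $f:\omega\to\omega$, then the family $\mathcal{L}=\{A\mid f^{-1}(A)\in\mathcal{I}\}$ from the proof of Proposition \ref{Prop preimagen analiticos} is an ideal containing $\mathcal{I}$, and $\textsf{cof}(\mathcal{L})\leq\textsf{cof}(\mathcal{J})$ because a cofinal family in $\mathcal{J}$ is cofinal in $\mathcal{L}$ (indeed $\mathcal{J}\subseteq\mathcal{L}$, and if $A\in\mathcal{L}$ then $f^{-1}(A)\in\mathcal{I}$... one must check $A$ is covered by a member of $\mathcal{J}$ — here the point is $A=f[f^{-1}(A)]\cup(\omega\setminus\textrm{im}(f))$-type care, so one instead argues $\mathcal{J}$ is cofinal in $\mathcal{L}$ directly by noting $A\subseteq f[f^{-1}(A)]\cup (A\setminus \textrm{im} f)$ and absorbing the complement of the image appropriately, or simply passes to $\mathcal{L}'=\mathcal{L}\cap\mathcal{P}(\textrm{im}f)$ and pulls back). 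This shows $\mathfrak{ge}(\mathcal{I})\leq$ (2), closing the chain $\mathfrak{ge}(\mathcal{I})=$ (2) $=$ (3).

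Next I would connect these to items (4) and (5). By definition $\textsf{non}(K(\mathcal{I}))$ is the least cofinality of an ideal $\mathcal{J}$ with $\mathcal{J}\notin K(\mathcal{I})$, i.e.\ with $\mathcal{I}\leq_{\textsf{K}}\mathcal{J}$; so $\textsf{non}(K(\mathcal{I}))$ is literally $\textsf{min}\{\textsf{cof}(\mathcal{J})\mid \mathcal{I}\leq_{\textsf{K}}\mathcal{J}\}$, which is item (2). Likewise $\textsf{non}(KB(\mathcal{I}))=$ item (3). Thus items (4) and (5) are just restatements of (2) and (3) respectively, and the whole lemma collapses to the identity $\mathfrak{ge}(\mathcal{I})=\textsf{min}\{\textsf{cof}(\mathcal{J})\mid\mathcal{I}\leq_{\textsf{K}}\mathcal{J}\}=\textsf{min}\{\textsf{cof}(\mathcal{J})\mid\mathcal{I}\leq_{\textsf{KB}}\mathcal{J}\}$, proved as above.

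The main obstacle is the one subtle inequality $\mathfrak{ge}(\mathcal{I})\leq\textsf{min}\{\textsf{cof}(\mathcal{J})\mid\mathcal{I}\leq_{\textsf{K}}\mathcal{J}\}$: given a Katětov function $f$ witnessing $\mathcal{I}\leq_{\textsf{K}}\mathcal{J}$, one must produce an \emph{ideal} $\mathcal{K}\supseteq\mathcal{I}$ with $\textsf{cof}(\mathcal{K})\leq\textsf{cof}(\mathcal{J})$. The natural candidate is $f^{-1}[\mathcal{J}]:=\{A\subseteq\omega\mid A\subseteq f^{-1}(B)\text{ for some }B\in\mathcal{J}\}$ (the ideal generated by preimages, together with $\mathcal{I}$ if needed to absorb sets missing the image); here $\mathcal{I}\subseteq f^{-1}[\mathcal{J}]$ precisely because $f$ is Katětov (if $A\in\mathcal{I}$, is $A$ a preimage? — one needs $A\subseteq f^{-1}(f[A])$ with $f[A]\in\mathcal{J}$, which holds by Lemma \ref{EquivalenciaKatetov} only when $A\in\mathcal{I}$ forces $f[A]\in\mathcal{J}$, i.e.\ using the \emph{contrapositive} of the positivity characterization). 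And a cofinal family $\{B_\alpha\}$ of $\mathcal{J}$ yields the cofinal family $\{f^{-1}(B_\alpha)\}$ of $f^{-1}[\mathcal{J}]$, so $\textsf{cof}(f^{-1}[\mathcal{J}])\leq\textsf{cof}(\mathcal{J})$. I would write this step carefully, making sure that $f^{-1}[\mathcal{J}]$ is a proper ideal (i.e.\ $\omega\notin f^{-1}[\mathcal{J}]$, which holds since $\omega=f^{-1}(B)$ would force $B\supseteq\textrm{im}(f)\notin\mathcal{J}$ as $f^{-1}$ of a co-image set lands outside $\mathcal{I}$) and genuinely contains $\mathcal{I}$; everything else is bookkeeping.
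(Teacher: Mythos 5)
Your reductions of (4) and (5) to (2) and (3), and the chain (2) $\leq$ (3) $\leq \mathfrak{ge}(\mathcal{I})$ via the identity function, are correct and are exactly what the paper says (the paper, for its part, does not write out the remaining inequality either: it refers to Observation 3.1 of \cite{BrendleFlaskova}). The gap is in the one step you yourself identify as the crux, $\mathfrak{ge}(\mathcal{I})\leq \mathsf{min}\{\mathsf{cof}(\mathcal{J})\mid\mathcal{I}\leq_{\mathsf{K}}\mathcal{J}\}$. First, the direction of the witness is reversed: by the paper's definition, $\mathcal{I}\leq_{\mathsf{K}}\mathcal{J}$ means there is a Kat\v{e}tov function \emph{from} $\mathcal{J}$ \emph{to} $\mathcal{I}$, i.e.\ $f$ maps the underlying set of $\mathcal{J}$ to that of $\mathcal{I}$ and pulls $\mathcal{I}$-sets back into $\mathcal{J}$; your candidate $f^{-1}[\mathcal{J}]$ treats $f$ as going from the $\mathcal{I}$-side to the $\mathcal{J}$-side (a function in that direction would witness $\mathcal{J}\leq_{\mathsf{K}}\mathcal{I}$, and its preimage ideal sits \emph{inside} $\mathcal{I}$, which is useless here). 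Second, and more seriously, the containment $\mathcal{I}\subseteq f^{-1}[\mathcal{J}]$ is justified by the claim that $A\in\mathcal{I}$ forces $f[A]\in\mathcal{J}$; this is the \emph{converse}, not the contrapositive, of Lemma \ref{EquivalenciaKatetov}, and it is false for Kat\v{e}tov functions in general: the projection $\pi(n,m)=m$ is a Kat\v{e}tov function from $(\omega^{2},\mathsf{fin}\times\mathsf{fin})$ to $(\omega,\mathsf{fin})$, yet for unbounded $g$ the generator $D(g)\in\mathsf{fin}\times\mathsf{fin}$ satisfies $\pi[D(g)]=\omega$. Your fallback ``together with $\mathcal{I}$ if needed'' does restore $\mathcal{I}\subseteq\mathcal{K}$, but then the bound $\mathsf{cof}(\mathcal{K})\leq\mathsf{cof}(\mathcal{J})$ is no longer justified, since the generated ideal may need cofinally many sets from $\mathcal{I}$ itself. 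The same swap infects your second paragraph: for $\mathcal{L}=\{A\mid f^{-1}(A)\in\mathcal{I}\}$ neither ``$\mathcal{J}\subseteq\mathcal{L}$'' nor ``a cofinal family in $\mathcal{J}$ is cofinal in $\mathcal{L}$'' follows, and the proposed repairs (absorbing $\omega\setminus\mathrm{im}(f)$, passing to $\mathcal{L}\cap\mathcal{P}(\mathrm{im}f)$) are not carried out.

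The step can be done correctly and cheaply, and it is the pushforward, not the preimage, that works. Let $f$ witness $\mathcal{I}\leq_{\mathsf{K}}\mathcal{J}$ as in the paper, so $A\in\mathcal{I}$ implies $f^{-1}(A)\in\mathcal{J}$, and put $\mathcal{K}=\{A\subseteq\omega\mid f^{-1}(A)\in\mathcal{J}\}$. This is an ideal (preimages commute with unions and respect inclusion, and $f^{-1}(\omega)=\omega\notin\mathcal{J}$ gives properness), and $\mathcal{I}\subseteq\mathcal{K}$ is immediate from the Kat\v{e}tov property -- no contrapositive and no claim about images of small sets is needed. For the cofinality, let $\{B_{\alpha}\mid\alpha<\kappa\}$ be cofinal in $\mathcal{J}$ and set $A_{\alpha}=\{n\mid f^{-1}(\{n\})\subseteq B_{\alpha}\}$. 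Then $f^{-1}(A_{\alpha})\subseteq B_{\alpha}$, so $A_{\alpha}\in\mathcal{K}$; and if $A\in\mathcal{K}$, choose $\alpha$ with $f^{-1}(A)\subseteq B_{\alpha}$ and note that $n\in A$ gives $f^{-1}(\{n\})\subseteq f^{-1}(A)\subseteq B_{\alpha}$, so $A\subseteq A_{\alpha}$. (Observe that $A_{\alpha}$ automatically contains $\omega\setminus\mathrm{im}(f)$, which disposes of the image bookkeeping you were worried about.) Hence $\mathfrak{ge}(\mathcal{I})\leq\mathsf{cof}(\mathcal{K})\leq\mathsf{cof}(\mathcal{J})$, which closes your cycle; the rest of your outline stands.
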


\begin{proof}
The proof of the equality between 1,2 and 3 can be found implicitly in
Observation 3.1 of \cite{BrendleFlaskova}. Finally, \textsf{non}$\left(
K\left(  \mathcal{I}\right)  \right)  $ is obviously equivalent to the
invariant in point 2 and \textsf{non}$\left(  KB\left(  \mathcal{I}\right)
\right)  $ is clearly equivalent to the one in point 3.
\end{proof}

\qquad\qquad\ \ \ \ 

We conclude that \textsf{non}$($\textsf{Q}$^{+})=$ $\mathfrak{ge(}%
\mathcal{ED}_{\text{\textsf{fin}}})$ and \textsf{non}$($\textsf{WS}%
$)=\mathfrak{ge(}\mathcal{ED})$ (here we are using the fact that
\textsf{cof}$\left(  \mathcal{I}\upharpoonright X\right)  \leq$ \textsf{cof}%
$\left(  \mathcal{I}\right)  $ for an ideal $\mathcal{I}$ and $X\in
\mathcal{I}^{+}$). In \cite{BrendleFlaskova} several generic existence numbers
are computed. In particular, we can find the following:

\begin{proposition}
[Brendle, Fla\v{s}kov\'{a} \cite{BrendleFlaskova}]The following holds:
\label{non weakly selective}

\begin{enumerate}
\item \textsf{non}$($\textsf{WS}$)=$ \textsf{cov}$\left(  \mathcal{M}\right)
.$

\item \textsf{non}*$(\mathcal{ED}_{\text{\textsf{fin}}})\leq$ \textsf{non}%
$($\textsf{Q}$^{+}).$
\end{enumerate}
\end{proposition}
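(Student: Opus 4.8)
I would prove (1) and (2) separately, using the identifications $\textsf{non}(\textsf{WS})=\mathfrak{ge}(\mathcal{ED})$ and $\textsf{non}(\textsf{Q}^{+})=\mathfrak{ge}(\mathcal{ED}_{\textsf{fin}})$ obtained above, together with the characterisation $\mathfrak{ge}(\mathcal{I})=\textsf{min}\{\textsf{cof}(\mathcal{J})\mid\mathcal{I}\leq_{\textsf{KB}}\mathcal{J}\}$ from the preceding Lemma. For (2): let $\mathcal{J}$ be any ideal with $\mathcal{ED}_{\textsf{fin}}\leq_{\textsf{KB}}\mathcal{J}$. Applying the relevant entry of Table~2 with $X=\omega$, $\mathcal{J}$ is not $\textsf{Q}^{+}$, hence tall, so $\textsf{non}^{*}(\mathcal{J})$ is defined. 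By Lemma~\ref{Lema Katetov basico}(5), $\textsf{non}^{*}(\mathcal{ED}_{\textsf{fin}})\leq\textsf{non}^{*}(\mathcal{J})$, and $\textsf{non}^{*}(\mathcal{J})\leq\textsf{cof}(\mathcal{J})$ always; taking the infimum over all such $\mathcal{J}$ gives $\textsf{non}^{*}(\mathcal{ED}_{\textsf{fin}})\leq\mathfrak{ge}(\mathcal{ED}_{\textsf{fin}})=\textsf{non}(\textsf{Q}^{+})$.

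For (1), the inequality $\textsf{cov}(\mathcal{M})\leq\textsf{non}(\textsf{WS})$ is immediate: by Table~3 the class of $+$-Ramsey ideals has uniformity number $\textsf{cov}(\mathcal{M})$, so every ideal of cofinality $<\textsf{cov}(\mathcal{M})$ is $+$-Ramsey, hence weakly selective (a $+$-Ramsey ideal is weakly selective, as noted after the definitions); thus no non-weakly-selective ideal has cofinality $<\textsf{cov}(\mathcal{M})$. For the reverse inequality $\textsf{non}(\textsf{WS})\leq\textsf{cov}(\mathcal{M})$ I would, using $\textsf{non}(\textsf{WS})=\mathfrak{ge}(\mathcal{ED})$, produce an ideal $\mathcal{J}$ with $\mathcal{ED}\leq_{\textsf{K}}\mathcal{J}$ and $\textsf{cof}(\mathcal{J})\leq\textsf{cov}(\mathcal{M})$, splitting according to $\textsf{cov}(\mathcal{M})=\textsf{min}\{\mathfrak{d},\textsf{non}^{*}(\mathcal{ED}_{\textsf{fin}})\}$ (Theorem~\ref{cosas nonED}(1)). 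If $\textsf{cov}(\mathcal{M})=\mathfrak{d}$, take $\mathcal{J}=\textsf{fin}\times\textsf{fin}$: it is not $\textsf{P}^{-}$ (by the $\textsf{fin}\times\textsf{fin}$ entry of Table~2 with $X=\omega$), hence not weakly selective, and $\textsf{cof}(\textsf{fin}\times\textsf{fin})=\mathfrak{d}$.

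If instead $\textsf{cov}(\mathcal{M})=\textsf{non}^{*}(\mathcal{ED}_{\textsf{fin}})<\mathfrak{d}$, I would let $\mathcal{J}$ be the ideal on $\omega\times\omega$ generated by the columns $\mathcal{C}$ together with the graphs of a suitably chosen $F\subseteq\omega^{\omega}$ with $|F|=\textsf{cov}(\mathcal{M})$. Such a $\mathcal{J}$ is automatically proper (a finite union of columns and graphs meets every remaining column in a finite set, so it is not all of $\omega\times\omega$) and has $\textsf{cof}(\mathcal{J})\leq|F|=\textsf{cov}(\mathcal{M})$; and by the $\mathcal{ED}$ entry of Table~2, $\mathcal{ED}\leq_{\textsf{K}}\mathcal{J}$ holds as soon as every selector of the column partition lies in $\mathcal{J}$, that is, as soon as $F$ has the property that for each $\sigma\in\omega^{\omega}$ there are finitely many $f_{0},\dots,f_{k}\in F$ with $\sigma(n)\in\{f_{0}(n),\dots,f_{k}(n)\}$ for all but finitely many $n$.

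The main obstacle is exhibiting such an $F$ of size exactly $\textsf{cov}(\mathcal{M})$. That the least such $|F|$ is $\geq\textsf{cov}(\mathcal{M})$ is a category estimate: for fixed $f_{0},\dots,f_{k}$ the set of $\sigma$ it ``catches'' is meager, so covering $\omega^{\omega}$ requires at least $\textsf{cov}(\mathcal{M})$ many pieces. The bound $\leq\textsf{cov}(\mathcal{M})$ is a form of the combinatorial (chopped-real) description of $\textsf{cov}(\mathcal{M})$; should the column presentation fail to be tight, one instead works with an increasing interval partition $P$, the ideals $\mathcal{ED}_{P}$, and the characterisation of $\textsf{cov}(\mathcal{M})$ in terms of selectors of $P$ (which, via Proposition on the isomorphism of the $\mathcal{ED}_{P}$'s and Theorem~\ref{cosas nonED}, is the route of Brendle--Fla\v{s}kov\'{a}). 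Balancing ``enough generators to force $\mathcal{ED}\leq_{\textsf{K}}\mathcal{J}$'' against ``few enough that $\mathcal{J}$ stays proper of cofinality $\textsf{cov}(\mathcal{M})$'' is the delicate point I expect to require the most care.
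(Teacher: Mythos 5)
This proposition is imported from \cite{BrendleFlaskova}; the paper gives no proof of it, so there is nothing in the text to compare your argument against, and I judge it on its own terms. Your proof of item 2 is correct: if $\mathcal{ED}_{\text{\textsf{fin}}}\leq_{\text{\textsf{KB}}}\mathcal{J}$ then $\mathcal{J}$ is tall (because $\mathcal{ED}_{\text{\textsf{fin}}}$ is tall and the witnessing map is finite-to-one; note that your stated reason, ``not \textsf{Q}$^{+}$, hence tall'', is not a valid implication for general ideals, but the conclusion you need does follow from the global Kat\v{e}tov--Blass hypothesis), and then Lemma \ref{Lema Katetov basico}(5), the inequality \textsf{non}$^{\ast}(\mathcal{J})\leq$ \textsf{cof}$(\mathcal{J})$, and the identification \textsf{non}$($\textsf{Q}$^{+})=\mathfrak{ge}(\mathcal{ED}_{\text{\textsf{fin}}})$ finish the argument. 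For item 1, the inequality \textsf{cov}$(\mathcal{M})\leq$ \textsf{non}$($\textsf{WS}$)$ via $+$-Ramseyness, and the case \textsf{cov}$(\mathcal{M})=\mathfrak{d}$ of the converse via \textsf{fin}$\times$\textsf{fin}, are both fine.

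The genuine gap is in the remaining case \textsf{cov}$(\mathcal{M})=$ \textsf{non}$^{\ast}(\mathcal{ED}_{\text{\textsf{fin}}})<\mathfrak{d}$ of \textsf{non}$($\textsf{WS}$)\leq$ \textsf{cov}$(\mathcal{M})$: the family $F$ you ask for cannot exist there. If every $\sigma\in\omega^{\omega}$ satisfies $\sigma(n)\in\{f_{0}(n),\dots,f_{k}(n)\}$ for almost all $n$ for some finite $\{f_{0},\dots,f_{k}\}\subseteq F$, then the functions $n\mapsto\max\{f(n)\mid f\in F_{0}\}$, for $F_{0}\in\left[  F\right]  ^{<\omega}$, form a dominating family of size $\left\vert F\right\vert $, so necessarily $\left\vert F\right\vert \geq\mathfrak{d}>$ \textsf{cov}$(\mathcal{M})$. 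Hence the ideal generated by the columns together with the graphs of $F$ can contain all graphs only when $\left\vert F\right\vert \geq\mathfrak{d}$, and the case at hand is exactly the one in which this column presentation is useless. Your fallback remark --- pass to an increasing interval partition $P$, the ideals $\mathcal{ED}_{P}$, and Theorem \ref{cosas nonED}(2) --- points in the right direction, but it is only a gesture: one still has to manufacture, from a family of \textsf{non}$^{\ast}(\mathcal{ED}_{\text{\textsf{fin}}})$ many partial selectors of $P$ that no total selector meets infinitely often, a \emph{proper} ideal of that cofinality containing every selector of $P$, and this is essentially the whole substantive content of the inequality being proved. So one half of item 1 remains unestablished.
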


The following was conjecture in \cite{BrendleFlaskova}:

\begin{conjecture}
[Brendle, Fla\v{s}kov\'{a}]The cardinals \textsf{non}*$(\mathcal{ED}%
_{\text{\textsf{fin}}})$ and \textsf{non}$($\textsf{Q}$^{+})$ are equal.
\end{conjecture}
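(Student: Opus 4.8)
The inequality $\textsf{non}^{\ast}(\mathcal{ED}_{\text{\textsf{fin}}})\leq\textsf{non}(\textsf{Q}^{+})$ is Proposition \ref{non weakly selective}(2), so the whole content is the reverse inequality $\textsf{non}(\textsf{Q}^{+})\leq\textsf{non}^{\ast}(\mathcal{ED}_{\text{\textsf{fin}}})$. Recall that we have shown $\textsf{non}(\textsf{Q}^{+})=\mathfrak{ge}(\mathcal{ED}_{\text{\textsf{fin}}})=\textsf{min}\{\textsf{cof}(\mathcal{J})\mid\mathcal{ED}_{\text{\textsf{fin}}}\subseteq\mathcal{J}\}$. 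So, writing $\kappa=\textsf{non}^{\ast}(\mathcal{ED}_{\text{\textsf{fin}}})$, the plan is to build an ideal $\mathcal{J}$ on $\triangle$ with $\mathcal{ED}_{\text{\textsf{fin}}}\subseteq\mathcal{J}$ and $\textsf{cof}(\mathcal{J})\leq\kappa$, obtained from a carefully chosen witness for $\textsf{non}^{\ast}(\mathcal{ED}_{\text{\textsf{fin}}})$.

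The first step is to isolate exactly what is needed. Suppose $\mathcal{H}=\{H_{\alpha}\mid\alpha<\kappa\}\subseteq[\triangle]^{\omega}$ is a family such that (a) every selector of $\triangle$ (i.e. every graph of $g$ with $g(n)\leq n$) is almost disjoint from some $H_{\alpha}$, and (b) $\mathcal{H}$ has the strong finite intersection property. Put $\mathcal{J}=\{A\subseteq\triangle\mid A\cap\bigcap T$ is finite for some finite $T\subseteq\mathcal{H}\}$. Then $\mathcal{J}$ is an ideal (closure under subsets is trivial; for unions, intersect the witnessing $T$'s), it is proper and free because (b) keeps every $\bigcap T$ infinite, it contains every selector by (a) with $T$ a singleton — hence $\mathcal{ED}_{\text{\textsf{fin}}}\subseteq\mathcal{J}$, since $\mathcal{ED}_{\text{\textsf{fin}}}$ is generated by selectors — and the family $\{(\triangle\setminus\bigcap T)\cup u\mid T\in[\mathcal{H}]^{<\omega},\ u\in[\triangle]^{<\omega}\}$ is cofinal in $\mathcal{J}$ of size $\leq\kappa$. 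Conversely, any $\mathcal{J}\supseteq\mathcal{ED}_{\text{\textsf{fin}}}$ with a directed cofinal family $\mathcal{D}$ yields such an $\mathcal{H}$, namely $\{\triangle\setminus D\mid D\in\mathcal{D}\}$, since $\mathcal{J}\supseteq\mathcal{ED}_{\text{\textsf{fin}}}$ forces every $D\in\mathcal{D}$ to be coinfinite (cover $\triangle\setminus D$ by finitely many selectors). Thus the theorem is in fact equivalent to the statement that $\textsf{non}^{\ast}(\mathcal{ED}_{\text{\textsf{fin}}})$ is witnessed by an \emph{SFIP} family of minimal size, and all the difficulty is concentrated there; the verifications in this paragraph and the translation through $\textsf{non}(\textsf{Q}^{+})=\mathfrak{ge}(\mathcal{ED}_{\text{\textsf{fin}}})$ are routine.

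The main obstacle is therefore producing a witness with property (b). An arbitrary $\textsf{non}^{\ast}$-witness need not be SFIP, and the naive repairs all fail: enlarging the $H_{\alpha}$ destroys their almost-disjointness with selectors; shrinking them cannot undo a finite or empty intersection of two members; and splitting each $H_{\alpha}$ into finitely many ``thin'' pieces only controls finite subfamilies of bounded size, whereas the SFIP constraint concerns finite subfamilies of all sizes. To construct $\mathcal{H}$ I would invoke Theorem \ref{cosas nonED}(2): by minimality of $\kappa$ there are an increasing interval partition $P=\{P_{n}\mid n\in\omega\}$ and a family $\{B_{\alpha}\mid\alpha<\kappa\}$ of partial infinite selectors of $P$ admitting no total selector meeting every $B_{\alpha}$ infinitely; so $\{B_{\alpha}\}$ already has property (a) relative to $\mathcal{ED}_{P}\cong\mathcal{ED}_{\text{\textsf{fin}}}$, but (since partial selectors themselves lie in $\mathcal{ED}_{P}$) it fails (b) badly. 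I would then fix an interval partition $Q$ — a suitable coarsening or refinement of $P$ — and build $\mathcal{H}=\{H_{\xi}\mid\xi<\kappa\}$ by recursion of length $\kappa$, maintaining at every stage $\xi$ that $\{H_{\eta}\mid\eta\leq\xi\}$ is SFIP while choosing each $H_{\xi}$, from the data $\{B_{\alpha}\}$, so as to ``capture'' an entire $\mathfrak{c}$-sized block of selectors of $Q$ (a single set of the form ``complement of finitely many partial selectors'' kills all selectors almost contained in that finite union, so one set can absorb $\mathfrak{c}$ selectors at once). The delicate point — and where essentially all the work lies — is designing $Q$ and the bookkeeping so that maintaining the strong finite intersection property of a growing $\kappa$-sequence of $\mathcal{ED}_{Q}$-positive sets is compatible with capturing every selector by the end; this is the step I expect to require the real combinatorial ideas, the rest being bookkeeping.
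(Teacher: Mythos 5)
This statement is labelled a \emph{Conjecture}, and the paper says explicitly that it ``still remains unsolved''; there is no proof in the paper to compare yours against, and a correct argument would be a new theorem. Your proposal does not supply one. The part you actually carry out is correct but is only a reformulation: using $\textsf{non}(\textsf{Q}^{+})=\mathfrak{ge}(\mathcal{ED}_{\text{\textsf{fin}}})$ (already in the paper) you observe that an ideal containing $\mathcal{ED}_{\text{\textsf{fin}}}$ of cofinality $\kappa$ is the same thing as a family $\mathcal{H}\subseteq\left[\triangle\right]^{\omega}$ of size $\kappa$ with the strong finite intersection property such that every selector is almost disjoint from some member of $\mathcal{H}$. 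That equivalence is fine, but it merely relocates the conjecture; it does not lower its difficulty, since the reverse inequality $\textsf{non}(\textsf{Q}^{+})\leq\textsf{non}^{\ast}(\mathcal{ED}_{\text{\textsf{fin}}})$ is exactly the assertion that such an SFIP family of size $\textsf{non}^{\ast}(\mathcal{ED}_{\text{\textsf{fin}}})$ exists.

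The genuine gap is precisely the step you defer in your last paragraph. From the minimality of $\kappa=\textsf{non}^{\ast}(\mathcal{ED}_{\text{\textsf{fin}}})$, Theorem \ref{cosas nonED}(2) hands you a family $\{B_{\alpha}\mid\alpha<\kappa\}$ of \emph{partial selectors} of some increasing interval partition; these sets lie in $\mathcal{ED}_{P}$ itself, whereas the sets $H_{\xi}$ you need must have all their finite intersections infinite and must each absorb blocks of selectors, i.e.\ they must be built from the $B_{\alpha}$'s by a mechanism you never describe. No choice of the partition $Q$, no recursion invariant, and no argument that a single $H_{\xi}$ can ``capture'' $\mathfrak{c}$-many selectors while preserving SFIP against all $\kappa$-many previously chosen sets is given; you yourself flag this as ``the step I expect to require the real combinatorial ideas.'' Since that step is the entire content of the Brendle--Fla\v{s}kov\'{a} conjecture (the routine part being the translation through $\mathfrak{ge}$), the proposal as written proves nothing beyond an equivalent restatement and cannot be accepted as a proof.
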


\qquad\ \ \ \ \ 

The conjecture still remains unsolved. These two cardinal invariants certain
look very similar, for example, we have the following:

\begin{corollary}
\textsf{cov}$\left(  \mathcal{M}\right)  =$ \textsf{min}$\mathsf{\{}%
\mathfrak{d},$ \textsf{non}*$(\mathcal{ED}_{\text{\textsf{fin}}})\}=$
\textsf{min}$\mathsf{\{}\mathfrak{d},$ \textsf{non}$($\textsf{Q}$^{+})\}.$
\end{corollary}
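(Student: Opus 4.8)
The plan is to derive this corollary directly from the two results immediately preceding it, namely Theorem \ref{cosas nonED}(1) and Proposition \ref{non weakly selective}(2), together with the already-established identities for generic existence numbers. First I would recall that by Theorem \ref{cosas nonED}(1) we have $\textsf{cov}(\mathcal{M}) = \textsf{min}\{\mathfrak{d}, \textsf{non}^{*}(\mathcal{ED}_{\textsf{fin}})\}$, which is the first of the two equalities claimed. So the entire content of the corollary reduces to proving $\textsf{min}\{\mathfrak{d}, \textsf{non}^{*}(\mathcal{ED}_{\textsf{fin}})\} = \textsf{min}\{\mathfrak{d}, \textsf{non}(\textsf{Q}^{+})\}$.

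For that equality I would argue by two inequalities. For the $\leq$ direction: by Proposition \ref{non weakly selective}(2) we have $\textsf{non}^{*}(\mathcal{ED}_{\textsf{fin}}) \leq \textsf{non}(\textsf{Q}^{+})$, so taking the minimum with $\mathfrak{d}$ on both sides preserves the inequality, giving $\textsf{min}\{\mathfrak{d}, \textsf{non}^{*}(\mathcal{ED}_{\textsf{fin}})\} \leq \textsf{min}\{\mathfrak{d}, \textsf{non}(\textsf{Q}^{+})\}$. For the $\geq$ direction I would use the chain of identifications established just before the statement: $\textsf{non}(\textsf{Q}^{+}) = \mathfrak{ge}(\mathcal{ED}_{\textsf{fin}})$, and by the Lemma on generic existence numbers, $\mathfrak{ge}(\mathcal{ED}_{\textsf{fin}}) = \textsf{min}\{\textsf{cof}(\mathcal{J}) \mid \mathcal{ED}_{\textsf{fin}} \leq_{\textsf{K}} \mathcal{J}\}$. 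Now $\mathcal{ED}_{\textsf{fin}} \leq_{\textsf{KB}} \textsf{fin}\times\textsf{fin}$ and $\textsf{cof}(\textsf{fin}\times\textsf{fin}) = \mathfrak{d}$, so $\textsf{non}(\textsf{Q}^{+}) \leq \mathfrak{d}$; hence $\textsf{min}\{\mathfrak{d}, \textsf{non}(\textsf{Q}^{+})\} = \textsf{non}(\textsf{Q}^{+})$. Likewise, since $\textsf{fin}\times\textsf{fin}$ has cofinality $\mathfrak{d}$ and is not $\textsf{P}^{-}$ (hence $\mathfrak{d}$ bounds many of these invariants), one checks $\textsf{min}\{\mathfrak{d}, \textsf{non}^{*}(\mathcal{ED}_{\textsf{fin}})\} = \textsf{cov}(\mathcal{M}) \leq \textsf{non}(\textsf{WS}) = \textsf{non}(\textsf{Q}^{+})$ would go the wrong way, so instead I would note $\textsf{cov}(\mathcal{M}) \leq \textsf{non}^{*}(\mathcal{ED}_{\textsf{fin}}) \leq \textsf{non}(\textsf{Q}^{+})$ and $\textsf{cov}(\mathcal{M}) \leq \mathfrak{d}$, and combine with $\textsf{non}(\textsf{Q}^{+}) \leq \mathfrak{d}$ to get $\textsf{min}\{\mathfrak{d}, \textsf{non}(\textsf{Q}^{+})\} = \textsf{non}(\textsf{Q}^{+}) \geq \textsf{cov}(\mathcal{M}) = \textsf{min}\{\mathfrak{d}, \textsf{non}^{*}(\mathcal{ED}_{\textsf{fin}})\}$.

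Putting the two inequalities together yields $\textsf{min}\{\mathfrak{d}, \textsf{non}^{*}(\mathcal{ED}_{\textsf{fin}})\} = \textsf{min}\{\mathfrak{d}, \textsf{non}(\textsf{Q}^{+})\}$, and combined with Theorem \ref{cosas nonED}(1) this gives the full chain of equalities. I do not expect any serious obstacle here: the corollary is essentially a bookkeeping consequence of the cited results, and the only thing to be careful about is the direction of the inequality $\textsf{non}^{*}(\mathcal{ED}_{\textsf{fin}}) \leq \textsf{non}(\textsf{Q}^{+})$ from Proposition \ref{non weakly selective}(2) and the observation that $\textsf{non}(\textsf{Q}^{+}) = \mathfrak{ge}(\mathcal{ED}_{\textsf{fin}}) \leq \textsf{cof}(\textsf{fin}\times\textsf{fin}) = \mathfrak{d}$, which allows the $\mathfrak{d}$ in the second minimum to be absorbed. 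The mild subtlety, if any, is realizing that one never actually needs the (open) conjecture that $\textsf{non}^{*}(\mathcal{ED}_{\textsf{fin}}) = \textsf{non}(\textsf{Q}^{+})$: the potential gap between these two cardinals lies entirely above $\mathfrak{d}$, so it disappears after taking the minimum with $\mathfrak{d}$.
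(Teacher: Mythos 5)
There is a genuine gap, and it sits exactly where the real content of the corollary lies. Your first equality and your ``$\leq$'' direction are fine: $\textsf{cov}(\mathcal{M})=\min\{\mathfrak{d},\textsf{non}^{*}(\mathcal{ED}_{\text{\textsf{fin}}})\}$ is Theorem \ref{cosas nonED}(1), and $\textsf{non}^{*}(\mathcal{ED}_{\text{\textsf{fin}}})\leq\textsf{non}(\textsf{Q}^{+})$ gives $\min\{\mathfrak{d},\textsf{non}^{*}(\mathcal{ED}_{\text{\textsf{fin}}})\}\leq\min\{\mathfrak{d},\textsf{non}(\textsf{Q}^{+})\}$. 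The problems are in the other direction. First, your key step $\mathcal{ED}_{\text{\textsf{fin}}}\leq_{\text{\textsf{KB}}}\textsf{fin}\times\textsf{fin}$ is false: the paper's lemma on the basic Katětov relations states that $\textsf{fin}\times\textsf{fin}$, $\mathcal{ED}_{\text{\textsf{fin}}}$ and \textsf{nwd} are pairwise Kat\v{e}tov incomparable (what is true is $\mathcal{ED}\leq_{\text{\textsf{KB}}}\textsf{fin}\times\textsf{fin}$ and $\mathcal{ED}\leq_{\text{\textsf{KB}}}\mathcal{ED}_{\text{\textsf{fin}}}$). So $\mathfrak{ge}(\mathcal{ED}_{\text{\textsf{fin}}})\leq\textsf{cof}(\textsf{fin}\times\textsf{fin})=\mathfrak{d}$ does not follow, and $\textsf{non}(\textsf{Q}^{+})\leq\mathfrak{d}$ is not available; indeed, if that bound were a ZFC theorem, the corollary would simply read $\textsf{cov}(\mathcal{M})=\textsf{non}(\textsf{Q}^{+})$ and, combined with $\textsf{cov}(\mathcal{M})\leq\textsf{non}^{*}(\mathcal{ED}_{\text{\textsf{fin}}})\leq\textsf{non}(\textsf{Q}^{+})$, would settle the Brendle--Fla\v{s}kov\'a conjecture. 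Second, even granting that bound, your concluding chain ends with $\min\{\mathfrak{d},\textsf{non}(\textsf{Q}^{+})\}\geq\textsf{cov}(\mathcal{M})=\min\{\mathfrak{d},\textsf{non}^{*}(\mathcal{ED}_{\text{\textsf{fin}}})\}$, which is the same inequality as your ``$\leq$'' direction; the substantive half, $\min\{\mathfrak{d},\textsf{non}(\textsf{Q}^{+})\}\leq\textsf{cov}(\mathcal{M})$, is never proved, since nothing in your argument bounds $\textsf{non}(\textsf{Q}^{+})$ (or its minimum with $\mathfrak{d}$) from above by $\textsf{cov}(\mathcal{M})$.

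The missing ingredient is item (1) of Proposition \ref{non weakly selective}, $\textsf{non}(\textsf{WS})=\textsf{cov}(\mathcal{M})$, which you never use. Since an ideal is weakly selective if and only if it is both \textsf{P}$^{-}$ and \textsf{Q}$^{+}$, one has $\textsf{non}(\textsf{WS})=\min\{\textsf{non}(\textsf{P}^{-}),\textsf{non}(\textsf{Q}^{+})\}$, and $\textsf{non}(\textsf{P}^{-})=\mathfrak{d}$ was proved earlier in the section; hence $\textsf{cov}(\mathcal{M})=\textsf{non}(\textsf{WS})=\min\{\mathfrak{d},\textsf{non}(\textsf{Q}^{+})\}$, which together with Theorem \ref{cosas nonED}(1) is the whole proof. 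Your closing remark that the possible gap between $\textsf{non}^{*}(\mathcal{ED}_{\text{\textsf{fin}}})$ and $\textsf{non}(\textsf{Q}^{+})$ lies above $\mathfrak{d}$ is a correct consequence of the corollary, but it is not a step toward proving it.
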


\begin{proof}
We already know \textsf{cov}$\left(  \mathcal{M}\right)  =$ \textsf{min}%
$\mathsf{\{}\mathfrak{d},$ \textsf{non}*$(\mathcal{ED}_{\text{\textsf{fin}}%
})\}$ by Theorem \ref{cosas nonED}. To see the other equality, just note that
\textsf{non}$($\textsf{WS}$)$ is the minimum between \textsf{non}$($%
\textsf{Q}$^{+})$ and \textsf{non}$($\textsf{P}$^{-}).$\qquad\ \ \ \ 
\end{proof}

\section{The Category Dichotomy for Maximal ideals \label{seccion ultrafiltro}%
}

It is very easy to characterize when the Category Dichotomy holds for maximal
ideals (duals of ultrafilters).

\begin{proposition}
The following are equivalent:

\begin{enumerate}
\item The Category Dichotomy for the class of maximal ideals.

\item There are no Ramsey ultrafilters.
\end{enumerate}
\end{proposition}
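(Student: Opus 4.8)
The plan is to prove both implications using the combinatorial characterizations of the Kat\v{e}tov order from Table 2, specialized to maximal ideals. The key structural fact I would exploit is that a maximal ideal $\mathcal{I}=\mathcal{U}^{\ast}$ is \emph{homogeneous}: every restriction $\mathcal{I}\upharpoonright X$ with $X\in\mathcal{I}^{+}$ (i.e.\ $X\in\mathcal{U}$) is isomorphic to $\mathcal{I}$ itself, via a bijection of $X$ with $\omega$ that transports $\mathcal{U}\upharpoonright X$ to $\mathcal{U}$. Consequently, for a maximal ideal the statement ``there is $X\in\mathcal{I}^{+}$ with $\mathcal{ED}\leq_{\text{\textsf{K}}}\mathcal{I}\upharpoonright X$'' is equivalent to ``$\mathcal{ED}\leq_{\text{\textsf{K}}}\mathcal{I}$'' itself, which by Table 2 (the row ``$\mathcal{ED}\nleq_{\text{\textsf{K}}}\mathcal{I}\upharpoonright X$ for all $X\in\mathcal{I}^{+}$ iff $\mathcal{I}$ is weakly selective'') is equivalent to $\mathcal{U}^{\ast}$ not being weakly selective. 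I would also note that a maximal ideal is never Kat\v{e}tov below \textsf{nwd}: by Table 2, $\mathcal{I}\nleq_{\text{\textsf{K}}}$ \textsf{nwd} iff $\mathcal{I}$ is Cohen indestructible, and $\mathcal{U}^{\ast}$ is Cohen indestructible for \emph{every} ultrafilter $\mathcal{U}$ (an ultrafilter stays an ultrafilter base generating a tall ideal after forcing, since its dual is $\omega$-hitting and no countable sequence of reals in the extension can be captured — more directly, any ultrafilter is Cohen-indestructibly tall because a Cohen real over $V$ still meets every ground-model set in $\mathcal{U}$ in an infinite set along with its complement being small, so tallness is preserved). Thus for maximal ideals alternative (1) of the dichotomy always fails, and the Category Dichotomy for this class reduces to: \emph{every} maximal ideal is weakly selective, equivalently $\mathcal{ED}\leq_{\text{\textsf{K}}}\mathcal{U}^{\ast}$ has a restriction-witness.

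Next I would connect weak selectivity of $\mathcal{U}^{\ast}$ to selectivity of $\mathcal{U}$. For a maximal ideal, being weakly selective ($=$ \textsf{P}$^{-}$ and \textsf{Q}$^{+}$, by the observations after the definitions) is equivalent to being \emph{selective} ($=$ \textsf{P}$^{+}$ and \textsf{Q}$^{+}$): for ultrafilters, \textsf{P}$^{-}$, \textsf{P}$^{+}$ and the \textsf{P}-ideal property all coincide, and \textsf{Q}$^{+}$ for a maximal ideal is the same as every finite-to-one partition having a selector in $\mathcal{U}$, which is the classical \textsf{Q}-point condition; combined these say precisely that $\mathcal{U}$ is a Ramsey ultrafilter (its dual is selective). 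So: a maximal ideal satisfies alternative (2) of the dichotomy iff $\mathcal{U}^{\ast}$ is \emph{not} weakly selective iff $\mathcal{U}$ is \emph{not} a Ramsey ultrafilter. Putting this together, the Category Dichotomy holds for the class of maximal ideals iff every ultrafilter fails to be Ramsey, i.e.\ iff there are no Ramsey ultrafilters — which is exactly the claimed equivalence.

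For the two implications written out: \textbf{(1)$\Rightarrow$(2).} Suppose, toward a contradiction, that $\mathcal{U}$ is a Ramsey ultrafilter. Then $\mathcal{U}^{\ast}$ is selective, hence weakly selective, so by Table 2 there is no $X\in(\mathcal{U}^{\ast})^{+}$ with $\mathcal{ED}\leq_{\text{\textsf{K}}}\mathcal{U}^{\ast}\upharpoonright X$; and since $\mathcal{U}^{\ast}$ is Cohen indestructible, $\mathcal{U}^{\ast}\nleq_{\text{\textsf{K}}}$ \textsf{nwd}. Thus the maximal ideal $\mathcal{U}^{\ast}$ violates the Category Dichotomy, contradicting (1). \textbf{(2)$\Rightarrow$(1).} Let $\mathcal{I}=\mathcal{U}^{\ast}$ be an arbitrary maximal ideal. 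By (2) $\mathcal{U}$ is not Ramsey, so $\mathcal{U}^{\ast}$ is not selective, hence (by the ultrafilter-specific equivalences) not weakly selective; by Table 2 there is $X\in\mathcal{I}^{+}$ with $\mathcal{ED}\leq_{\text{\textsf{K}}}\mathcal{I}\upharpoonright X$, so alternative (2) of the dichotomy holds for $\mathcal{I}$. Since $\mathcal{I}$ was arbitrary, the Category Dichotomy holds for the class of maximal ideals.

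I expect the main obstacle to be verifying cleanly that a maximal ideal is always Cohen indestructible and that, \emph{for ultrafilters}, the hierarchy of \textsf{P}-type properties collapses so that ``weakly selective $=$ selective $=$ Ramsey-dual''. The first point should follow from a short genericity argument (a Cohen real cannot diagonalize an ultrafilter: for any nice name for an infinite set, density arguments in Cohen forcing produce a ground-model member of $\mathcal{U}$ meeting it infinitely, so tallness persists); the second is folklore but I would want to cite it precisely — \textsf{P}$^{-}$ and \textsf{P}$^{+}$ coincide for maximal ideals because any decreasing sequence of positive sets is, modulo the ultrafilter, a decreasing sequence in $\mathcal{U}$, and the \textsf{P}-point property is exactly closure under pseudointersections there. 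Everything else is a direct application of the characterizations already tabulated in the excerpt, so no genuinely new combinatorics is needed.
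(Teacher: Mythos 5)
Your overall skeleton is the same as the paper's: show that alternative (1) of the dichotomy can never hold for a maximal ideal, and observe that alternative (2) holds for $\mathcal{U}^{\ast}$ exactly when $\mathcal{U}$ is not Ramsey (the paper simply cites the fact that $\mathcal{U}$ is Ramsey iff $\mathcal{U}^{\ast}$ is not Kat\v{e}tov above $\mathcal{ED}$; your unfolding of this via the Table 2 row on restrictions, the collapse \textsf{P}$^{-}=$\textsf{P}$^{+}=$\ \textsf{P}-point for maximal ideals, and the homogeneity of $\mathcal{U}^{\ast}$ under restriction is correct, if more than is strictly needed, since the Table 2 row already quantifies over all positive sets). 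Where you genuinely diverge is the first half: you argue that $\mathcal{U}^{\ast}\nleq_{\text{\textsf{K}}}$ \textsf{nwd} via the Table 2 equivalence with Cohen indestructibility, whereas the paper gets it at once from Proposition \ref{Prop preimagen analiticos} (\textsf{nwd} is Borel, so any Kat\v{e}tov predecessor of it is meager, and a maximal ideal is never meager) or, alternatively, from Lemma \ref{Lema tight no abajo} (duals of ultrafilters are $\omega$-hitting, hence tight, and tight ideals are not below \textsf{nwd}). Your route buys nothing extra here and shifts the burden onto a forcing fact.

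That forcing fact is where your sketch has a real problem. ``Every maximal ideal is Cohen indestructible'' is true, but the two justifications you offer do not establish it. The remark that the Cohen real itself meets every ground-model member of $\mathcal{U}$ (and its complement) infinitely is irrelevant: tallness in the extension is a statement about \emph{all} new infinite sets, not about the Cohen real. And your proposed density argument aims at the wrong target: producing a ground-model member of $\mathcal{U}$ meeting a new infinite set $\dot{X}$ infinitely is trivial and useless; what is needed is a ground-model member of $\mathcal{U}^{\ast}$ meeting $\dot{X}$ infinitely, equivalently that $\dot{X}$ is not a pseudointersection of $\mathcal{U}$, and the naive ``one element per condition'' density construction does not control on which side of the ultrafilter the chosen elements fall. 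A correct argument exists --- e.g., the ground-model reals remain non-meager after Cohen forcing, any non-meager family is splitting, and a ground-model splitter $S$ of $\dot{X}$ gives a set among $S,\omega\setminus S$ lying in $\mathcal{U}^{\ast}$ and meeting $\dot{X}$ infinitely --- but as written this step is a gap, and it is exactly the step the paper's two cited lemmas let you avoid entirely.
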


\begin{proof}
An ultrafilter $\mathcal{U}$ is Ramsey if and only if $\mathcal{U}^{\ast}$ is
not Kat\v{e}tov above $\mathcal{ED}$. In this way, in order to prove the
proposition, it is enough to show that no maximal ideal can be Kat\v{e}tov
below \textsf{nwd. }In fact, by Proposition \ref{Prop preimagen analiticos},
the dual of an ultrafilter is not Kat\v{e}tov below any analytic ideal
(alternatively, we could use Lemma \ref{Lema tight no abajo} below).

\qquad\qquad\qquad\ \ \ \ \qquad\qquad\qquad
\end{proof}

It is very easy to build Ramsey ultrafilters with \textsf{CH }and a classic
theorem of Kunen is that it is consistent that they do not exist (see
\cite{KunenSomePoints}). In this way, the Category Dichotomy for the class of
duals of ultrafilters is independent. The reader may consult
\cite{ProperandImproper}, \cite{Rat}, \cite{NoQPOintsLaverModel},
\cite{ShelahNowheredense}, \cite{Brendlenowheredense},
\cite{TherearenoPpointsinSilverExtensions} and \cite{AboveFsigma} to learn
more about models where certain ultrafilters do not exist.

\section{The Category Dichotomy for MAD families \label{seccion MAD}}

We say $\mathcal{A}\subseteq\left[  \omega\right]  ^{\omega}$ is \emph{almost
disjoint (\textsf{AD}) }if the intersection of any two different elements of
$\mathcal{A}$ is finite. A \textsf{MAD }\emph{family}\textsf{\emph{ }}is
a\textsf{ }maximal almost disjoint family. We denote by $\mathcal{I(A)}$ the
ideal generated by $\mathcal{A}.$ A \textsf{MAD }family is \emph{Cohen
indestructible }if it remains maximal after adding a Cohen real, which is
equivalent that its ideal is not Kat\v{e}tov below \textsf{nwd. }We now prove
the following:

\begin{proposition}
The following are equivalent:

\begin{enumerate}
\item The Category Dichotomy for the class of ideals generated by \textsf{MAD
}families.

\item There are no Cohen indestructible \textsf{MAD }families.
\end{enumerate}
\end{proposition}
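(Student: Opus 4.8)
The plan is to argue both implications by contraposition, using the characterizations from Table 2. For (2)$\Rightarrow$(1): suppose the Category Dichotomy fails for the class of ideals generated by \textsf{MAD} families, so there is a \textsf{MAD} family $\mathcal{A}$ with $\mathcal{I(A)}\nleq_{\text{\textsf{K}}}$ \textsf{nwd} and such that $\mathcal{ED}\nleq_{\text{\textsf{K}}}\mathcal{I(A)}\upharpoonright X$ for every $X\in\mathcal{I(A)}^{+}$. By Table 2, the first condition says $\mathcal{A}$ is Cohen indestructible, which already gives (2). (In fact the second condition is automatic for \textsf{MAD} ideals and need not be used here.) Conversely, for (1)$\Rightarrow$(2): suppose $\mathcal{A}$ is a Cohen indestructible \textsf{MAD} family. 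Then $\mathcal{I(A)}\nleq_{\text{\textsf{K}}}$ \textsf{nwd}, so alternative 1 of the dichotomy fails for $\mathcal{I(A)}$. To conclude that the dichotomy fails for this class, it remains to rule out alternative 2, i.e., to show there is no $X\in\mathcal{I(A)}^{+}$ with $\mathcal{ED}\leq_{\text{\textsf{K}}}\mathcal{I(A)}\upharpoonright X$.

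The crux, then, is the following standing fact: for any \textsf{MAD} family $\mathcal{A}$ and any $X\in\mathcal{I(A)}^{+}$, one has $\mathcal{ED}\nleq_{\text{\textsf{K}}}\mathcal{I(A)}\upharpoonright X$; equivalently, by Table 2, $\mathcal{I(A)}$ is weakly selective. First I would observe that $\mathcal{I(A)}\upharpoonright X$ is again (up to the obvious identification) the ideal generated by a \textsf{MAD} family on $X$ — namely $\{A\cap X : A\in\mathcal{A}, |A\cap X|=\omega\}$ — so it suffices to prove $\mathcal{ED}\nleq_{\text{\textsf{K}}}\mathcal{I(A)}$ for every \textsf{MAD} $\mathcal{A}$. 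By the relevant row of Table 2, this amounts to: every partition $\mathcal{P}=\{P_n : n\in\omega\}$ of $\omega$ into sets lying in $\mathcal{I(A)}$ admits a selector in $\mathcal{I(A)}^{+}$. Given such a partition, each $P_n$, being in $\mathcal{I(A)}$, is covered mod finite by finitely many members of $\mathcal{A}$; the point is to build a selector $S$ (one point from each $P_n$) that is $\mathcal{I(A)}$-positive, i.e. that meets infinitely many members of $\mathcal{A}$ in an infinite set — or, more simply, that is not covered mod finite by finitely many elements of $\mathcal{A}$. One constructs $S$ recursively: having committed finitely many points, and having "used up" finitely many members $A_0,\dots,A_{k-1}$ of $\mathcal{A}$, pick the next index $n$ large enough and a point of $P_n$ lying outside $A_0\cup\dots\cup A_{k-1}$ (possible since $P_n\notin\mathcal{I(A)}$... wait, $P_n\in\mathcal{I(A)}$); one instead diagonalizes against the possibility that $S$ is eventually contained in $A_0\cup\dots\cup A_{k-1}$ by, at stage $k$, finding some $P_n$ with a point outside that union and adding that point, while also ensuring cofinally often we step outside any prescribed finite subfamily. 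Since $\mathcal{A}$ is infinite and almost disjoint, no single $P_n$ (in $\mathcal{I(A)}$) can block this, and an infinite fair bookkeeping produces a selector meeting infinitely many elements of $\mathcal{A}$ infinitely, hence in $\mathcal{I(A)}^{+}$.

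I expect the main obstacle to be exactly this recursive construction of the positive selector: one must simultaneously respect the partition structure (exactly one point per $P_n$, eventually every $P_n$ handled) and the positivity requirement against $\mathcal{I(A)}$, and care is needed because the $P_n$ are themselves small (in $\mathcal{I(A)}$), so positivity of $S$ must come from the \emph{spread} of $S$ across columns rather than from any single column. A clean way to package it: enumerate $\mathcal{A}=\{A_n : n\in\omega\}$ and, for each $k$, arrange that $S$ contains a point from $A_k\setminus(A_0\cup\dots\cup A_{k-1})$ lying in some not-yet-used $P_n$ (such a point exists because $A_k\setminus(A_0\cup\dots\cup A_{k-1})$ is infinite by almost-disjointness and the cofinitely many not-yet-used $P_n$ still partition a cofinite piece of it into sets in $\mathcal{I(A)}$, so infinitely many such $P_n$ meet it). This guarantees $S\cap A_k$ is infinite for infinitely many $k$ — actually $S$ meets each $A_k$, which is already enough for $S\in\mathcal{I(A)}^{+}$ when $\mathcal{A}$ is \textsf{AD}, since a set meeting every element of an infinite \textsf{AD} family cannot be covered mod finite by finitely many of them. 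Once this lemma is in hand, both implications of the proposition follow immediately as sketched above.
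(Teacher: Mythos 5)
Your high-level structure is the same as the paper's: Cohen indestructibility of $\mathcal{A}$ is, by the definition recalled just before the proposition, equivalent to $\mathcal{I(A)}\nleq_{\text{\textsf{K}}}$ \textsf{nwd}, so everything reduces to the key fact that no restriction of $\mathcal{I(A)}$ is Kat\v{e}tov above $\mathcal{ED}$ (equivalently, $\mathcal{I(A)}$ is weakly selective); your remark that $\mathcal{I(A)}\upharpoonright X$ is again generated by a \textsf{MAD} family on $X$ is also correct. The paper does not reprove this key fact: it quotes Mathias (\emph{Happy families}) that $\mathcal{I(A)}$ is selective. The genuine gap is in your attempted proof of it. Concretely: (i) a \textsf{MAD} family is uncountable, so you cannot enumerate $\mathcal{A}=\{A_n\mid n\in\omega\}$; at best you diagonalize against a countable subfamily, and positivity of $S$ must be checked against \emph{every} finite $F\subseteq\mathcal{A}$, including members you never listed. (ii) Your closing claim --- that a set meeting every element of an infinite \textsf{AD} family cannot be covered mod finite by finitely many of them --- is false: the single point of $S\cap A_k$ may lie in the (finite but possibly nonempty) intersection $A_k\cap B$ for some fixed $B\in\mathcal{A}$, so all your chosen points could land inside one member $B$, giving $S\in\mathcal{I(A)}$ even though $S$ meets each $A_k$. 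Meeting each $A_k$ in one point is simply not enough; what suffices is meeting infinitely many members in an \emph{infinite} set, and your construction yields only one point per $A_k$, so the assertion ``$S\cap A_k$ is infinite for infinitely many $k$'' is not delivered either. (iii) Even the single existential step can fail as written: the pieces $P_n$ are infinite sets in $\mathcal{I(A)}$, so some $P_n$ may almost contain $A_k$ (e.g.\ $P_n=A_k\cup A_{k+1}$ up to rearrangement); then $A_k\setminus(A_0\cup\dots\cup A_{k-1})$ meets only finitely many pieces, possibly all already used, and no admissible point exists. Your parenthetical justification tacitly treats the $P_n$ as small or finite, which is exactly what they are not in this row of Table 2.

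So the crux --- building a (partial) selector that escapes every finite subfamily of an uncountable \textsf{MAD} family while respecting a partition into infinite ideal sets --- is not routine bookkeeping; it is precisely the content of Mathias's theorem that $\mathcal{I(A)}^{+}$ is a happy family (equivalently, $\mathcal{I(A)}$ is selective, hence weakly selective). Either cite that result, as the paper does, or supply a genuinely correct construction; the one you sketch does not close. The surrounding equivalence argument (both implications, using the \textsf{nwd} row of Table 2) is fine and needs no change.
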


\begin{proof}
It is enough to prove that if $\mathcal{A}$ is a \textsf{MAD }family, then no
restriction of $\mathcal{I(A)}$ is Kat\v{e}tov above $\mathcal{ED}.$ This is
true because $\mathcal{I(A)}$ is selective\textsf{ }see \cite{Happyfamilies},
\cite{RamseySpaces} or the survey \cite{AlmostDisjointFamiliesandTopology}.
Even more is true, no restriction of $\mathcal{I(A)}$ can be Kat\v{e}tov above
a tall analytic ideal. This is because of a theorem of Mathias that an
ultrafilter is Ramsey if and only if it intersects every tall analytic ideal.
\end{proof}

\qquad\qquad\qquad\qquad

While it is consistent that Cohen indestructible \textsf{MAD }families exist,
it is unknown if it is consistent that they do not (this is a famous problem
of Stepr\={a}ns). In this way, the Category Dichotomy for the \textsf{MAD
}families is consistently false, but we do not know if it may consistently be
true. The reader may consult \cite{OrderingMADFamiliesalaKatetov},
\cite{KurilicMAD}, \cite{ForcingIndestructibilityofMADFamilies},
\cite{ForcingwithQuotients} and \cite{SurveyDestructibility} to learn more
about indestructibility of \textsf{MAD }families and ideals.

\section{The Category Dichotomy for P-ideals\label{seccion P ideal}}

The following result is known, we prove it for the sake of completeness:
\label{Lema tight no abajo}

\begin{lemma}
Let $\mathcal{I}$ be an ideal on $\omega.$

\begin{enumerate}
\item If $\mathcal{I}$ is tight, then $\mathcal{I}$ $\nleq_{\text{\textsf{K}}%
}$ \textsf{nwd.}

\item If $\mathcal{I}$ is a \textsf{P}-ideal, then \textsf{fin}$\times
$\textsf{fin }$\nleq_{\text{\textsf{K}}}\mathcal{I}.$
\end{enumerate}
\end{lemma}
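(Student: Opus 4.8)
The plan is to use the combinatorial characterizations from Table 2 in both parts, so the whole lemma reduces to checking that the relevant Kat\v{e}tov relations fail.

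For part (1): by Table 2, $\mathcal{I} \nleq_{\textsf{K}} \textsf{nwd}$ is equivalent to $\mathcal{I}$ being Cohen indestructible, so it suffices to show that every tight ideal is Cohen indestructible. First I would recall what it means for $\mathcal{I}$ to remain tall after adding a Cohen real: we need that for every $\mathbb{C}$-name $\dot{X}$ for an infinite subset of $\omega$ and every condition $p$, there is a ground-model set $Y \in \mathcal{I}$ and a condition $q \le p$ forcing $Y \cap \dot{X}$ to be infinite. The key point is that a Cohen condition deciding finitely much of $\dot{X}$ can always be extended to put one more element into $\dot{X}$; running this over all conditions in the (countable) poset $\mathbb{C} = 2^{<\omega}$, for each $p$ and each $n$ we obtain the set $A_p^n$ of all values that can be forced into $\dot{X}$ above $p$ past stage $n$ — each such set is infinite, hence in $\mathcal{I}^+$. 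Enumerate the countably many sets $\{A_p^n : p \in \mathbb{C}, n \in \omega\} \subseteq \mathcal{I}^+$ and apply tightness (in the "$\ne \emptyset$" reformulation, observation 7 after the definition list) to get $Y \in \mathcal{I}$ meeting every one of them. Then a standard density/genericity argument shows $Y$ has infinite intersection with $\dot{X}$: given any $q \le p$ and any $n$, pick $m \in Y \cap A_q^n$ and extend $q$ to force $m \in \dot{X}$. So $\mathcal{I}$ stays tall, i.e. $\mathcal{I} \nleq_{\textsf{K}} \textsf{nwd}$.

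For part (2): by Table 2, $\textsf{fin} \times \textsf{fin} \nleq_{\textsf{K}} \mathcal{I}$ is equivalent to: every countable subfamily of $\mathcal{I}^*$ has a pseudointersection in $\mathcal{I}^+$. So let $\{F_n : n \in \omega\} \subseteq \mathcal{I}^*$; then $\{\omega \setminus F_n : n \in \omega\} \subseteq \mathcal{I}$, and since $\mathcal{I}$ is a P-ideal, this countable subfamily of $\mathcal{I}$ has a pseudounion $Z \in \mathcal{I}$, i.e. $(\omega \setminus F_n) \subseteq^* Z$ for all $n$. Then $\omega \setminus Z \subseteq^* F_n$ for every $n$, so $\omega \setminus Z$ is a pseudointersection of $\{F_n : n \in \omega\}$, and $\omega \setminus Z \in \mathcal{I}^* \subseteq \mathcal{I}^+$ since $Z \in \mathcal{I}$. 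This gives the required pseudointersection in $\mathcal{I}^+$ (in fact in $\mathcal{I}^*$), so $\textsf{fin} \times \textsf{fin} \nleq_{\textsf{K}} \mathcal{I}$.

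The second part is essentially a one-line duality once the right entry of Table 2 is invoked, so no real obstacle there. The main (and only mildly technical) point is part (1): one must be careful to phrase the Cohen-forcing genericity argument correctly — in particular that the sets "$A_p^n$ of values forceable into $\dot{X}$ above $p$ beyond $n$" are genuinely infinite (this uses that $\dot{X}$ is forced infinite) and that tightness is applied to the countable family of all of them simultaneously, after which a density argument finishes it. I would make sure to cite observation 7 in the list after the first definition, which lets us use the "nonempty intersection" form of tightness rather than the "infinite intersection" form, matching what the $A_p^n$ naturally give.
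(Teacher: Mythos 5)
Your part (2) is fine: routed through the Table 2 characterization it carries the same content as the paper's argument (the paper instead fixes a putative Kat\v{e}tov function $f:\omega\longrightarrow\omega^{2}$ and takes a pseudounion of the preimages of the columns), so there is no issue there.

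Part (1), however, has a genuine gap exactly where tightness is applied. You assert that each set $A_{q}^{n}$ (the values forceable into $\dot{X}$ below $q$ beyond $n$) ``is infinite, hence in $\mathcal{I}^{+}$''. Infinite does not imply $\mathcal{I}$-positive: a tight ideal is tall, so it has many infinite members, and nothing prevents $\dot{X}$ from being a name for an infinite subset of a fixed ground-model set $Z\in\mathcal{I}$ (for instance, the elements of $Z$ selected by the Cohen real). In that case every $A_{q}^{n}\subseteq Z$ lies in $\mathcal{I}$, and tightness --- in either formulation, both of which require the family to consist of positive sets --- cannot be invoked at all. The argument is repairable by a case split: if some $A_{q}^{n}\in\mathcal{I}$ with $q\leq p$, then since $q\Vdash\dot{X}\setminus(n+1)\subseteq A_{q}^{n}$ and $\dot{X}$ is forced to be infinite, the ground-model set $A_{q}^{n}\in\mathcal{I}$ itself witnesses tallness below $q$; otherwise all the $A_{q}^{n}$ are positive and your density argument goes through. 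As written, though, that step fails. Note also that the paper avoids forcing entirely: it shows directly that no $f:\mathbb{Q}\longrightarrow\omega$ is a Kat\v{e}tov function from $(\mathbb{Q},\textsf{nwd})$ to $(\omega,\mathcal{I})$ --- either some nonempty open set of $\mathbb{Q}$ has image in $\mathcal{I}$, or the images of a countable base are all positive and tightness gives $A\in\mathcal{I}$ whose preimage is dense. Your Cohen-indestructibility route via Table 2 is legitimate in principle, but the paper's direct argument builds in from the start precisely the case distinction (image in $\mathcal{I}$ versus positive) that your write-up glossed over.
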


\begin{proof}
First assume $\mathcal{I}$ is tight and $f:\mathbb{Q}\longrightarrow\omega.$
We will prove that $f$ is not a Kat\v{e}tov from ($\mathbb{Q},$\textsf{nwd)
}to\textsf{ }$\left(  \omega,\mathcal{I}\right)  .$ If there is an open non
empty set of $\mathbb{Q}$ whose image is in $\mathcal{I},$ there is nothing to
do (see Lemma \ref{EquivalenciaKatetov}), so assume this is not the case. Let
$\left\{  U_{n}\mid n\in\omega\right\}  $ be a base of open sets of
$\mathbb{Q}.$ Since each $f\left[  U_{n}\right]  \in\mathcal{I}^{+},$ we can
find $A\in\mathcal{I}$ having infinite intersection with all of them. In this
way $f^{-1}\left(  A\right)  $ is dense, so it is not nowhere dense.

\qquad\qquad\qquad

Now assume $\mathcal{I}$ is a \textsf{P}-ideal and $f:\omega\longrightarrow
\omega^{2},$ We will prove $f$ is not a Kat\v{e}tov function from $\left(
\omega,\mathcal{I}\right)  $ to ($\omega^{2},$\textsf{fin}$\times
$\textsf{fin). }For $n\in\omega,$ denote the column $C_{n}=\left\{  n\right\}
\times\omega.$ If there is $n\in\omega$ such that $f^{-1}\left(  C_{n}\right)
\notin\mathcal{I},$ there is nothing to do. In the other case, since
$\mathcal{I}$ is a \textsf{P}-ideal, we can find $A\in\mathcal{I}$ such that
$f^{-1}\left(  C_{n}\right)  \subseteq^{\ast}A$ for every $n\in\omega.$ In
this way, the image of the complement of $A$ is in \textsf{fin}$\times
$\textsf{fin, }hence $f$ is not Kat\v{e}tov.
\end{proof}

\qquad\qquad\qquad\ \ \ \ 

In this way, the Category Dichotomy for the class of \textsf{P}-ideals is
equivalent to the statement that every \textsf{P}-ideal has a restriction
above $\mathcal{ED}_{\text{\textsf{fin}}}$ \ (equivalently, they are not
\textsf{Q}$^{+}$). Note that item 2 in the lemma above is false for $\omega
$-hitting ideals. For example, take a maximal ideal extending \textsf{fin}%
$\times$\textsf{fin.}

\begin{definition}
Let $\mathcal{T}$ be an increasing tower. Define $\mathcal{I}\left(
\mathcal{T}\right)  $ as the ideal generated by $\mathcal{T}.$ For
$\mathcal{T}$ a tower, denote $\mathcal{F}\left(  \mathcal{T}\right)  $ the
filter it generates.
\end{definition}

\qquad\qquad\ \ 

Since the length of a (increasing) tower can not have countable cofinality, it
follows that its ideal (filter) is a \textsf{P}-ideal (\textsf{P}-filter). We
can now prove the following:

\begin{theorem}
Both $\mathfrak{t<}$ \textsf{cov}$\left(  \mathcal{M}\right)  $ and
$\mathfrak{t}=\mathfrak{d}$ imply that there is a \textsf{P}-ideal that is
\textsf{Q}$^{+}$. In this way, the Category Dichotomy for the class of
\textsf{P}-ideals fails.
\end{theorem}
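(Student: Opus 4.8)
The plan is to construct, in each of the two cases, an increasing tower $\mathcal{T}$ whose ideal $\mathcal{I}(\mathcal{T})$ is $\mathsf{Q}^+$; since $\mathcal{I}(\mathcal{T})$ is automatically a $\mathsf{P}$-ideal (its length has uncountable cofinality), and since by Lemma~\ref{Lema tight no abajo} no $\mathsf{P}$-ideal can have \textsf{fin}$\times$\textsf{fin} below it, the Proposition before Definition of $\mathcal{ED}_P$ shows that a $\mathsf{Q}^+$ $\mathsf{P}$-ideal is a counterexample to the Category Dichotomy for $\mathsf{P}$-ideals (it is not below \textsf{nwd} because tall $\mathsf{P}$-ideals are $\omega$-hitting hence tight, so Lemma~\ref{Lema tight no abajo}(1) applies; and no restriction is above $\mathcal{ED}_{\text{\textsf{fin}}}$ since being $\mathsf{Q}^+$ is inherited by restrictions). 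So everything reduces to: in each case, build an increasing tower generating a $\mathsf{Q}^+$ ideal. Using the characterization from Table~2, $\mathcal{I}(\mathcal{T})$ is $\mathsf{Q}^+$ exactly when for every $X\in\mathcal{I}(\mathcal{T})^+$ and every partition $\mathcal{P}$ of $X$ into finite pieces there is a partial selector in $\mathcal{I}(\mathcal{T})^+$; equivalently $\mathcal{ED}_{\text{\textsf{fin}}}\nleq_{\text{\textsf{KB}}}\mathcal{I}(\mathcal{T})\upharpoonright X$ for all $X$.

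For the case $\mathfrak{t}=\mathfrak{d}$, I would build the increasing tower $\mathcal{T}=\{A_\alpha\mid\alpha<\mathfrak{t}\}$ by transfinite recursion of length $\mathfrak{t}$, simultaneously handling a bookkeeping list of the $\mathfrak{d}=\mathfrak{t}$ many pairs (positive set $X$, finite-to-one partition of $X$) that could arise — more carefully, one enumerates pairs $(Y_\alpha,\mathcal{P}_\alpha)$ where $Y_\alpha\in[\omega]^\omega$ and $\mathcal{P}_\alpha$ is a partition of $Y_\alpha$ into finite sets, and at stage $\alpha$ one either diagonalizes against the tower (so that the complements $\omega\setminus A_\alpha$ form a genuine tower — this is where the $\subseteq^*$-decreasing complements with no pseudointersection must be maintained, and the length being $\mathfrak{t}$ lets this go through) or, if $Y_\alpha$ is still positive against $\{\omega\setminus A_\beta\mid\beta<\alpha\}$, one picks inside $Y_\alpha$ a partial selector $S_\alpha$ of $\mathcal{P}_\alpha$ that remains positive, and threads $S_\alpha$ into the construction. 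The role of $\mathfrak{t}=\mathfrak{d}$ is twofold: $\mathfrak{t}$ many steps suffice to kill all pseudointersections (definition of $\mathfrak{t}$), while $\mathfrak{d}$ many steps let the bookkeeping see every relevant partition; the key combinatorial point is that from any partition of a set into finite pieces one can extract an infinite partial selector, and one can do this while staying almost-inside the finitely (or $<\mathfrak{t}$) many constraints already imposed, using a dominating function / interval-partition argument that $\mathfrak{d}=\mathfrak{t}$ makes available at each step.

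For the case $\mathfrak{t}<\textsf{cov}(\mathcal{M})$, the cleaner route is to use Theorem~\ref{cosas nonED}(2) and Proposition~\ref{non weakly selective}: since $\textsf{cov}(\mathcal{M})\le\textsf{non}^*(\mathcal{ED}_{\text{\textsf{fin}}})$ is not quite what is stated, I would instead use that $\textsf{cov}(\mathcal{M})=\textsf{min}\{\mathfrak{d},\textsf{non}^*(\mathcal{ED}_{\text{\textsf{fin}}})\}$, so $\mathfrak{t}<\textsf{cov}(\mathcal{M})$ gives $\mathfrak{t}<\textsf{non}^*(\mathcal{ED}_{\text{\textsf{fin}}})$, hence by Theorem~\ref{cosas nonED}(2b) for every increasing interval partition $P$ and every family of $\le\mathfrak{t}$ partial infinite selectors of $P$ there is a selector of $P$ meeting each of them infinitely. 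Now build the increasing tower $\{A_\alpha\mid\alpha<\mathfrak{t}\}$ so that each $\omega\setminus A_\alpha$ is (the support of) such a common selector: given a positive set $X$ and a finite-to-one partition of it, one first reduces to an increasing interval partition of $\omega$ refining the given data, then Theorem~\ref{cosas nonED}(2b) produces a selector $S$ simultaneously $\mathcal{I}(\mathcal{T}_{<\alpha})$-positive (i.e. infinitely meeting all the $<\mathfrak{t}$ previously chosen selectors $\omega\setminus A_\beta$); take $A_\alpha$ a $\subseteq^*$-lower bound of the $A_\beta$ avoiding $S$ (possible since $\alpha<\mathfrak{t}\le\mathfrak{b}$). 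The tower property (no pseudointersection of the $A_\alpha$) is guaranteed because the length is $\mathfrak{t}$ and we can always additionally diagonalize, and the $\mathsf{Q}^+$ witnesses $S$ sit positively in $\mathcal{I}(\mathcal{T})^+$.

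The main obstacle is the simultaneous bookkeeping in the recursion: at stage $\alpha$ one must choose $A_\alpha$ (or the selector $\omega\setminus A_\alpha$) so that it is a $\subseteq^*$-lower bound of all previous $A_\beta$ \emph{and} either kills a pseudointersection or certifies a $\mathsf{Q}^+$-selector for the $\alpha$-th pair — and crucially one must ensure that every pair $(X,\mathcal{P})$ with $X$ eventually ending up positive actually gets addressed. Since $\mathcal{I}(\mathcal{T})^+$ is only known at the end, the standard fix is to pre-enumerate all pairs $(Y,\mathcal{P})$ with $Y\in[\omega]^\omega$ (there are $\mathfrak{c}$ of them, but by a reflection/genericity argument it suffices to meet a club-many, or to argue that if $X$ is positive at the end then it was positive cofinally often and some pair-copy with $Y=X$ was caught), and to choose the selector at that stage to lie inside the already-constructed part. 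Getting this reflection argument right — i.e. proving that the resulting ideal really is $\mathsf{Q}^+$ and not merely "$\mathsf{Q}^+$ against the sets we happened to enumerate" — is the delicate part; the rest is a routine fusion/tower construction using the cardinal hypotheses exactly as indicated.
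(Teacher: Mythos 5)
Your opening reduction (a tall \textsf{Q}$^{+}$ \textsf{P}-ideal generated by an increasing tower violates the dichotomy, via Lemma \ref{Lema tight no abajo} and the trichotomy) is exactly the paper's. The gap is in the constructions, and it is precisely the point you flag yourself and leave unresolved: certifying \textsf{Q}$^{+}$ against \emph{all} positive sets. A recursion of length $\mathfrak{t}$ (or $\mathfrak{d}$) cannot enumerate the $\mathfrak{c}$ many pairs $(Y,\mathcal{Q})$, and no ``reflection/club'' argument of the kind you invoke is available or supplied; being positive for the final ideal is not a property that can be anticipated stage by stage in the way your bookkeeping needs. There are further problems: in the case $\mathfrak{t}<$ \textsf{cov}$\left(\mathcal{M}\right)$ one has $\mathfrak{t}<\mathfrak{d}$, so your scheme cannot even schedule a dominating family of interval partitions; your appeal to Theorem \ref{cosas nonED}(2b) at stage $\alpha$ is misapplied, since the previously chosen sets $\omega\setminus A_{\beta}$ are partial selectors of \emph{their own} partitions rather than of the current one, so the hypothesis of (2b) is not met; and ``$\mathfrak{t}$ many steps suffice to kill all pseudointersections'' is not what the definition of $\mathfrak{t}$ gives you --- towerness must come from a structural feature of the construction, not from diagonalization against $\mathfrak{c}$ many candidates.

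The paper's proof avoids your obstacle entirely. For $\mathfrak{t}<$ \textsf{cov}$\left(\mathcal{M}\right)$ there is no construction at all: by Proposition \ref{non weakly selective}, \textsf{non}$($\textsf{WS}$)=$ \textsf{cov}$\left(\mathcal{M}\right)$, so the ideal generated by \emph{any} increasing tower, having cofinality at most $\mathfrak{t}<$ \textsf{cov}$\left(\mathcal{M}\right)$, is weakly selective and in particular \textsf{Q}$^{+}$ (you cite this proposition but do not use it this way). For $\mathfrak{t}=\mathfrak{d}$ one fixes a dominating family $\left\{P_{\alpha}\mid\alpha<\mathfrak{d}\right\}$ of interval partitions and builds a $\subseteq^{\ast}$-decreasing sequence $\left\{B_{\alpha}\mid\alpha<\mathfrak{d}\right\}$ with $B_{\alpha}$ a partial selector of $P_{\alpha}$ lying entirely in the even or entirely in the odd intervals of $P_{\alpha}$; the selector condition against a dominating family already rules out pseudointersections (towerness is free), and the parity condition makes $B_{\alpha}$ a partial selector of every partition each of whose pieces meets at most two consecutive $P_{\alpha}$-intervals. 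The idea you are missing is that \textsf{Q}$^{+}$ is then verified with no bookkeeping over positive sets: given $X\in\mathcal{F}\left(\mathcal{B}\right)^{+}$ and a partition $Q$ of $X$ into finite pieces, extend $Q$ by singletons to a partition of $\omega$, take a member $B$ of the \emph{filter} that is a partial selector of it, and observe that $B\cap X$ is a positive partial selector of $Q$. Placing the selectors inside the filter, indexed only by the $\mathfrak{d}=\mathfrak{t}$ many dominating interval partitions, is what makes the quantifier over all positive sets come for free; without that device (or the cofinality argument in the first case) your plan does not close.
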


\begin{proof}
If $\mathfrak{t<}$ \textsf{cov}$\left(  \mathcal{M}\right)  ,$ then by Theorem
\ref{non weakly selective}, the ideal generated by an increasing tower is
weakly selective and we are done. Now we assume $\mathfrak{t}=\mathfrak{d.}$
Let $\mathcal{P}$ $=\left\{  P_{\alpha}\mid\alpha<\mathfrak{d}\right\}  $ be a
dominating family of interval partitions. We recursively define $\mathcal{B}%
=\left\{  B_{\alpha}\mid\alpha<\mathfrak{d}\right\}  $ such that for every
$\alpha<\mathfrak{d}$:

\begin{enumerate}
\item $\mathcal{B}$ is a tower.

\item $B_{\alpha}$ is a partial selector of $P_{\alpha}.$ Moreover, either
$B_{\alpha}$ is contained in the union the even intervals or $P_{\alpha}$ or
in the union of the odd intervals.\qquad
\end{enumerate}

\qquad\ \ \ \qquad
\ \ \ \ \ \ \ \ \ \ \ \ \ \ \ \ \ \ \ \ \ \ \ \ \ \ \ \ \ \ \ \ \ \ \ \ \ \ \ \qquad
\qquad\ \ 

This is easy to do (using that $\mathfrak{t}=\mathfrak{d}$). Note that item 2
above implies that $\mathcal{B}$ is actually a tower. We claim that
$\mathcal{F}\left(  \mathcal{B}\right)  $ is as desired. First, we claim that
for every interval partition $R,$ there is $B\in\mathcal{F}\left(
\mathcal{B}\right)  $ that is a partial selector of $R.$ Since $\mathcal{P}$
is a dominating family, we can find $\alpha<\mathfrak{d}$ such that every
interval of $P_{\alpha}$ contains one of $R.$ This implies that every interval
of $R$ can intersect at most two intervals of $P_{\alpha}.$ Since $B_{\alpha}$
is a partial selector of $P_{\alpha}$ and it never intersects two consecutive
elements of $P_{\alpha},$ it follows that $B_{\alpha}$ is a partial selector
of $R.$ Finally, we argue that $\mathcal{F}\left(  \mathcal{B}\right)  $ is a
\textsf{Q}$^{+}$-filter. Let $X\in\mathcal{F}\left(  \mathcal{B}\right)  ^{+}$
and $Q$ a partition of $X$ into finite pieces. Define $R=Q\cup\left\{
\left\{  n\right\}  \mid n\in\omega\setminus X\right\}  ,$ which is a
partition of $\omega$ into finite pieces. In this way, there is $B\in
\mathcal{F}\left(  \mathcal{B}\right)  $ a partial selector of $R.$ It is
clear that $B\cap X\in\mathcal{F}\left(  \mathcal{B}\right)  ^{+}$ and is a
partial selector of $Q.$
\end{proof}

\qquad\qquad

So the Category Dichotomy of \textsf{P}-ideals is consistently false,
surprisingly, it is also consistently true, in fact, we will now prove that it
holds in the Laver model. We first review some notions regarding Laver
forcing. Let $T\subseteq\omega^{<\omega}$ be a tree and $s\in T.$ We say $s$
\emph{is the stem of }$T$ (denoted as $s=$ \textsf{st}$\left(  T\right)  $) if
every node of $T$ is comparable with $s$ and it is maximal with this property.
A tree $p\subseteq\omega^{<\omega}$ is a \emph{Laver tree }if it consists of
increasing sequences, it has a stem and if $t\in T$ extends the stem of $T,$
then \textsf{suc}$_{T}\left(  s\right)  $ is infinite. The set of Laver trees
is denoted by $\mathbb{L}$ and given $p,q\in\mathbb{L},$ denote $p\leq q$ if
$p\subseteq q.$ Moreover, define $p\leq_{0}q$ if $p\leq q$ and \textsf{st}%
$\left(  p\right)  =$ \textsf{st}$\left(  q\right)  .$ If $p\in\mathbb{L}$ and
$s\in p,$ define $p_{s}=\left\{  t\in p\mid t\subseteq s\vee s\subseteq
t\right\}  .$ It is clear that $p_{s}\in\mathbb{L},$ it extends $p$ and in
case \textsf{st}$\left(  p\right)  \subseteq s,$ we have that \textsf{st}%
$\left(  p_{s}\right)  =s.$

\qquad\qquad\ \ \ \ 

The \emph{Laver generic real }will be denoted by $l_{gen}\in\omega^{\omega}$
as is the only element that is a branch of every tree in the generic filter.
The \emph{generic Laver interval partition}\ is defined as $P_{gen}=\left\{
P_{gen}\left(  n\right)  \mid n\in\omega\right\}  $ where $P_{gen}\left(
n\right)  =$ $[l_{gen}\left(  n\right)  ,l_{gen}\left(  n+1\right)  ).$ This
is an interval partition of almost all $\omega$ (we reiterate that for us,
Laver trees consist of increasing functions).

\begin{theorem}
[See \cite{Barty}]\emph{(Pure decision property) }Let $p\in\mathbb{L},$ $X$ a
finite set and $\dot{a}$ a $\mathbb{L}$-name such that $p\Vdash$%
\textquotedblleft$\dot{a}\in X$\textquotedblright. There is $q\leq_{0}p$ and
$b\in X$ such that $q\Vdash$\textquotedblleft$\dot{a}=b$\textquotedblright. \ 
\end{theorem}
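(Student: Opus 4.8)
The plan is to run the standard fusion/pruning argument. Call a node $t\in p$ that extends $\mathsf{st}(p)$ \emph{decisive} if there are $r\leq_0 p_t$ and $c\in X$ with $r\Vdash\dot a=c$. Since $p_{\mathsf{st}(p)}=p$, it suffices to show that $\mathsf{st}(p)$ is decisive; this immediately yields the desired $q\leq_0 p$ and $b\in X$.

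The key lemma is: if $t$ is a node of $p$ extending $\mathsf{st}(p)$ for which infinitely many immediate successors $t^\frown n$ (with $n\in\mathsf{suc}_p(t)$) are decisive, then $t$ is decisive. To prove it, for each such $n$ fix $r_n\leq_0 p_{t^\frown n}$ and $c_n\in X$ with $r_n\Vdash\dot a=c_n$; because $X$ is finite, by the pigeonhole principle there are a single $c\in X$ and an infinite set $A$ of such $n$ with $c_n=c$ for all $n\in A$. Put $q=\{u\in p:u\subseteq t\}\cup\bigcup_{n\in A}r_n$. Then $q$ is a Laver tree with $\mathsf{st}(q)=t$: its successor set at $t$ is the infinite set $A$, and below each $t^\frown n$ with $n\in A$ it agrees with the Laver tree $r_n$. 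Hence $q\leq_0 p_t$, and since every condition extending $q$ extends some $r_n$, we get $q\Vdash\dot a=c$, so $t$ is decisive.

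With the lemma in hand, assume toward a contradiction that $\mathsf{st}(p)$ is not decisive. By the contrapositive, for every non-decisive node $t$ only finitely many $t^\frown n$ are decisive, so cofinitely many $n\in\mathsf{suc}_p(t)$ yield a non-decisive $t^\frown n$. Build $q\leq_0 p$ recursively with $\mathsf{st}(q)=\mathsf{st}(p)$ by decreeing, for each $t\in q$ extending $\mathsf{st}(p)$, that $\mathsf{suc}_q(t)=\{n\in\mathsf{suc}_p(t):t^\frown n\text{ is not decisive}\}$. An easy induction shows that every node of $q$ extending its stem is non-decisive (the base case is the hypothesis on $\mathsf{st}(p)$), whence each such successor set is infinite and $q$ is a genuine Laver tree. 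Now $q\Vdash\dot a\in X$ with $X$ finite, so there are $q'\leq q$ and $b\in X$ with $q'\Vdash\dot a=b$; set $t=\mathsf{st}(q')$. Then $t\in q$, so $t$ is non-decisive; but $q'\subseteq q\subseteq p$ has stem $t$, hence $q'\leq_0 p_t$ and witnesses that $t$ is decisive — a contradiction. Therefore $\mathsf{st}(p)$ is decisive, which is exactly the statement.

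The main obstacle is the verification inside the key lemma that the amalgamated tree $q=\{u\in p:u\subseteq t\}\cup\bigcup_{n\in A}r_n$ is legitimately a Laver tree with stem $t$ and that it forces the common value $c$ — i.e., the bookkeeping that the infinitely many pure extensions $r_n$ glue together correctly below $t$. Once that, together with the finiteness of $X$ (the pigeonhole step), is in place, the pruning argument producing $q$ is entirely routine.
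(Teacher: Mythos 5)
The paper does not actually prove this theorem --- it is quoted from \cite{Barty} --- so there is no in-text argument to compare against. Your proof is the standard pure-decision argument for Laver forcing (decisive nodes, pigeonhole on the finite set $X$, amalgamation of pure extensions above an infinite set of successors, and pruning to a condition all of whose nodes above the stem are non-decisive), and it is correct. The only inaccuracy is the phrase ``every condition extending $q$ extends some $r_n$'': an extension of the amalgamated $q$ may keep stem $t$ and merely thin the successor sets, in which case it extends no single $r_n$. What is true, and is all that the conclusion $q\Vdash\dot a=c$ requires, is that the conditions whose stem passes through some $t^\frown n$ with $n\in A$ are dense below $q$, and each of these does extend the corresponding $r_n$ and hence forces $\dot a=c$; so no extension of $q$ can force $\dot a\neq c$. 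With that reading the argument is complete.
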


\qquad\qquad\qquad\ \ \ \qquad\qquad

We will need the following lemma:

\begin{lemma}
Let $p\in\mathbb{L}$ with $s=$ \textsf{st}$\left(  p\right)  \in\omega^{n}$
and $\dot{a}$ such that $p\Vdash$\textquotedblleft$\dot{a}\in\dot{P}%
_{gen}\left(  n\right)  $\textquotedblright. There is $q\leq_{0}p$ such that
one of the following holds:

\begin{enumerate}
\item There is a function $g:$ \textsf{suc}$_{q}\left(  s\right)
\longrightarrow\omega$ injective such that $q_{s^{\frown}i}\Vdash
$\textquotedblleft$\dot{a}=g\left(  i\right)  $\textquotedblright\ for every
$i\in$ \textsf{suc}$_{q}\left(  s\right)  .$

\item There is a sequence $\langle h_{i}\mid i\in$ \textsf{suc}$_{q}\left(
s\right)  \rangle$ such that for every $i\in$ \textsf{suc}$_{q}\left(
s\right)  ,$ we have that $h_{i}$ :\textsf{suc}$_{q}\left(  s\right)
\longrightarrow\omega$ is injective and $q_{s^{\frown}i^{\frown}j}\Vdash
$\textquotedblleft$\dot{a}=h_{i}\left(  j\right)  $\textquotedblright\ for
every $j\in$ \textsf{suc}$_{q}\left(  s^{\frown}i\right)  .$
\end{enumerate}
\end{lemma}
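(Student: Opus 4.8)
The plan is to apply the pure decision property to the name $\dot a$ repeatedly, moving down the tree $p$ two levels at a time. Fix $s=\textsf{st}(p)\in\omega^n$, so that $\dot P_{gen}(n)=[l_{gen}(n),l_{gen}(n+1))$, and note that $l_{gen}(n)=s(n-1)$ is already decided by $p$ (it is read off the stem), while $l_{gen}(n+1)$ is the next value of the generic, i.e.\ determined by the immediate successors of the stem. So for each $i\in\textsf{suc}_p(s)$, the condition $p_{s^\frown i}$ forces $\dot P_{gen}(n)=[s(n-1),i)$, a \emph{finite} set. Hence $p_{s^\frown i}\Vdash\text{``}\dot a\in[s(n-1),i)\text{''}$ with the right-hand side a concrete finite set, and the pure decision property (Theorem, ``Pure decision property'') applies: there is $q^i\leq_0 p_{s^\frown i}$ and a value $a_i\in[s(n-1),i)$ with $q^i\Vdash\text{``}\dot a=a_i\text{''}$. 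Amalgamating the $q^i$ over all $i\in\textsf{suc}_p(s)$ (keeping the stem $s$ fixed and thinning $\textsf{suc}_p(s)$ only if necessary) gives $q'\leq_0 p$ and a function $i\mapsto a_i$ defined on $\textsf{suc}_{q'}(s)$.

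**The dichotomy.** Now I distinguish two cases according to whether this function can be made injective. If there is an infinite $A\subseteq\textsf{suc}_{q'}(s)$ on which $i\mapsto a_i$ is one-to-one, then restrict $q'$ to the successors in $A$: the resulting $q\leq_0 p$ together with $g=(i\mapsto a_i)\!\restriction\! A$ witnesses alternative (1), since $q_{s^\frown i}\leq q^i$ forces $\dot a=a_i=g(i)$. Otherwise, the function $i\mapsto a_i$ is finite-to-one-free, i.e.\ some value $v$ is attained on an infinite set; but more usefully, since each $a_i<i$ and $\textsf{suc}_{q'}(s)$ is infinite, if no infinite injective subfamily exists then (by an easy pigeonhole / $\Delta$-system style argument on $\omega^{<\omega}$) $i\mapsto a_i$ is bounded on a cofinite piece of $\textsf{suc}_{q'}(s)$, hence constant on an infinite set — in any case there is an infinite $B\subseteq\textsf{suc}_{q'}(s)$ on which $a_i$ takes a single value $a_\ast$. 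This will turn out to be the degenerate sub-case feeding into alternative (2): replace $q'$ by its restriction to successors in $B$ and pass one level deeper.

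**Going down a second level.** Having arranged (in the non-injective case) that $q_{s^\frown i}$ forces $\dot a$ into the \emph{same} finite set $[s(n-1),a_\ast]$ for every $i\in\textsf{suc}_{q'}(s)$ — or more simply, that for each fixed $i$ the value of $\dot P_{gen}(n)$ below $q'_{s^\frown i}$ is $[s(n-1),i)$, a finite set determined by $i$ — I repeat the pure-decision argument one level lower. For each $i\in\textsf{suc}_{q'}(s)$ and each $j\in\textsf{suc}_{q'}(s^\frown i)$, the condition $q'_{s^\frown i^\frown j}$ forces $\dot a\in[s(n-1),i)$, a finite set; by pure decision there is $r^{i,j}\leq_0 q'_{s^\frown i^\frown j}$ and a value $h_i(j)\in[s(n-1),i)$ with $r^{i,j}\Vdash\text{``}\dot a=h_i(j)\text{''}$. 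For fixed $i$, the map $j\mapsto h_i(j)$ is defined on the infinite set $\textsf{suc}_{q'}(s^\frown i)$ with values in a fixed finite set $[s(n-1),i)$ — wait, that forces it to be constant, not injective, which is the wrong conclusion. The resolution is that one must \emph{not} thin to a single value $a_\ast$ at the first level; instead, in the non-injective case one keeps the full infinite $\textsf{suc}_{q'}(s)$ and observes that the $\dot P_{gen}(n)$-interval is $[s(n-1),i)$, which \emph{grows with $i$}. So fix an increasing enumeration $\textsf{suc}_{q'}(s)=\{i_0<i_1<\cdots\}$; for the $k$-th successor $i_k$ we then choose, among the infinitely many $j\in\textsf{suc}_{q'}(s^\frown i_k)$, values $h_{i_k}(i_\ell)$ for $\ell<(\text{number of successors we retain})$ — using that $[s(n-1),i_k)$ is large once $k$ is large — so that $\ell\mapsto h_{i_k}(i_\ell)$ is injective; a standard diagonal thinning of all the $\textsf{suc}_{q'}(s^\frown i_k)$ simultaneously yields $q\leq_0 q'$ with $\textsf{suc}_q(s)=\textsf{suc}_q(s^\frown i)$ for all relevant $i$, witnessing alternative (2).

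**The main obstacle.** The delicate point — and where care is required — is exactly the bookkeeping just flagged: one cannot simplify the first level prematurely, because the size of the target interval $[s(n-1),i)$ is what allows the second-level functions $h_i$ to be injective, and this is only available for cofinitely many $i$ (when $i$ is large the interval is large; for the finitely many small $i$ one either discards them or absorbs them). So the proof is: apply pure decision at the stem's successors to get $g\colon i\mapsto a_i$; if $g$ has an infinite injective restriction, thin to it and output (1); otherwise $g$ is bounded on a cofinite set, and then apply pure decision again at depth $n+2$, using the (cofinitely many, unboundedly large) interval sizes together with a simultaneous diagonalization over all branches $s^\frown i$ to build the injective family $\langle h_i\rangle$, and output (2). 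The only genuinely technical ingredients are (i) that amalgamation of $\leq_0$-extensions over the infinitely many immediate successors of a stem again lands in $\mathbb L$ (standard, since each piece keeps the stem and we may thin the successor set), and (ii) the diagonal choice at the second level, which is routine once one tracks that $|[s(n-1),i_k)|\to\infty$.
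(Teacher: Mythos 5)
Your argument is built on an off-by-one misreading of $\dot{P}_{gen}(n)$, and this is not cosmetic. Since $s=\textsf{st}(p)\in\omega^{n}$, the condition $p$ decides only $l_{gen}(0),\dots,l_{gen}(n-1)$; the value $l_{gen}(n)$ is the \emph{first} coordinate beyond the stem. So $p_{s^{\frown}i}$ forces $\dot{P}_{gen}(n)=[i,\dot{l}_{gen}(n+1))$, whose right endpoint is still undecided, and only $p_{s^{\frown}i^{\frown}j}$ forces $\dot{P}_{gen}(n)=[i,j)$. Two consequences undermine your first step: (i) you cannot apply pure decision below $p_{s^{\frown}i}$, because $\dot{a}$ is not forced into any finite set there (all that is known is $\dot{a}\geq i$); and (ii) your inequality $a_{i}<i$ is exactly backwards --- in fact $p_{s^{\frown}i}\Vdash i=l_{gen}(n)\leq\dot{a}$, and this reversed inequality is the engine of the correct proof: one purely decides $\dot{a}$ below every node of length $n+2$, homogenizes each $\textsf{suc}(s^{\frown}i)$ so that the decided values are either all equal or pairwise distinct, and then, in the case where almost every $i$ yields a constant value $a_{i}$, the bound $a_{i}\geq i$ makes $i\mapsto a_{i}$ finite-to-one, hence injective after thinning $\textsf{suc}(s)$, which is alternative (1); if instead infinitely many $i$ yield pairwise distinct values, that is alternative (2). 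Under your indexing the lemma would in fact be false: $p$ could outright decide $\dot{a}=s(n-1)$ while still forcing $\dot{a}\in[s(n-1),\dot{l}_{gen}(n+1))$ in your sense, and then no $q\leq_{0}p$ could witness either alternative.

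The second defect is the one you flagged yourself (``that forces it to be constant'') and your patch does not repair it. Once $\dot{a}$ has been decided below $q'_{s^{\frown}i}$, every further extension $q'_{s^{\frown}i^{\frown}j}$ forces the same value, so each $h_{i}$ is constant no matter how you thin; and you are not free to ``choose values $h_{i_{k}}(i_{\ell})$'' to arrange injectivity --- the values are dictated by the name via pure decision, not selected by you, so the observation that $[s(n-1),i_{k})$ grows with $k$ has no bearing. Injectivity of the $h_{i}$ in alternative (2) must come from a case distinction on the decided values at level $n+2$ (pairwise distinct versus constant over $j\in\textsf{suc}(s^{\frown}i)$), which is only available because the interval $\dot{P}_{gen}(n)$ is decided there and not one level earlier. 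So the proposal has a genuine gap: the first-level pure decision is illegitimate, and the non-injective branch of your dichotomy never produces the required injective family.
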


\begin{proof}
Note that since $s\in\omega^{n},$ then $p$ has not decided $l_{gen}\left(
n\right)  $ or $l_{gen}\left(  n+1\right)  .$ Moreover, if $s^{\frown
}i^{\frown}j\in p,$ then $p_{s^{\frown}i^{\frown}j}\Vdash$\textquotedblleft%
$\dot{P}_{gen}\left(  n\right)  =[i,j)$\textquotedblright. Now, for every
$t\in p$ with $\left\vert t\right\vert =n+2,$ we can apply the pure decision
property to $p_{t}$ and $\dot{a}.$ In this way, we find $r\leq_{0}p$ such that
for every $t\in r$ with $\left\vert t\right\vert =n+2,$ there is $a_{t}$ such
that $r_{t}\Vdash$\textquotedblleft$\dot{a}=a_{t}$\textquotedblright. We can
now find $\overline{r}\leq_{0}r$ such that for every $i\in$ \textsf{suc}%
$_{\overline{r}}\left(  s\right)  ,$ one of the following holds:

\begin{enumerate}
\item $a_{t}=a_{z}$ for every $t,z\in\overline{r}$ such that $\left\vert
t\right\vert =\left\vert z\right\vert =n+2$ and $t\left(  n\right)  =z\left(
n\right)  =i.$

\item $a_{t}\neq a_{z}$ for every $t,z\in\overline{r}$ distinct such that
$\left\vert t\right\vert =\left\vert z\right\vert =n+2$ and $t\left(
n\right)  =z\left(  n\right)  =i.$
\end{enumerate}

\qquad\ \ 

If there are infinitely many $i\in$ \textsf{suc}$_{\overline{r}}\left(
s\right)  $ for which condition 2 holds, we can easily find and extension of
$\overline{r}$ satisfying item 2 of the lemma. Assume that condition 1 holds
for almost all $i\in$ \textsf{suc}$_{\overline{r}}\left(  s\right)  .$ By
pruning $\overline{r},$ we can assume that in fact this holds for all. For
every $i\in$ \textsf{suc}$_{\overline{r}}\left(  s\right)  ,$ let $a_{i}$ be
the common value obtained in condition 1 above. We claim that for every
$k\in\omega,$ there are only finitely many $i\in$ \textsf{suc}$_{\overline{r}%
}\left(  s\right)  $ such that $a_{i}=k.$ This is because if $i>k,$ then
$p_{s^{\frown}i}\Vdash$\textquotedblleft$k<i=l_{gen}\left(  n\right)  \leq
\dot{a}$\textquotedblright. We can now easily obtain an extension of
$\overline{r}$ for which item 1 in the lemma holds.
\end{proof}

\qquad\qquad\qquad\ \ \ \ \ \ \qquad\qquad\qquad\ \ 

With the lemma at our disposal, we now prove the following:

\begin{proposition}
Let $\mathcal{I}$ be a tall \textsf{P}-ideal, $p\in\mathbb{L}$ and $\dot{X}$
be a $\mathbb{L}$-name for a selector of $P_{gen}.$ There is $q\leq_{0}p$ and
$B\in\mathcal{I}$ such that $q\Vdash$\textquotedblleft$\dot{X}\subseteq
B$\textquotedblright. \label{Laver 1 paso}
\end{proposition}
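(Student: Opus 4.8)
The goal is to find $q\le_0 p$ and $B\in\mathcal I$ which "captures" the generic selector $\dot X$ of $P_{gen}$, using that $\mathcal I$ is a tall P-ideal. The strategy is a fusion argument: build a decreasing sequence of Laver trees $p=p_0\ge_0 p_1\ge_0\cdots$, at stage $n$ handling every node $t\in p_n$ of length $n$ above the stem, and extracting for each such $t$ (after thinning its successor set) a countable amount of information about $\dot X\cap \dot P_{gen}(n)$ via the preceding Lemma applied to $p_n$ restricted below $t$. Since $\dot X$ is a selector, $p\Vdash$ "$\dot X\cap\dot P_{gen}(n)$ is a singleton" for each $n$, so at the node $t$ with $|t|=n$ the Lemma gives us (case 1) an injective $g_t:\mathrm{suc}(t)\to\omega$, or (case 2) a family of injections $h_{t,i}$; in either case the "candidate values" for $\dot X\cap\dot P_{gen}(n)$ below $t$ form a countable set $A_t\subseteq\omega$, and crucially each value is forced by a condition with a longer stem — so thinning is always possible and fusion goes through. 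Let $A=\bigcup\{A_t : t\in q,\ t\text{ above the stem}\}$, a countable union of countable sets, hence a countable subset of $\omega$.

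The point of tallness and the P-property is the last step: $A$ is a countable set, but we need $A$ (or enough of it) to lie in $\mathcal I$. Here is the subtlety — $A$ itself need not be in $\mathcal I$. Instead, we enumerate the nodes $t_0,t_1,\dots$ of $q$ above the stem and the corresponding countable candidate-sets $A_{t_k}$; we will further thin the tree so that, writing $\mathrm{lev}(t)=|t|$, for each $t_k$ the set $A_{t_k}$ is "small" relative to where it appears. Concretely: since $\mathcal I$ is tall and the candidate values forced below $s^\frown i$ for $\dot X\cap \dot P_{gen}(n)$ always satisfy (as in the Lemma's proof) $a \ge i = l_{gen}(n)\to\infty$ along $\mathrm{suc}(t)$, we may at each node thin $\mathrm{suc}_q(t)$ so that the set of candidate values $\{g_t(i): i\in\mathrm{suc}_q(t)\}$ (resp. the union of the $\mathrm{im}(h_{t,i})$) lies in $\mathcal I$: indeed, tallness lets us pick an infinite subset of $\mathrm{suc}_q(t)$ on which $g_t$ has infinite intersection with some member of $\mathcal I$, but since $g_t$ is injective we actually want its whole image to be in $\mathcal I$, which requires a sharper move — we enlarge $\mathcal I$-membership by using that $\mathcal I$ is tall to choose, for each $t$, an infinite $S_t\subseteq\mathrm{suc}_q(t)$ with $g_t[S_t]\in\mathcal I$ (possible because any infinite set meets some element of $\mathcal I$ in an infinite set, and then pass to a further infinite subset). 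Thus after this refinement each $A_t\in\mathcal I$, and there are countably many nodes $t$.

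Now apply the P-ideal property: $\{A_t : t\in q\text{ above stem}\}$ is a countable subfamily of $\mathcal I$, so it has a pseudounion $B\in\mathcal I$, i.e. $A_t\subseteq^* B$ for every $t$. It remains to absorb the finitely many exceptions: for each $t$ the finite set $A_t\setminus B$ can be killed by shrinking $\mathrm{suc}_q(t)$ to exclude the (finitely many) successors $i$ with $g_t(i)\in A_t\setminus B$ — this is a legitimate fusion move since it removes only finitely much. After this final pruning we obtain $q\le_0 p$ in which, along every branch, the generic selector's value in each block $P_{gen}(n)$ is one of the $g_t(i)\in B$; hence $q\Vdash$ "$\dot X\subseteq B$", with $B\in\mathcal I$, as required.

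**Main obstacle.** The delicate point is the second paragraph: making each candidate-set $A_t$ land in $\mathcal I$ by thinning successor sets, while keeping enough successors to have a Laver tree and while remaining compatible with the fusion bookkeeping. The injectivity of $g_t$ and the divergence $l_{gen}(n)\to\infty$ are exactly what make this possible, but one must be careful that tallness is applied to an \emph{infinite} selector-index set and that the resulting element of $\mathcal I$ is chosen uniformly enough that the later P-ideal step (a single pseudounion for countably many sets) suffices; case 2 of the Lemma requires handling a double array $\langle h_{t,i}\rangle$, whose combined image must also be driven into $\mathcal I$ — this is where most of the care (and a secondary fusion/diagonalization inside the node) is needed.
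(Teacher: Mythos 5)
Your proposal is correct and is essentially the paper's own argument: the paper applies the preceding lemma at every node above the stem (the same fusion), then uses that a tall \textsf{P}-ideal is $\omega$-hitting to get a single $B\in\mathcal{I}$ meeting every injective image $\mathrm{im}(g^{t})$, $\mathrm{im}(h^{t}_{i})$ infinitely and prunes successor sets into $B$ --- your ``tallness at each node, then a pseudounion'' step is just the standard proof of that $\omega$-hitting property unfolded. The only nit is the blocks at or below the stem level, which your final sentence silently skips: as in the paper, one absorbs the finitely many possible values of $\dot{X}$ there into $B$ (the paper replaces $B$ by $B\cup l_{gen}(m)$, which the stem decides, so $B$ stays in $\mathcal{I}$).
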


\begin{proof}
By applying the previous lemma infinitely many times, without lost of
generality, we may assume that for every $t\in p$ below the stem with
$\left\vert t\right\vert =n,$ one of the following holds:

\begin{enumerate}
\item There is a function $g^{t}:$ \textsf{suc}$_{p}\left(  t\right)
\longrightarrow\omega$ injective such that $p_{t^{\frown}i}\Vdash
$\textquotedblleft$\dot{a}=g^{t}\left(  i\right)  $\textquotedblright\ for
every $i\in$ \textsf{suc}$_{p}\left(  t\right)  .$

\item There is a sequence $\langle h_{i}^{t}\mid i\in$ \textsf{suc}%
$_{p}\left(  t\right)  \rangle$ such that for every $i\in$ \textsf{suc}%
$_{p}\left(  t\right)  ,$ we have that $h_{i}^{t}:$ \textsf{suc}$_{p}\left(
t^{\frown}i\right)  \longrightarrow\omega$ is injective and $p_{t^{\frown
}i^{\frown}j}\Vdash$\textquotedblleft$\dot{a}=h_{i}^{t}\left(  j\right)
$\textquotedblright\ for every $j\in$ \textsf{suc}$_{p}\left(  t^{\frown
}i\right)  .$
\end{enumerate}

\qquad\qquad\ \ \qquad\ \ 

We will say $t\in p$ (below the stem) is of \emph{Type 1} if condition 1 above
holds and is of \emph{Type 2} if condition 2 above is the one that is true.
Let $t\in p$ below the stem. If $t$ is of Type 1, denote $W\left(  t\right)
=\left\{  im\left(  g^{t}\right)  \right\}  $ and if it is of Type 2, denote
$W\left(  t\right)  =\{im\left(  h_{i}^{t}\right)  \mid i\in$ \textsf{suc}%
$_{p}\left(  t\right)  \}.$ Since $\mathcal{I}$ is a tall \textsf{P}-ideal, we
can find a single $B\in\mathcal{I}$ such that $B$ has infinite intersection
with every element of each $W\left(  t\right)  $ for all $t\in p$ below the
stem. Recursively, we can now find $q\leq_{0}p$ such that for every $t\in q$
below the stem, we have the following:

\begin{enumerate}
\item If $t$ is of Type 1, then the image of $g^{t}\upharpoonright
$\textsf{suc}$_{q}\left(  t\right)  $ is contained in $B.$

\item If $t$ is of Type 2, then for every $i\in$ \textsf{suc}$_{q}\left(
t\right)  ,$ we have that the image of $h_{i}^{t}\upharpoonright$%
\textsf{suc}$_{q}\left(  t^{\frown}i\right)  $ is contained in $B.$
\end{enumerate}

\qquad\qquad\ \ \ \ \ 

In case the stem of $p$ is empty, we get that $q\Vdash$\textquotedblleft%
$\dot{X}\subseteq B$\textquotedblright\ and we are done. If the stem of $p$
has size $m+1,$ we get that $q\Vdash$\textquotedblleft$\dot{X}\setminus\dot
{l}_{gen}\left(  m\right)  \subseteq B$\textquotedblright, so $q\Vdash
$\textquotedblleft$\dot{X}\subseteq B\cup\dot{l}_{gen}\left(  m\right)
$\textquotedblright\ and we are done.
\end{proof}

\qquad\qquad\qquad\ \ \ \qquad\ \ \ \qquad\ \ \ \ \ \ 

To handle the iteration, we need a slightly stronger result. The proof of the
following proposition is essentially the same as that of the previous one,
with only a slight increase in notational complexity.

\begin{proposition}
Let $\mathcal{I}$ be a tall \textsf{P}-ideal, $p\in\mathbb{L}$ and $\dot{H}$
be a $\mathbb{L}$-name for a function such that $p\Vdash$\textquotedblleft%
$\forall n\in\omega(\dot{H}\left(  n\right)  \in\left[  P_{gen}\left(
n\right)  \right]  ^{\leq n+1})$\textquotedblright$.$ There is $q\leq_{0}p$
and $B\in\mathcal{I}$ such that $q\Vdash$\textquotedblleft$%
{\textstyle\bigcup\limits_{n\in\omega}}
\dot{H}\left(  n\right)  \subseteq B$\textquotedblright.
\label{no rapid selectors}
\end{proposition}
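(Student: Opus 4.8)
The plan is to adapt the proof of Proposition~\ref{Laver 1 paso} almost verbatim, tracking finitely many values per level instead of a single one. First I would apply a strengthened version of the key lemma to $\dot H$: for each $n$ and each $t\in p$ below the stem with $|t|=n$, the name $\dot H(n)$ is a subset of $\dot P_{gen}(n)$ of size at most $n+1$, and $\dot P_{gen}(n)$ is decided only two levels below $t$ (it equals $[i,j)$ on $p_{t^\frown i^\frown j}$). So by the pure decision property applied to each $p_z$ with $|z|=n+2$ — there are only finitely many possible values of $\dot H(n)$ once $\dot P_{gen}(n)$ is fixed to $[i,j)$ — we can prune to $r\le_0 p$ so that $\dot H(n)$ is decided at level $n+2$, say $\dot H(n)=a_z$ for $z\in r$ with $|z|=n+2$. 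Running the same Ramsey-type dichotomy on the finite sets $a_z$ as a function of $z(n)$, we pass to $\overline r\le_0 r$ so that for every $t$ below the stem, $t$ is of ``Type 1'' (the value $a_{t^\frown i}:=\dot H(n)$ depends only on $i\in \mathsf{suc}_{\overline r}(t)$, and these sets are pairwise disjoint for distinct $i$ — because each lies inside $[i,l_{gen}(n+1))$ which is above $i$) or of ``Type 2'' (the value depends genuinely on the pair $(i,j)$, again with disjointness for distinct $j$ once $i$ is fixed). The argument that Type~1 with $a_i=a_{i'}$ for infinitely many distinct $i,i'$ is impossible is the same as before: if $i>k$ then $p_{t^\frown i}$ forces every element of $\dot H(n)$ to be $\ge i>k$, so only finitely many columns $i$ can produce a set meeting $\{0,\dots,k\}$; hence Type~1 is ``locally finite'' in the sense needed.

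Next I would set up the witnessing families for tallness of $\mathcal I$. For $t$ below the stem of Type~1, let $W(t)=\{\bigcup_i a_{t^\frown i}\}$... — but this union could be co-finite, so instead take $W(t)$ to be the collection of the infinitely many \emph{pairwise disjoint} infinite sets one extracts: since for distinct $i\in\mathsf{suc}_{\overline r}(t)$ the sets $a_{t^\frown i}$ are disjoint and each has size $\le n+1$, and each $\min a_{t^\frown i}\to\infty$, one builds a single infinite set $V(t)$ meeting infinitely many of them — more carefully, one wants $\mathcal I$ to contain a set $B$ that meets $V(t)$ infinitely often \emph{and}, for that infinite sub-collection of columns, actually swallows the whole of $a_{t^\frown i}$ for cofinally many $i$. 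The clean way: since the $a_{t^\frown i}$ are finite, disjoint, with $\min\to\infty$, their union $U(t)=\bigcup_i a_{t^\frown i}$ is infinite; by tallness of $\mathcal I$ there is $Y_t\in\mathcal I$ with $Y_t\cap U(t)$ infinite, and then infinitely many columns $i$ have $a_{t^\frown i}\subseteq Y_t$ is \emph{not} automatic — but infinitely many have $a_{t^\frown i}\cap Y_t\ne\emptyset$, and we only need one element per column in the end, since along the generic branch the contribution of level $n$ to $\bigcup_n \dot H(n)$ is a \emph{single} $a_{t^\frown i}$ of size $\le n+1$. Here is the point I would actually use: because $\mathcal I$ is a \textsf{P}-ideal, collect all the countably many sets $U(t)$ and $U(t,i):=\bigcup_j a_{t^\frown i^\frown j}$ (Type~2), apply tallness to each to get $Y$'s in $\mathcal I$, and then use the \textsf{P}-property to find a single $B\in\mathcal I$ almost containing all of them — no, almost-containing is too strong. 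Let me instead mirror the original proof's phrasing exactly: ``Since $\mathcal I$ is a tall \textsf{P}-ideal, we can find a single $B\in\mathcal I$ such that $B$ has infinite intersection with every element of each $W(t)$'' where now $W(t)$ is the countable family $\{\text{range of the }i\mapsto a_{t^\frown i}\text{-selector}\}$ under any fixed choice function selecting one point from each $a_{t^\frown i}$; combining over all $t$ and all choice functions is still countably many sets (for a fixed reasonable enumeration), so the tall \textsf{P}-ideal produces $B$.

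With such a $B$ fixed, I would recursively thin $p$ to $q\le_0 p$: at a Type~1 node $t$, keep only those $i\in\mathsf{suc}(t)$ for which $a_{t^\frown i}\subseteq B$ — infinitely many such $i$ exist because $B$ meets $U(t)$ in an infinite set, and... again the subtlety: $B\cap U(t)$ infinite does not give infinitely many \emph{whole} columns inside $B$. This is the genuine gap versus Proposition~\ref{Laver 1 paso}, where each column contributed a single point $g^t(i)$ rather than a set of size $n+1$. I expect this to be \textbf{the main obstacle}, and the fix is to strengthen the use of tallness+\textsf{P} to: since for each $t$ the family $\{a_{t^\frown i}:i\in\mathsf{suc}(t)\}$ consists of finite pairwise-disjoint sets, for \emph{any} $B\in\mathcal I^+$... no — what we need is that $\mathcal I$ is \emph{not} merely tall but that the complement of a single $\mathcal I$-set can be avoided by infinitely many whole columns, which fails in general. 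The correct strengthening, which I would invoke, is the observation that $\mathcal I$ being a tall \textsf{P}-ideal makes it $\omega$-hitting (stated in the preliminaries: ``A tall \textsf{P}-ideal is $\omega$-hitting''), and more to the point one enumerates, for each $t$, the sets $a_{t^\frown i}$ (countably many finite sets over all $t$ and $i$), but since a \textsf{P}-ideal need not contain infinitely many of a disjoint finite family, the real resolution is: we do \emph{not} need whole columns — along the generic, $\bigcup_n\dot H(n)=\bigcup_n a_{(l_{gen}\restriction n)^\frown l_{gen}(n)}$ picks one column per level, total size $\le\sum_{n\le N}(n+1)$ on an initial segment, which is finite, so it suffices that $B$ contains $a_{t^\frown i}$ for the chosen branch nodes; thinning $q$ so that $\mathsf{suc}_q(t)=\{i:a_{t^\frown i}\subseteq B\}$ requires this set infinite. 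To guarantee that, replace ``tall'' in the application: enumerate $\{a_{t^\frown i}\}_{i}$ and note that their union is in $\mathcal I^+$; but we want it \emph{in} $\mathcal I$ up to finitely many columns. Hence I would actually prove a preliminary reduction: since $|a_{t^\frown i}|\le n+1$ grows slowly while $\min a_{t^\frown i}\to\infty$, one may first thin $p$ so that the sets $a_{t^\frown i}$ for $i\in\mathsf{suc}_p(t)$ are not only disjoint but ``spread out'', and then the countable family $\{a_{t^\frown i}:t\text{ below stem},\,i\in\mathsf{suc}_p(t)\}$ can be handled: by tallness repeatedly and \textsf{P}-ness, get $B\in\mathcal I$ with $\{i:a_{t^\frown i}\cap B\ne\emptyset\}$ infinite for each $t$ — then thin $q$ to those $i$ and, crucially, also shrink $\dot H$: we do not need all of $a_{t^\frown i}$ in $B$, we only need $\dot H(n)\subseteq B$ along the generic, which by further appeal to the pure decision property (deciding which $\le n+1$ points of $[i,j)$ constitute $\dot H(n)$) lets us select, column by column, the right branch. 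In short, I would (1) reduce $\dot H$ to a finitely-decided form two levels down, (2) run the Type~1/Type~2 Ramsey dichotomy, (3) note Type~1 is locally finite via the $i>k\Rightarrow \dot H(n)\ge i$ trick, (4) use the tall \textsf{P}-ideal to capture a single $B$ meeting all the relevant countably-many finite-set families infinitely often, (5) thin $q\le_0 p$ to the columns captured by $B$, and (6) conclude $q\Vdash\bigcup_n\dot H(n)\subseteq B\cup \dot l_{gen}(m)$ exactly as before, absorbing the stem. The only real work beyond Proposition~\ref{Laver 1 paso} is the bookkeeping in (4)--(5) to pass from ``meets infinitely often'' to ``contains the generic column'', and I would flag that as where the argument needs the slight extra care the authors allude to.
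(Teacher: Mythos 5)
Your skeleton (decide $\dot H(n)$ two levels below a node of length $n$ via pure decision, run the Type 1/Type 2 dichotomy, use the observation that $i>k$ forces $\min\dot H(n)\geq i>k$, capture with a single $B\in\mathcal I$, prune to $q\leq_0 p$, absorb the finitely many levels below the stem) is exactly the adaptation of Proposition \ref{Laver 1 paso} that the authors have in mind. However, you never close the one step that is genuinely new, and the patches you float for it do not work. After the pruning, on $q_{t^\frown i}$ the \emph{entire} finite set $a_{t^\frown i}$ is forced to equal $\dot H(n)$; you are not free to ``shrink $\dot H$'' or to ``select the right branch'', so to force $\dot H(n)\subseteq B$ you must keep infinitely many successors $i$ with $a_{t^\frown i}\subseteq B$, and a $B$ that merely meets $\bigcup_i a_{t^\frown i}$ infinitely often (or meets infinitely many blocks in one point) is not enough. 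Your other suggestion --- hitting the ranges of ``all choice functions'' through the blocks --- also fails, since there are continuum many such choice functions, not countably many.

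The missing observation is that for a fixed node $t$ of length $n$ the blocks attached to its successors all have size at most $n+1$, a \emph{fixed finite bound}, so tallness alone already swallows whole blocks: first extract an infinite pairwise disjoint subfamily of the $a_{t^\frown i}$ (possible because $a_{t^\frown i}\subseteq[i,\infty)$, so the minima tend to infinity), enumerate each block as at most $n+1$ ``rows'', and apply tallness at most $n+1$ times, each time shrinking the index set to those blocks whose current row is hit; the union of the $\leq n+1$ witnesses is a set $B_t\in\mathcal I$ containing infinitely many whole blocks $a_{t^\frown i}$ (and similarly $B_{t,i}$ for Type 2 nodes). Now the \textsf{P}-property merges these countably many sets into a single $B\in\mathcal I$ with $B_t\subseteq^{\ast}B$ for all of them; since the swallowed blocks inside a fixed $B_t$ are pairwise disjoint, the finitely many points of $B_t\setminus B$ spoil only finitely many of them, so $B$ still contains infinitely many whole blocks at every node. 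With this $B$ the recursive thinning and the conclusion $q\Vdash\textquotedblleft\bigcup_{n}\dot H(n)\subseteq B\cup l_{gen}(m)\textquotedblright$ proceed word for word as in Proposition \ref{Laver 1 paso}. So the gap is real but local, and this is precisely the ``slight increase in notational complexity'' the paper alludes to.
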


We now recall the following notion:

\begin{definition}
Let $\mathbb{P}$ be a partial order. We say $\mathbb{P}$ has the \emph{Laver
property }if for every $p\in\mathbb{P}$ and $\dot{f}$ a $\mathbb{P}$-name for
a bounded function, there are $q\leq p$ and $S:\omega\longrightarrow\left[
\omega\right]  ^{<\omega}$ such that or every $n\in\omega$ we have that
$\left\vert S\left(  n\right)  \right\vert \leq n+1$ and $q\Vdash
$\textquotedblleft$\dot{f}\left(  n\right)  \in S\left(  n\right)
$\textquotedblright.
\end{definition}

\qquad\qquad\qquad\qquad\qquad\qquad\qquad

It is well-known that Laver forcing, as well as its iterations, have the Laver
property (see \cite{Barty} or \cite{ProperandImproper}). With Proposition
\ref{no rapid selectors}, we conclude the following:

\begin{proposition}
Let $\mathcal{I}$ be a tall \textsf{P}-ideal and $\mathbb{\dot{P}}$ be an
$\mathbb{L}$-name for a forcing notion with the Laver property. $\mathbb{L\ast
\dot{P}}$ forces that every selector of $P_{gen}$ is in $\mathcal{I}.$
\label{laver prop selectores chicos}
\end{proposition}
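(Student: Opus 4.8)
The plan is to combine Proposition \ref{no rapid selectors} with the Laver property of $\dot{\mathbb{P}}$ inside the iteration $\mathbb{L}\ast\dot{\mathbb{P}}$. Fix a condition $(p,\dot q)\in\mathbb{L}\ast\dot{\mathbb{P}}$ and an $(\mathbb{L}\ast\dot{\mathbb{P}})$-name $\dot X$ for a selector of $P_{gen}$. First I would note that $\dot X$ is, after forcing with $\mathbb{L}$, a $\dot{\mathbb{P}}$-name for a function which on column $n$ of $P_{gen}$ picks a single value, hence a bounded function relative to the parameters $l_{gen}(n),l_{gen}(n+1)$ (more precisely, identifying $P_{gen}(n)$ with an interval, the $n$-th value of $\dot X$ lies in a set whose size is controlled by $\mathbb{L}$-data). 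Working in the $\mathbb{L}$-extension, apply the Laver property of $\dot{\mathbb{P}}$ to the name $\dot X$: we obtain $\dot q'\leq\dot q$ and a ground-model-coded (i.e. coded in the $\mathbb{L}$-extension) slalom $\dot S$ with $|\dot S(n)|\leq n+1$ and $\dot q'\Vdash_{\dot{\mathbb{P}}}$``$\dot X(n)\in\dot S(n)$'' for all $n$. Pulling this back, $\dot S$ is an $\mathbb{L}$-name for a function with $p\Vdash_{\mathbb{L}}$``$\forall n\,(\dot S(n)\in[\,P_{gen}(n)\,]^{\leq n+1})$''; here I use that a selector takes values inside the columns, so after intersecting $\dot S(n)$ with $P_{gen}(n)$ (which only shrinks it) we may assume $\dot S(n)\subseteq P_{gen}(n)$.

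Next I would invoke Proposition \ref{no rapid selectors} with this $\dot H:=\dot S$: since $\mathcal I$ is a tall $\textsf{P}$-ideal, there are $p'\leq_0 p$ in $\mathbb{L}$ and $B\in\mathcal I$ with $p'\Vdash_{\mathbb{L}}$``$\bigcup_{n}\dot S(n)\subseteq B$''. Then $(p',\dot q')\leq(p,\dot q)$ in $\mathbb{L}\ast\dot{\mathbb{P}}$ and $(p',\dot q')\Vdash$``$\dot X\subseteq\bigcup_n\dot S(n)\subseteq B$'', with $B\in\mathcal I$. Since $(p,\dot q)$ and $\dot X$ were arbitrary, the set of conditions forcing some such inclusion into a member of $\mathcal I$ is dense, which is exactly the statement that $\mathbb{L}\ast\dot{\mathbb{P}}$ forces every selector of $P_{gen}$ to lie in $\mathcal I$ (note $\mathcal I$ is a $\textsf{P}$-ideal, hence of uncountable cofinality, so it is not destroyed as an ideal, and "$\dot X\subseteq B\in\mathcal I$'' genuinely says $\dot X\in\mathcal I$ in the extension).

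The step I expect to be the main obstacle is the bookkeeping that turns $\dot X$ into a legitimate input for the Laver property: one must check that after forcing with $\mathbb{L}$ the remaining name $\dot X$ is \emph{bounded} in the sense required by the definition of the Laver property. This is where the increasing-function convention on Laver trees and the structure of $P_{gen}$ matter — the $n$-th value of a selector lies in $P_{gen}(n)=[l_{gen}(n),l_{gen}(n+1))$, and although $l_{gen}$ is not in the ground model, within the $\mathbb{L}$-extension $P_{gen}$ is a fixed interval partition, so $\dot X$ is literally a bounded function there and the Laver property applies verbatim. A secondary point to be careful about is that the slalom produced by the Laver property is coded in the $\mathbb{L}$-extension, not the ground model, so when I "pull it back" I genuinely get an $\mathbb{L}$-name $\dot S$ as in the hypothesis of Proposition \ref{no rapid selectors}; this is fine, since that proposition is stated purely about $\mathbb{L}$-names. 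Everything else — composing the two reductions $p\geq_0 p'$ in $\mathbb{L}$ and $\dot q\geq\dot q'$ in $\dot{\mathbb{P}}$ into a single reduction in the two-step iteration, and the density argument — is routine.
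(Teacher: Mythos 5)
Your argument is exactly the one the paper intends: the paper derives this proposition directly from Proposition \ref{no rapid selectors} together with the Laver property of $\dot{\mathbb{P}}$, applying the Laver property in the intermediate $\mathbb{L}$-extension to cover the selector by a slalom $\dot S$ with $\dot S(n)\in[P_{gen}(n)]^{\leq n+1}$ and then invoking Proposition \ref{no rapid selectors} on that name. Your handling of the bookkeeping (boundedness of the selector inside the $\mathbb{L}$-extension, the slalom being coded by an $\mathbb{L}$-name, and the density argument in the two-step iteration) is correct and matches the intended proof.
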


\qquad\qquad\qquad\ \ \ 

By the \emph{Laver model, }we mean a model obtained by iterating Laver forcing
with countable support $\omega_{2}$ many times over a model of \textsf{CH}. We
can now prove the following:

\begin{theorem}
In the Laver model, every tall \textsf{P}-ideal is Kat\v{e}tov above
$\mathcal{ED}_{\text{\textsf{fin}}}.$ Hence, the Category Dichotomy for tall
\textsf{P}-ideals holds. \label{In the Laver model}
\end{theorem}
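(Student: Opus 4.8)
The plan is to derive the theorem from Proposition~\ref{laver prop selectores chicos} by a standard reflection/absorption argument. Suppose $\mathcal{I}$ is a tall \textsf{P}-ideal in the Laver model $V[G]$, where $G$ is $\mathbb{L}_{\omega_2}$-generic over a model $V\models\textsf{CH}$. Since $\mathbb{L}_{\omega_2}$ is proper and has the $\aleph_2$-c.c., and since $\mathfrak{c}=\aleph_2$ in the Laver model, I would first argue that $\mathcal{I}$ (or at least a cofinal $\subseteq$-directed subfamily of it of size $\aleph_1$ witnessing its relevant properties) is added at some intermediate stage $\alpha<\omega_2$ of the iteration. More precisely, using a Löwenheim–Skolem / reflection argument on a cofinal family of size $\le\aleph_1$, there is $\alpha<\omega_2$ such that $\mathcal{I}\cap V[G_\alpha]$ generates $\mathcal{I}$ (its downward closure under almost-inclusion and its \textsf{P}-ideal structure are captured in $V[G_\alpha]$), so that in $V[G_\alpha]$ there is a tall \textsf{P}-ideal $\mathcal{I}_0$ with $\mathcal{I}_0\subseteq\mathcal{I}$ and every element of $\mathcal{I}$ almost-contained in one of $\mathcal{I}_0$.

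Next, I would factor the iteration as $\mathbb{L}_{\omega_2}=\mathbb{L}_\alpha * \dot{\mathbb{Q}}$, where $\dot{\mathbb{Q}}$ is the $\mathbb{L}_\alpha$-name for the remaining tail of the countable-support Laver iteration. The tail $\dot{\mathbb{Q}}$ begins with one step of Laver forcing $\mathbb{L}$ followed by a further countable-support Laver iteration, and the latter has the Laver property (by the cited fact that Laver forcing and its CS iterations have the Laver property). So $\dot{\mathbb{Q}}$ has the form $\mathbb{L}*\dot{\mathbb{P}}$ with $\dot{\mathbb{P}}$ an $\mathbb{L}$-name for a forcing with the Laver property, working over the model $V[G_\alpha]$. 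Apply Proposition~\ref{laver prop selectores chicos} in $V[G_\alpha]$ to the tall \textsf{P}-ideal $\mathcal{I}_0$: it gives that $\mathbb{L}*\dot{\mathbb{P}}$ forces every selector of the corresponding generic Laver interval partition $P_{gen}$ (the one coming from the $\alpha$-th Laver real) to lie in $\mathcal{I}_0$, hence in $\mathcal{I}$.

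Now let $P = P_{gen}$ be this interval partition living in $V[G]$; it is an increasing interval partition of a cofinite subset of $\omega$. By the preceding paragraph, in $V[G]$ every selector of $P$ belongs to $\mathcal{I}$, i.e.\ $\mathcal{ED}_P\subseteq\mathcal{I}$. By Lemma~\ref{Lema Katetov basico}(1), $\mathcal{ED}_P\le_{\textsf{K}}\mathcal{I}$. Since $\mathcal{ED}_P$ is isomorphic to $\mathcal{ED}_{\textsf{fin}}$ (by the Proposition stating that $\mathcal{ED}_P$ and $\mathcal{ED}_R$ are isomorphic for any two increasing interval partitions, applied with $R$ the canonical partition defining $\mathcal{ED}_{\textsf{fin}}$, after harmlessly adjusting for the missing finite initial piece), we conclude $\mathcal{ED}_{\textsf{fin}}\le_{\textsf{K}}\mathcal{I}$. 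For the last sentence: combining with Lemma~\ref{Lema tight no abajo}(2), every \textsf{P}-ideal already satisfies $\textsf{fin}\times\textsf{fin}\nleq_{\textsf{K}}\mathcal{I}$, and if in addition $\mathcal{I}$ is tall it is $\omega$-hitting hence tight, so by Lemma~\ref{Lema tight no abajo}(1) $\mathcal{I}\nleq_{\textsf{K}}\textsf{nwd}$; thus the only way $\mathcal{I}$ can satisfy the Category Dichotomy is via alternative (2), which we have just verified holds (with $X=\omega$, using $\mathcal{ED}\le_{\textsf{KB}}\mathcal{ED}_{\textsf{fin}}$).

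\textbf{The main obstacle} I expect is the reflection step: justifying that a tall \textsf{P}-ideal $\mathcal{I}$ in the final model is, up to the equivalence that matters (generation under $\subseteq^*$ plus closure under countable pseudounions), already present and still a tall \textsf{P}-ideal at an intermediate stage $V[G_\alpha]$ to which the tail factorization applies. This requires care: one must take a cofinal subfamily of size $\le\mathfrak{c}=\aleph_2$ and use the $\aleph_2$-chain condition plus a cofinality argument — but a tall \textsf{P}-ideal need not have a cofinal subfamily of size $\aleph_1$, so the reflection must instead be run on names, using that $\mathbb{L}_{\omega_2}$ is the CS iteration and every real (hence every subset of $\omega$) appears at a stage of countable cofinality below $\omega_2$, together with an elementary-submodel argument to find a single $\alpha$ past which all the relevant witnesses for tallness and the \textsf{P}-property lie. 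Once this is set up correctly, the rest is bookkeeping with the propositions already proved.
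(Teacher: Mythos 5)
Your proposal is correct and follows the paper's own (very terse) proof of Theorem \ref{In the Laver model}: the paper likewise derives it from Proposition \ref{laver prop selectores chicos} together with the asserted fact that the ideal reflects to a tall \textsf{P}-ideal in some intermediate extension, followed by the tail factorization $\mathbb{L}_{\omega_2}=\mathbb{L}_\alpha\ast(\mathbb{L}\ast\dot{\mathbb{P}})$ with $\dot{\mathbb{P}}$ having the Laver property. Two small corrections to your write-up: the claim in your first paragraph that $\mathcal{I}\cap V[G_\alpha]$ generates $\mathcal{I}$ is false in general and never used (all your later steps need is a tall \textsf{P}-ideal $\mathcal{I}_0\in V[G_\alpha]$ with $\mathcal{I}_0\subseteq\mathcal{I}$, so that every selector is contained in some ground-model member of $\mathcal{I}_0$ and hence lies in $\mathcal{I}$ by downward closure), and in the reflection one should land at a stage $\alpha$ of cofinality $\omega_1$ (e.g.\ $\alpha=M\cap\omega_2$ for a suitable elementary submodel $M$ of size $\aleph_1$), since countable-support proper iterations add no new reals at such limits, whereas closing off along a countably cofinal sequence can leave reals appearing at stage $\alpha$ itself without tallness or \textsf{P}-witnesses in the trace; finally, to conclude $\mathcal{ED}_{\text{\textsf{fin}}}\le_{\text{\textsf{KB}}}\mathcal{I}$ you can sidestep the question of whether $P_{gen}$ is an increasing partition (needed for the isomorphism proposition you cite) by instead quoting the Table 2 characterization: a partition of $\omega$ into finite pieces all of whose selectors lie in $\mathcal{I}$ already gives $\mathcal{ED}_{\text{\textsf{fin}}}\le_{\text{\textsf{KB}}}\mathcal{I}$.
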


\begin{proof}
Follows by Proposition \ref{laver prop selectores chicos} and the fact that
for every tall \textsf{P}-ideal in the final extension, we can find an
intermediate extension in which it is also a tall \textsf{P}-ideal.
\end{proof}

\qquad\ \ \ \ \ \ \ \ 

We do not know if we can extend Theorem \ref{In the Laver model} for $\omega
$-hitting ideals. While Proposition \ref{Laver 1 paso} is also true for them,
it is not clear if Proposition \ref{no rapid selectors} holds as well. We ask
the following:

\begin{problem}
\qquad\ \ \ 

\begin{enumerate}
\item Is it true that in the Laver model, every $\omega$-hitting ideal is
Kat\v{e}tov above $\mathcal{ED}_{\text{\textsf{fin}}}$?

\item Is it consistent that the Category Dichotomy holds for the $\omega
$-hitting ideals?
\end{enumerate}
\end{problem}

\section{Every non meager ideal may be above $\mathcal{ED}_{\text{\textsf{fin}%
}}$ \label{no magro y Q mas}}

In this short section, we will prove that it is consistent that every non
meager ideal is Kat\v{e}tov-Blass above $\mathcal{ED}_{\text{\textsf{fin}}}$
(in particular, it is not \textsf{Q}$^{+}$). Note that by Proposition
\ref{Prop preimagen analiticos}, we have the following:

\begin{corollary}
The following are equivalent:

\begin{enumerate}
\item The Category Dichotomy for non-meager ideals.

\item Every non-meager ideal has a restriction which is Kat\v{e}tov above
$\mathcal{ED}.$ 
\end{enumerate}
\end{corollary}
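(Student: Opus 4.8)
The plan is to observe that for a non-meager ideal the first alternative of the Category Dichotomy is automatically vacuous, so that the dichotomy for this class collapses to the second alternative.

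The key point is that \textsf{nwd} is a Borel, hence analytic, ideal on $\mathbb{Q}$ (equivalently, on a countable set). Consequently, by Proposition \ref{Prop preimagen analiticos}, any ideal $\mathcal{J}$ with $\mathcal{J}\leq_{\text{\textsf{K}}}$ \textsf{nwd} must be meager; contrapositively, no non-meager ideal lies Kat\v{e}tov below \textsf{nwd}. (If one prefers, one can invoke Lemma \ref{Lema tight no abajo} or the entry of Table 2 identifying $\mathcal{I}\nleq_{\text{\textsf{K}}}$ \textsf{nwd} with $\mathbb{C}$-indestructibility, but Proposition \ref{Prop preimagen analiticos} is the cleanest route.)

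For $(1)\Rightarrow(2)$: assume the Category Dichotomy holds for the class of non-meager ideals and fix a non-meager ideal $\mathcal{I}$. The dichotomy yields that either $\mathcal{I}\leq_{\text{\textsf{K}}}$ \textsf{nwd} or there is $X\in\mathcal{I}^{+}$ with $\mathcal{ED}\leq_{\text{\textsf{K}}}\mathcal{I}\upharpoonright X$. The first disjunct is ruled out by the previous paragraph, so the second must hold, which is exactly $(2)$. For $(2)\Rightarrow(1)$: this is immediate, since $(2)$ asserts precisely that every non-meager ideal realizes the second alternative of the dichotomy, and realizing one of the two alternatives is all that the dichotomy demands.

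There is no substantial obstacle here; the statement is a formal consequence of Proposition \ref{Prop preimagen analiticos} together with the analyticity of \textsf{nwd}. The only subtlety worth recording is that the two alternatives of the dichotomy are not mutually exclusive, so passing from $(1)$ to $(2)$ is not a matter of ``eliminating'' a case but of noting that one of the two cases simply cannot occur for a non-meager ideal.
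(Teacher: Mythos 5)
Your proposal is correct and follows exactly the paper's (implicit) argument: the corollary is stated as an immediate consequence of Proposition \ref{Prop preimagen analiticos}, since \textsf{nwd} is Borel, hence analytic, so no non-meager ideal can be Kat\v{e}tov below it, and the dichotomy for this class reduces to its second alternative.
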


\qquad\qquad\qquad\qquad\ \ \ \ \ \qquad\qquad

The dual of a Ramsey ultrafilter is a counterexample for the Category
Dichotomy for non-meager ideals. Interestingly, the dichotomy for this class
is consistent. We need to recall the following important principle:

\begin{center}%
\begin{tabular}
[c]{c}%
\textbf{Filter Dichotomy}\\
\\
Let $\mathcal{I}$ be an ideal on $\omega.$ Either $\mathcal{I}$ is meager of
there is\\
an ultrafilter $\mathcal{U}$ such that $\mathcal{U}^{\ast}$ $\leq
_{\text{\textsf{RB}}}\mathcal{I}.$%
\end{tabular}

\qquad\ \ \ \ \ \qquad\ \ \ 
\end{center}

In \cite{InequivalentGrowthTypes} Blass and Laflamme proved that the Filter
Dichotomy holds in the Miller and Blass-Shelah models. We need also to mention
that Laflamme Zhou showed in \cite{RudinBlassOrderingofUltrafilters} that the
Filter Dichotomy implies that there are no \textsf{Q}-points. The main result
of this section easily follows:

\begin{theorem}
The Filter Dichotomy implies that every non meager ideal is Kat\v{e}tov-Blass
above $\mathcal{ED}_{\text{\textsf{fin}}}.$ In particular, it implies the
Category Dichotomy for non-meager ideals.
\end{theorem}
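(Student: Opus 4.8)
The plan is to combine the Filter Dichotomy with a direct analysis of the two cases it provides. Let $\mathcal{I}$ be a non-meager ideal on $\omega$. The Filter Dichotomy says $\mathcal{I}$ is either meager or there is an ultrafilter $\mathcal{U}$ with $\mathcal{U}^{\ast}\leq_{\text{\textsf{RB}}}\mathcal{I}$. Since $\mathcal{I}$ is non-meager, the first alternative fails, so we are in the second case and fix such a $\mathcal{U}$ together with a finite-to-one function $f\in\omega^{\omega}$ witnessing $\mathcal{U}^{\ast}\leq_{\text{\textsf{RB}}}\mathcal{I}$ (so $A\in\mathcal{U}^{\ast}$ iff $f^{-1}(A)\in\mathcal{I}$, and $f$ is finite-to-one).

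The key reduction is to show $\mathcal{ED}_{\text{\textsf{fin}}}\leq_{\text{\textsf{KB}}}\mathcal{U}^{\ast}$ for every ultrafilter $\mathcal{U}$; then, since $\leq_{\text{\textsf{KB}}}$ composes, $\mathcal{ED}_{\text{\textsf{fin}}}\leq_{\text{\textsf{KB}}}\mathcal{I}$ follows. By the characterization in Table 2, $\mathcal{ED}_{\text{\textsf{fin}}}\nleq_{\text{\textsf{KB}}}\mathcal{I}$ is equivalent to: every partition of $\omega$ into finite pieces has a selector in $\mathcal{I}^{+}$. Applied to $\mathcal{U}^{\ast}$, this would say every partition of $\omega$ into finite pieces has a selector in $(\mathcal{U}^{\ast})^{+}=\mathcal{U}$, i.e.\ $\mathcal{U}$ is a \textsf{Q}-point. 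So $\mathcal{ED}_{\text{\textsf{fin}}}\leq_{\text{\textsf{KB}}}\mathcal{U}^{\ast}$ holds precisely when $\mathcal{U}$ is not a \textsf{Q}-point. Here is where I would invoke the result of Laflamme--Zhou quoted just above the statement: the Filter Dichotomy implies there are no \textsf{Q}-points. Hence the ultrafilter $\mathcal{U}$ furnished by the dichotomy is not a \textsf{Q}-point, so $\mathcal{ED}_{\text{\textsf{fin}}}\leq_{\text{\textsf{KB}}}\mathcal{U}^{\ast}$, and composing with the Rudin--Blass (hence Kat\v{e}tov--Blass, hence in particular not obstructed) witness $f$ gives $\mathcal{ED}_{\text{\textsf{fin}}}\leq_{\text{\textsf{KB}}}\mathcal{I}$.

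For the final sentence of the theorem, note that $\mathcal{ED}\leq_{\text{\textsf{KB}}}\mathcal{ED}_{\text{\textsf{fin}}}$ (Lemma stated in the excerpt), so $\mathcal{ED}\leq_{\text{\textsf{K}}}\mathcal{I}$, which immediately gives alternative (2) of the Category Dichotomy for $\mathcal{I}$ (with $X=\omega$). Thus every non-meager ideal satisfies the dichotomy. One small point to check carefully is that a Rudin--Blass reduction $\mathcal{U}^{\ast}\leq_{\text{\textsf{RB}}}\mathcal{I}$ really yields $\mathcal{J}\leq_{\text{\textsf{KB}}}\mathcal{I}$ whenever $\mathcal{J}\leq_{\text{\textsf{KB}}}\mathcal{U}^{\ast}$ — this is just that a Rudin--Blass function is in particular a Kat\v{e}tov--Blass function and that composition of finite-to-one Kat\v{e}tov functions is again a finite-to-one Kat\v{e}tov function.

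The main obstacle is conceptual rather than computational: recognizing that the hypothesis ``no \textsf{Q}-points'' (a consequence of Filter Dichotomy) is exactly what converts the Rudin--Blass-below-an-ultrafilter conclusion into ``Kat\v{e}tov--Blass above $\mathcal{ED}_{\text{\textsf{fin}}}$''. Everything else is a routine unwinding of the combinatorial characterizations in Table 2 and the composition properties of the Kat\v{e}tov--Blass order. I would want to double-check that the ``partition into finite pieces has a selector in $\mathcal{I}^{+}$'' characterization applies verbatim to $\mathcal{U}^{\ast}$ (it does: $\mathcal{U}^{\ast}$ is a genuine ideal on $\omega$, and $(\mathcal{U}^{\ast})^{+}=\mathcal{U}$), and that ``\textsf{Q}-point'' in the Laflamme--Zhou statement matches ``$\mathcal{U}$ is \textsf{Q}$^{+}$ as an ultrafilter'', i.e.\ every partition of $\omega$ into finite sets admits a selector in $\mathcal{U}$ — it does, by the standard definition.
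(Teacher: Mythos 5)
Your proof is correct and follows essentially the same route as the paper: Filter Dichotomy gives an ultrafilter $\mathcal{U}$ with $\mathcal{U}^{\ast}\leq_{\text{\textsf{RB}}}\mathcal{I}$, the Laflamme--Zhou theorem rules out \textsf{Q}-points so $\mathcal{ED}_{\text{\textsf{fin}}}\leq_{\text{\textsf{KB}}}\mathcal{U}^{\ast}$, and composing the finite-to-one reductions yields $\mathcal{ED}_{\text{\textsf{fin}}}\leq_{\text{\textsf{KB}}}\mathcal{I}$. Your extra verifications (the Table 2 characterization applied to $\mathcal{U}^{\ast}$, composition of Kat\v{e}tov--Blass maps, and deducing the Category Dichotomy via $\mathcal{ED}\leq_{\text{\textsf{KB}}}\mathcal{ED}_{\text{\textsf{fin}}}$ with $X=\omega$) are just the details the paper leaves implicit.
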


\begin{proof}
Let $\mathcal{I}$ be a non meager ideal. By the Filter Dichotomy, there is
$\mathcal{U}$ an ultrafilter such that $\mathcal{U}^{\mathcal{\ast}}$
$\leq_{\text{\textsf{RB}}}\mathcal{I}.$ By the theorem of Laflamme and Zhou
mentioned above, we know that $\mathcal{U}$ is not a \textsf{Q}-point. In this
way, $\mathcal{ED}_{\text{\textsf{fin}}}\leq_{\text{\textsf{KB}}}%
\mathcal{U}^{\ast}\leq_{\text{\textsf{RB}}}\mathcal{I},$ hence $\mathcal{ED}%
_{\text{\textsf{fin}}}\leq_{\text{\textsf{KB}}}\mathcal{I}.$
\end{proof}

\qquad\ \ \ \qquad\ \ 

The previous result provides an enlightening observation: If we seek to find,
within \textsf{ZFC}, an ideal that serves as a counterexample to the Category
Dichotomy, such an ideal cannot be non-meager and cannot be analytic. In this
sense, it cannot be too large (non-meager) nor too small (analytic).

\section{The Category Dichotomy for nowhere dense ideals
\label{seccion ideales nwd}}

Let $\left(  X,\tau\right)  $ be a topological space. By \textsf{nwd}$\left(
X,\tau\right)  $ (or simply \textsf{nwd}$\left(  X\right)  $ if the topology
is clear from context) we denote the ideal of nowhere dense subsets of
$\left(  X,\tau\right)  .$ As expected, our main interest is where the space
is countable. In \cite{NWDCommonDivisionTop}, \cite{NWDPositiveIntegers} and
\cite{RelationsNWDtop} the authors study the nowhere dense ideals for
different topologies with a number theoretic flavor. We will explore
properties of a topological space that will ensure its ideal of nowhere dense
sets fails Category Dichotomy. For the convenience of the reader, we now
review the main topological notions that will be needed in this section.

\begin{definition}
Let $\left(  X,\tau\right)  $ be a topological space, $b\in X$ and
$\mathcal{B}$ a family of non-empty open sets of $X.$

\begin{enumerate}
\item $\left(  X,\tau\right)  $ is \emph{zero} \emph{dimensional }if it has a
base of clopen sets.

\item \textsf{Clop}$\left(  X\right)  $ denotes the family of clopen subsets
of $X.$

\item Let $A\subseteq X$ be countable. $A$ \emph{converges to} $b$ (denoted by
$A\longrightarrow b$) if every open subset of $b$ almost contains $A.$

\item $\left(  X,\tau\right)  $ is \emph{Fr\'{e}chet} if for every $a\in X$
and $Y\subseteq X$ such that $a\in\overline{Y},$ there is $A\in\left[
Y\right]  ^{\omega}$ that converges to $a.$

\item $\mathcal{B}$ is a $\pi$\emph{-base }if every non-empty subset of $X$
contains an element of $\mathcal{B}.$

\item The $\pi$\emph{-weight of} $X$ is the smallest size of a $\pi$-base of
$X.$

\item $\mathcal{B}$ is a \emph{local} $\pi$\emph{-base at }$b$ if every
neighborhood of $b$ contains an element of $\mathcal{B}.$

\item The $\pi$\emph{-character of} $b$ is the smallest size of a \emph{local}
$\pi$\emph{-base at }$b.$

\item $X$ \emph{has uncountable} $\pi$\emph{-character everywhere }if every
point of $X$ has uncountable $\pi$-character.
\end{enumerate}
\end{definition}

\qquad\qquad\qquad\ \ 

Recall that the closure of a nowhere dense set is also nowhere dense. In this
way, \textsf{nwd}$\left(  X\right)  $ is an ideal generated by closed sets.

\begin{proposition}
Let $X$ be a countable, Fr\'{e}chet space with no isolated points. The ideal
\textsf{nwd}$\left(  X\right)  $ is meager.
\end{proposition}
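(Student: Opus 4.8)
The goal is to show that $\mathsf{nwd}(X)$ is meager when $X$ is a countable, Fr\'echet space with no isolated points. By Theorem \ref{Teorema Talagrand} (Talagrand--Jalali-Naini), it suffices to produce a Talagrand partition of $X$: a partition $P=\{P_n\mid n\in\omega\}$ of $X$ into finite pieces such that any set containing infinitely many of the $P_n$ is $\mathsf{nwd}(X)$-positive, i.e.\ somewhere dense. Equivalently, I want a partition into finite pieces with the property that the union of any infinite subfamily is somewhere dense. The plan is to enumerate the countable space as $X=\{x_k\mid k\in\omega\}$ and build the pieces $P_n$ recursively so that each $P_n$ is ``spread out'' across the space — in particular, so that a small amount of each piece lands near every point we have seen so far.

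\medskip

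\textbf{Key steps.} First, fix an enumeration $X=\{x_k\mid k\in\omega\}$ and, using that $X$ has no isolated points together with the Fr\'echet property, note that for each $k$ the point $x_k$ is a limit of a sequence $S_k=\{s^k_j\mid j\in\omega\}$ with $S_k\longrightarrow x_k$ and $x_k\notin S_k$. The Fr\'echet property is what guarantees such convergent sequences exist: since $x_k$ has no isolated-point neighborhood, $x_k\in\overline{X\setminus\{x_k\}}$, so there is a countable $A\subseteq X\setminus\{x_k\}$ with $A\longrightarrow x_k$. Second, build the partition recursively: having defined $P_0,\dots,P_{n-1}$ (finite, pairwise disjoint), let $P_n$ be a finite set that (a) contains the least element of $X$ not yet used (so that $\bigcup_n P_n=X$), and (b) for every $k\le n$, contains some point of the convergent sequence $S_k$ that has not yet been used and lies inside the (finite-at-stage-$n$) list of basic open neighborhoods of $x_k$ we are tracking — concretely, pick one point of $S_k$ far enough along the sequence to avoid all previously used points. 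Since at stage $n$ we only need finitely many new points (one for coverage, one for each $k\le n$), each $P_n$ is finite.

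\medskip

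\textbf{Why it works.} Suppose $Y\subseteq X$ contains infinitely many of the $P_n$, say $P_{n_i}\subseteq Y$ for $i\in\omega$. I claim $Y$ is somewhere dense; in fact I claim $Y$ is dense in a nonempty open set — indeed I will argue $x_k\in\overline Y$ for every $k$, so $Y$ is dense in $X$ and hence $Y\in\mathsf{nwd}(X)^+$ (a dense set in a space with no isolated points is not nowhere dense). Fix $k$ and any open neighborhood $U$ of $x_k$. Since $S_k\longrightarrow x_k$, $U$ almost contains $S_k$, so $U$ contains all but finitely many $s^k_j$. By construction, for each $i$ with $n_i\ge k$, the piece $P_{n_i}$ contains a point $s^k_{j_i}$ of $S_k$, with the indices $j_i$ strictly increasing (because at each stage we chose a point of $S_k$ beyond all previously used ones). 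Thus infinitely many distinct points of $S_k$ lie in $Y$, so all but finitely many of them lie in $U$, giving $U\cap Y\neq\emptyset$. Hence $x_k\in\overline Y$; as $k$ was arbitrary, $\overline Y=X$, so $Y\notin\mathsf{nwd}(X)$. Therefore $P$ is a Talagrand partition and $\mathsf{nwd}(X)$ is meager.

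\medskip

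\textbf{Main obstacle.} The delicate point is purely bookkeeping: ensuring simultaneously that every point of $X$ eventually gets placed in some piece (so $P$ is genuinely a partition of $X$, not just of a subset) \emph{and} that each $P_n$ is finite \emph{and} that for each $k$ infinitely many pieces meet $S_k$ in fresh points. A clean way to organize this is a dovetailing scheme: at stage $n$, devote one slot of $P_n$ to $x_n$ (or the first un-placed point) and one slot to each demand $k\le n$, always choosing from $S_k$ an element with index larger than any index used so far for $S_k$; the fact that each $S_k$ is infinite and the previously used set is finite makes this always possible. No topology beyond Fr\'echet + no isolated points is needed — Fr\'echet supplies the convergent sequences and ``no isolated points'' ensures a dense set is positive — so once the recursion is set up the verification is exactly the paragraph above.
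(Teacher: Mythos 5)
Your proof is correct and follows essentially the same route as the paper: identify $X$ with a countable set, use Fr\'echet plus no isolated points to get a sequence converging to each point, build a partition into finite pieces each of which meets the first $n$ sequences, observe that a set containing infinitely many pieces is dense (hence positive), and invoke the Talagrand--Jalali-Naini theorem. The only cosmetic difference is that the paper arranges the pieces as intervals of the enumeration (matching the literal statement of the Talagrand partition definition), whereas you use arbitrary finite pieces chosen point by point; this changes nothing essential, though if one wants to quote the theorem verbatim it is slightly cleaner to take each $P_n$ to be an initial-segment interval long enough to contain the required fresh points.
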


\begin{proof}
We may assume that $X=\omega.$ Since our space is Fr\'{e}chet and has no
isolated points, for every $n\in\omega$, we can find $A_{n}\in\left[
\omega\right]  ^{\omega}$ that converges to $n.$ Now, find an interval
partition $P=\left\{  P_{n}\mid n\in\omega\right\}  $ such that $P_{n}\cap
A_{i}\neq\emptyset$ for every $n\in\omega$ and $i\leq n.$ It follows that
every $Y\subseteq\omega$ that contains infinitely many intervals of $P$, is
dense. In this way, $P$ is a Talagrand partition of \textsf{nwd}$\left(
X\right)  ,$ and by Theorem \ref{Teorema Talagrand}, we conclude that
\textsf{nwd}$\left(  X\right)  $ is meager.
\end{proof}

\qquad\qquad\qquad\qquad

By the last remark in the previous section, it is natural to seek a
counterexample to the Category Dichotomy within the class of nowhere dense
ideals for countable topologies.

\begin{proposition}
[Topological Disjoint Refinement Lemma]\label{top disjoint refinement}
\ \ \qquad\ 

Let $X$ be a zero dimensional space with uncountable $\pi$-character
everywhere. For every collection $\left\{  U_{n}\mid n\in\omega\right\}  $ of
non-empty open sets, there is a pairwise disjoint family of non-empty clopen
sets $\left\{  C_{n}\mid n\in\omega\right\}  $ such that $C_{n}\subseteq
U_{n}$ for every $n\in\omega.$
\end{proposition}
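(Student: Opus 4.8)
The plan is to construct the disjoint clopen family $\{C_n \mid n\in\omega\}$ by a straightforward recursion on $n$, exploiting at each step the two hypotheses: zero-dimensionality (to find clopen sets inside given open sets) and uncountable $\pi$-character everywhere (to guarantee that shrinking away from finitely many earlier clopen sets is always possible). The key observation I would isolate first is this: if $C$ is a nonempty clopen set and $Z \subseteq X$ is finite, then $C \setminus \bigcup_{z\in Z} \overline{\{?\}}$... — actually the cleaner formulation is that within any nonempty open $U$, given finitely many clopen sets $C_0,\dots,C_{n-1}$ already chosen, I want a nonempty clopen $C_n \subseteq U$ disjoint from all of them. Since $C_0 \cup \dots \cup C_{n-1}$ is clopen, the set $U \setminus (C_0\cup\dots\cup C_{n-1})$ is open; the only thing that could go wrong is that this set is empty, i.e. $U \subseteq C_0\cup\dots\cup C_{n-1}$.

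Here is where uncountable $\pi$-character enters, and this is the step I expect to be the main obstacle conceptually (though not technically hard). I would argue that if $U \subseteq C_0 \cup \dots \cup C_{n-1}$, then since $U$ is nonempty open and the $C_i$ are clopen, one of the $C_i \cap U$ is nonempty open, so $U \cap C_i$ is a nonempty open subset of $C_i$. But that alone is not a contradiction — a single clopen set is fine to be contained in a finite union. The real point must instead be handled differently: I do \emph{not} need $C_n$ disjoint from $C_0,\dots,C_{n-1}$ to come from the \emph{same} $U_n$ region in a way that fights finitely many obstructions; rather, I should shrink each $U_n$ around a well-chosen point. So let me restructure: pick a point $x_n \in U_n$ for each $n$. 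The family $\{U_m \mid m \neq n, x_n \in U_m\}$ together with neighborhoods of $x_n$ — hmm, this also needs care.

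**The cleanest route**, which I would actually write up, is the recursion where at stage $n$ I have nonempty pairwise disjoint clopen $C_0,\dots,C_{n-1}$ with $C_i\subseteq U_i$, and I seek $C_n\subseteq U_n$. Consider $V = U_n \setminus (C_0\cup\dots\cup C_{n-1})$, which is open. If $V\neq\emptyset$, zero-dimensionality gives a nonempty clopen $C_n\subseteq V$ and we are done. If $V = \emptyset$, then $U_n\subseteq C_0\cup\dots\cup C_{n-1}$, so some $C_i$ has nonempty interior-intersection with $U_n$; but now I use that I should have been choosing the $C_i$ small enough in the first place — specifically, I would strengthen the recursion hypothesis: maintain that each $C_i$ is a member of a fixed local $\pi$-base-style shrinking so that no $C_i$ ever "swallows" a later $U_n$. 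Concretely, since $X$ has uncountable $\pi$-character everywhere, for the point $x_i$ around which $C_i$ is built, no \emph{countable} family of open sets is a local $\pi$-base at $x_i$, so in particular at each stage I can choose $C_i$ avoiding any prescribed point $x_n$ ($n<i$) and, by a bookkeeping enumeration of all pairs, ensure $C_i \not\ni x_n$ for the relevant $n$. Then if $U_n\subseteq C_0\cup\dots\cup C_{n-1}$ fails to admit a disjoint clopen refinement, I pick $x_n\in U_n$, note $x_n$ lies in some $C_i$, and derive a contradiction with how $C_i$ was chosen once $x_n$ was known — so I reorder: choose all the basepoints $x_n\in U_n$ first (this is fine, just countably many choices), then do the clopen recursion with the basepoints fixed in advance, at stage $i$ choosing clopen $C_i\ni x_i$ with $C_i\subseteq U_i$, $C_i$ disjoint from $C_0,\dots,C_{i-1}$, and crucially $x_n\notin C_i$ for all $n\neq i$ with $n\le N_i$ for a suitable enumeration — uncountable $\pi$-character at $x_i$ is exactly what lets us shrink $C_i$ to miss finitely (or countably) many other basepoints while still being a neighborhood-type clopen set inside $U_i$. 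This closes the recursion, and the resulting $\{C_n\}$ is the desired disjoint clopen refinement.

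**The main obstacle**, as the above scratch work shows, is correctly phrasing the use of "uncountable $\pi$-character everywhere": a single clopen set contained in a finite union of others is not absurd, so one cannot run the naive set-subtraction argument blindly. The fix is to fix the countably many basepoints $x_n\in U_n$ \emph{before} the recursion and use that, at any point $x$, no countable collection of neighborhoods can be a local $\pi$-base, hence one can always find a clopen neighborhood of $x_i$ inside $U_i$ avoiding any prescribed finite set of other points; a diagonal enumeration of the countably many "avoidance" requirements then makes the whole thing consistent. Once that structural point is set up, the rest is a routine recursion using zero-dimensionality to produce clopen sets.
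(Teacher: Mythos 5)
Your final scheme has a genuine gap, and it sits exactly at the point where you invoke the hypothesis. You fix basepoints $x_n\in U_n$ in advance and then, at stage $i$, want a clopen $C_i\subseteq U_i$ with $x_i\in C_i$ that misses the other basepoints, asserting that ``uncountable $\pi$-character at $x_i$ is exactly what lets us shrink $C_i$ to miss finitely (or countably) many other basepoints.'' That is not what uncountable $\pi$-character says: it asserts that no countable family of non-empty open sets is a local $\pi$-base at $x_i$, i.e.\ some neighborhood of $x_i$ fails to \emph{contain} any member of the family; it gives no power whatsoever to avoid prescribed \emph{points} (avoiding finitely many points is a separation-axiom matter, and avoiding countably many is in general impossible --- the later basepoints may cluster at $x_i$, e.g.\ when all $U_n$ are equal and the space is countable and crowded, which is precisely the intended application). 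Moreover, the bookkeeping ``$x_n\notin C_i$ for $n\le N_i$'' does not suffice: for the recursion to reach stage $n$ with $U_n\setminus(C_0\cup\dots\cup C_{n-1})\ni x_n$, every earlier $C_i$ must avoid \emph{all} later basepoints $x_{i+1},x_{i+2},\dots$, an infinite set of requirements per stage, and since the $x_n$ are fixed before any $C_i$ is built, no diagonal enumeration can rescue this. So the construction can get stuck: $U_n$ may be swallowed by $C_0\cup\dots\cup C_{n-1}$, which was the very obstruction your first attempt correctly identified and the restructuring was supposed to remove.

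The repair is to choose the points along the recursion and to use $\pi$-character against the \emph{open sets}, not against points; this is what the paper does. Having built $C_0,\dots,C_{k-1}$ so that every leftover $U_n^{k-1}=U_n\setminus(C_0\cup\dots\cup C_{k-1})$ is non-empty (and open, since the $C_i$ are clopen), pick $a_k\in U_k^{k-1}$; since $a_k$ has uncountable $\pi$-character, the countable family $\{U_n^{k-1}\mid n\in\omega\}$ is not a local $\pi$-base at $a_k$, so there is a clopen $V_k\ni a_k$ containing none of the $U_n^{k-1}$, and one sets $C_k=U_k^{k-1}\cap V_k$. This $C_k$ is non-empty, clopen, inside $U_k$, disjoint from the earlier $C_i$, and --- the crucial point your argument lacks --- every $U_n^{k}=U_n^{k-1}\setminus C_k$ remains non-empty because $U_n^{k-1}\not\subseteq V_k$. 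Hence the recursion never halts and produces the desired family.
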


\begin{proof}
Since $X$ is zero dimensional, we may actually assume that each $U_{n}$ is
clopen. Pick any $a_{0}\in U_{0}.$ Since $\left\{  U_{n}\mid n\in
\omega\right\}  $ is not a local $\pi$-base at $a_{0},$ we can find $V_{0}\in$
\textsf{Clop}$\left(  X\right)  $ with $a_{0}\in V_{0}$ such that $V_{0}$ does
not contain any of the $U_{n}.$ Define $C_{0}=U_{0}\cap V_{0}.$ Denote
$U_{n}^{0}=U_{n}\setminus C_{0}.$ Pick any $a_{1}\in U_{1}^{0}.$ Since
$\left\{  U_{n}^{0}\mid n\in\omega\right\}  $ is not a local $\pi$-base at
$a_{1},$ we can find $V_{1}\in$ \textsf{Clop}$\left(  X\right)  $ with
$a_{1}\in V_{1}$ such that $V_{1}$ does not contain any of the $U_{n}^{0}.$
Define $C_{1}=U_{1}^{0}\cap V_{1}.$ We continue in this way and find each
$C_{n}.$
\end{proof}

\qquad\ \ \ \ \ \ \ \ \ 

With this result, we can prove:

\begin{proposition}
Let $X$ be a zero dimensional space that has uncountable $\pi$-character
everywhere. The ideal \textsf{nwd}$\left(  X\right)  $ is tight. In
particular, \textsf{nwd}$\left(  X\right)  $ $\nleqslant_{\text{\textsf{K}}}$
\textsf{nwd. }\label{nwdX no nwd}
\end{proposition}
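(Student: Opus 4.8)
The plan is to establish tightness of $\mathsf{nwd}(X)$ directly from the definition (using the simplified characterization that a partition-selector into $\mathsf{nwd}(X)$ suffices, or just the basic definition of tight), and then read off the Kat\v{e}tov non-reduction from Lemma \ref{Lema tight no abajo}(1). The key point is that in a zero-dimensional space, $\mathsf{nwd}$-positive sets are exactly those sets whose closure has nonempty interior, hence each $\mathsf{nwd}$-positive set $X_n$ contains a nonempty clopen set $U_n$; and a set that is somewhere dense in $X$ is automatically $\mathsf{nwd}$-positive.

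First I would fix a countable family $\{X_n \mid n \in \omega\} \subseteq \mathsf{nwd}(X)^+$. Using item (7) of the list of observations on tightness, it is enough to produce $Y \in \mathsf{nwd}(X)$ meeting each $X_n$ (in a single point suffices). For each $n$, since $X_n \notin \mathsf{nwd}(X)$, the closure $\overline{X_n}$ has nonempty interior, so by zero-dimensionality there is a nonempty clopen $U_n$ with $U_n \subseteq \overline{X_n}$; replacing $U_n$ by $U_n \cap X_n$'s "trace" — more precisely, note that $X_n \cap U_n$ is dense in $U_n$, so $X_n$ meets every nonempty open subset of $U_n$. Now apply Proposition \ref{top disjoint refinement} (the Topological Disjoint Refinement Lemma) to the family $\{U_n \mid n \in \omega\}$ to obtain pairwise disjoint nonempty clopen sets $C_n \subseteq U_n$. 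For each $n$ pick a point $y_n \in C_n \cap X_n$ (nonempty since $X_n$ is dense in $U_n \supseteq C_n$ and $C_n$ is open).

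Next I would argue that $Y = \{y_n \mid n \in \omega\}$ is nowhere dense. The crucial feature is that the $C_n$ are pairwise disjoint clopen sets and $V_n$ (the witnessing clopen from the proof of the Refinement Lemma, with $C_n = U_n^{(n-1)} \cap V_n$, $a_n \in V_n$) was chosen so that $V_n$ contains none of the previously listed open sets. This is exactly the mechanism that prevents $Y$ from being dense anywhere: given any nonempty clopen $W$, if $W$ is contained in no $C_n$ then $W$ misses all but finitely many $C_n$, and one can shrink $W$ to avoid the finitely many points $y_n$ lying in it; while the case where $W$ sits inside some single $C_n$ is where I would need to use the "$V_n$ contains no $U_m$" property together with uncountable $\pi$-character at $a_n$. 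Concretely, $Y \cap C_n = \{y_n\}$ is a single point in a space with no isolated points (uncountable $\pi$-character everywhere forces this), so $C_n \setminus \{y_n\}$ is open nonempty and disjoint from $Y$; thus $Y$ is nowhere dense inside each $C_n$ and outside $\bigcup_n C_n$ it is empty. Hence $Y \in \mathsf{nwd}(X)$ and $Y \cap X_n \ne \emptyset$ for all $n$, so $\mathsf{nwd}(X)$ is tight.

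\textbf{The main obstacle} I anticipate is the bookkeeping in the last paragraph: making sure that every nonempty open set really does contain a nonempty open subset missing $Y$, across all the cases (open sets straddling several $C_n$'s, open sets inside one $C_n$, open sets meeting the "leftover" $X \setminus \bigcup C_n$). The cleanest route is probably to observe that $Y$ has at most one point in each $C_n$ and that $\{C_n\}$ is a discrete family of clopen sets with $Y \subseteq \bigcup_n C_n$ — then $\overline{Y} \subseteq \bigcup_n C_n \cup (\text{limit points})$, and since each $C_n$ is clopen, $\overline{Y} \cap C_n = \overline{\{y_n\}} = \{y_n\}$ as $X$ is $T_1$; so $\overline{Y} = Y$ is a relatively discrete closed set, which in a crowded space is nowhere dense. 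Finally, the "in particular" clause follows immediately: by Lemma \ref{Lema tight no abajo}(1), a tight ideal is not Kat\v{e}tov below $\mathsf{nwd}$, giving $\mathsf{nwd}(X) \nleq_{\text{\textsf{K}}} \mathsf{nwd}$.
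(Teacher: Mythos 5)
Your proposal is correct and follows essentially the same route as the paper's proof: find open $U_n$ in which $X_n$ is dense, apply the Topological Disjoint Refinement Lemma to get disjoint clopen $C_n\subseteq U_n$, pick one point $y_n\in C_n\cap X_n$, observe that the resulting set is relatively discrete (one point per $C_n$) and hence nowhere dense in a crowded space, and then quote Lemma \ref{Lema tight no abajo}(1) for the ``in particular'' clause. The only blemishes are the unnecessary appeal to the internal choices ($V_n$) made inside the proof of the Refinement Lemma and the unjustified claim that $\overline{Y}=Y$ (the $y_n$ may accumulate outside $\bigcup_n C_n$); neither is needed, since relative discreteness of $Y$ already yields $Y\in\mathsf{nwd}(X)$, exactly as in the paper.
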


\begin{proof}
Let $\left\{  Y_{n}\mid n\in\omega\right\}  \subseteq$ \textsf{nwd}$\left(
X\right)  ^{+},$ we need to find a nowhere dense set that intersects each
$Y_{n}.$ Since each $Y_{n}$ is not nowhere dense, we may find a non-empty open
set $U_{n}$ such that $Y_{n}$ is dense in $U_{n}.$ We now apply the
Topological Disjoint Refinement Lemma and find a pairwise disjoint family
$\left\{  C_{n}\mid n\in\omega\right\}  \subseteq$ \textsf{Clop}$\left(
X\right)  $ such that $\emptyset\neq C_{n}\subseteq U_{n}$ for every
$n\in\omega.$ Since $Y_{n}$ is dense in $U_{n},$ we may choose $y_{n}\in
Y_{n}\cap C_{n}.$ Let $D=\left\{  y_{n}\mid n\in\omega\right\}  $ and note
that $D\cap C_{n}=\left\{  y_{n}\right\}  $ for each $n\in\omega.$ In this
way, $D$ is a discrete set (hence nowhere dense) and it obviously has
non-empty intersection with each $Y_{n}.$
\end{proof}

\qquad\ \ \ 

We now want to find condition that guarantees that \textsf{nwd}$\left(
X\right)  $ is not Kat\v{e}tov above $\mathcal{ED}.$

\begin{lemma}
Let $X$ be a Fr\'{e}chet space with no isolated points, $\left\{  N_{n}\mid
n\in\omega\right\}  $ \ is a family of nowhere dense sets$,$ $a\in X$ and
$D\subseteq X$ a dense set. There is $S\in\left[  D\right]  ^{\omega}$ such
that: \label{suc evita nwd}

\begin{enumerate}
\item $S$ converges to $a.$

\item $\left\vert S\cap N_{n}\right\vert \leq1$ for every $n\in\omega.$
\end{enumerate}
\end{lemma}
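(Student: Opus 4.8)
The plan is to build $S$ as a strictly increasing sequence $\{s_k \mid k \in \omega\}$ of points of $D$, one point at a time, using the Fréchet property twice at each stage: once to keep the sequence converging to $a$, and once to avoid the nowhere dense sets $N_n$. The key observation is that since $a \in X$ and $X$ has no isolated points, $a$ is in the closure of $X \setminus \{a\}$, hence in the closure of the dense set $D$ (or of $D \setminus \{a\}$); more importantly, for \emph{every} open neighborhood $U$ of $a$, the set $U \cap D$ is infinite, and $a \in \overline{U \cap D}$ if we are careful. Actually the cleanest route: fix a point $a$ and note that because $D$ is dense and $a$ is not isolated, every open $U \ni a$ meets $D$ in an infinite set; so $a \in \overline{D \setminus F}$ for every finite $F$. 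By Fréchetness applied to $a$ and $D \setminus \{a\}$, there is a sequence $A \subseteq D$ converging to $a$. The problem is that this one sequence $A$ need not avoid the $N_n$'s — indeed $A$ could be entirely contained in $N_0$. So I will instead work recursively, using Fréchetness at each step on the ``tail'' part of $D$.

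Here is the recursion. Suppose $s_0, \dots, s_{k-1} \in D$ have been chosen, pairwise distinct, together with an open neighborhood $U_{k-1} \ni a$ with $s_0,\dots,s_{k-1} \notin U_{k-1}$ (so that we still have room to converge). Consider the set $D' = (D \cap U_{k-1}) \setminus (\{s_0,\dots,s_{k-1}\} \cup a \cup N_0 \cup \dots \cup N_k)$. I claim $a \in \overline{D'}$: indeed $a \in \overline{D \cap U_{k-1}}$ since $D$ is dense and $U_{k-1}$ open, and I am only removing finitely many points together with finitely many nowhere dense sets $N_0,\dots,N_k$ — but their union is nowhere dense, hence its closure is nowhere dense, hence it does not contain the nonempty open set $U_{k-1}$ near $a$ — wait, this needs $a \notin \overline{N_0 \cup \dots \cup N_k}$, which is \emph{not} guaranteed. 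This is the main obstacle and the point where I must be more clever: $a$ might lie in the closure of some $N_n$, so I cannot simply delete the $N_n$'s wholesale while staying in the closure of $D'$.

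The fix is to not delete the $N_n$'s from the domain, but rather to choose $s_k$ one at a time so cleverly that it lands outside all of them. To do this: by Fréchetness, pick a sequence $A_k \subseteq (D \cap U_{k-1}) \setminus \{s_0,\dots,s_{k-1}, a\}$ converging to $a$. Since $A_k$ is infinite and converges to $a$, and since each $N_n$ for $n \le k$ is nowhere dense — here I use the stronger fact that a set converging to $a$ meets each $N_n$ in a set which is also nowhere dense/discrete in the subspace $A_k \cup \{a\}$, so $A_k \cap N_n$ is finite for each $n$ (because $A_k$ together with its limit $a$ is a convergent sequence, a compact scattered set, in which any nowhere dense subset is finite — actually: $A_k$ is a discrete-in-itself set, so $A_k \cap N_n$ being nowhere dense in the discrete-plus-limit space... the precise claim is that a nowhere dense subset of a convergent sequence $\cup$ limit is finite). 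Hence $A_k \setminus (N_0 \cup \dots \cup N_k)$ is cofinite in $A_k$, still infinite, still converges to $a$. Pick $s_k$ in this set, and pick $U_k \ni a$ open with $s_k \notin U_k$ (possible since $X$ is $T_1$ — or we take the appropriate tail of a nested base at $a$). Then $s_k \in D$, $s_k \notin N_n$ for $n \le k$, and $s_k \in U_{k-1}$; the nesting of the $U_k$'s forces $\{s_k\}$ to converge to $a$, and the choice $s_k \notin N_n$ for all $n \le k$ gives $|S \cap N_n| \le n$ — to get $\le 1$ I should strengthen the bookkeeping to require $s_k \notin N_n$ for all $n < k$ \emph{and} $s_k \ne$ any earlier point already in $N_n$; but the cleanest is: enumerate so that at stage $k$ we avoid $N_0, \dots, N_k$ entirely from $s_k$ onward, and separately note the initial segment $s_0$ meets each $N_n$ in at most one point by definition — actually we just need $s_k \notin \bigcup_{n \le k} N_n$ for all $k \ge 1$ and $s_0$ arbitrary, then for fixed $n$, at most $s_0, \dots, s_n$ can lie in $N_n$, giving $|S \cap N_n| \le n+1$, not $\le 1$. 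To genuinely get $\le 1$: at stage $k$, additionally require $s_k \notin N_n$ whenever some earlier $s_j \in N_n$ — but we don't know which $N_n$ earlier points hit; instead re-run with the requirement that $s_k$ avoids $N_0 \cup \dots \cup N_k$ and also, crucially, choose $s_k$ to avoid every $N_n$ that meets $\{s_0,\dots,s_{k-1}\}$, of which there are finitely many, all nowhere dense, so their union is nowhere dense and (by the convergent-sequence argument) meets $A_k$ finitely; delete those too. Then no two $s_j, s_k$ lie in a common $N_n$, so $|S \cap N_n| \le 1$. That closes the argument.

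\medskip

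\textbf{Summary of steps.} (1) Fix a decreasing neighborhood base $\{U_k\}$ at $a$ if one exists, or just maintain at stage $k$ an open $U_k \ni a$ missing $s_0,\dots,s_k$. (2) Prove the lemma: if $A \cup \{a\}$ is a convergent sequence and $N$ is nowhere dense in $X$, then $A \cap N$ is finite (a nowhere dense subset of a convergent sequence with its limit is finite, since that space is compact and the sequence points are isolated). (3) Recursion: at stage $k$, use Fréchetness on $a$ and $D \cap U_{k-1} \setminus \{s_0,\dots,s_{k-1},a\}$ (dense near $a$, so $a$ is in its closure) to get a sequence $A_k \to a$; delete from $A_k$ all points lying in $\bigcup \{N_n : n \le k \text{ or } N_n \cap \{s_0,\dots,s_{k-1}\} \ne \emptyset\}$, a finite union of nowhere dense sets, hence a cofinite deletion by step (2); pick $s_k$ in the remainder and pick $U_k \ni a$ open missing $s_k$. (4) Set $S = \{s_k : k \in \omega\}$; the nesting $s_{k+1}, s_{k+2}, \dots \in U_k$ shows $S \to a$, and the deletion clause shows no $N_n$ contains two points of $S$. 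The main obstacle, as noted, is precisely that $a$ may lie in $\overline{N_n}$, so one cannot excise the $N_n$ from the ambient space; the resolution is to excise them only from each \emph{convergent sequence} $A_k$, where step (2) guarantees the excision is finite.
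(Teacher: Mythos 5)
Your key step (2) is false, and it is load-bearing. Nowhere density of $N$ in $X$ does not make $N\cap A_k$ finite for a set $A_k$ converging to $a$: a convergent sequence (without its limit) is itself typically nowhere dense in $X$ (e.g.\ $\{1/n\mid n\geq 1\}$ in $\mathbb{Q}$), so nothing prevents the set handed to you by Fr\'echetness at stage $k$ from lying \emph{entirely} inside $N_0$, in which case your excision deletes all of $A_k$ and the recursion halts. The confusion is between ``nowhere dense in $X$'' and ``nowhere dense in the subspace $A_k\cup\{a\}$''; only the latter forces finiteness, and it is not what the hypothesis gives you. This is exactly the obstacle you yourself flagged ($a$ may lie in $\overline{N_n}$, and a Fr\'echet-produced sequence may sit inside an $N_n$), and the appeal to compactness/isolated points does not remove it. Two further gaps: (i) you excise ``every $N_n$ that meets $\{s_0,\dots,s_{k-1}\}$, of which there are finitely many'' --- but the $N_n$ are not assumed disjoint, so a single chosen point can lie in infinitely many $N_n$, and that union need not be nowhere dense (so even granting step (2) this excision is unjustified); this bookkeeping is only harmless when each point meets at most finitely many $N_n$, as in the intended application where the $N_n$ partition $Y$. (ii) The convergence argument is incomplete: $S\to a$ from ``nesting'' needs the $U_k$ to run through a neighbourhood base at $a$, but Fr\'echet spaces need not be first countable (the space $\mathbb{D}$ to which the lemma is applied is countable, Fr\'echet, with uncountable $\pi$-character), and choosing one point from each of countably many sequences converging to $a$ need not yield a set converging to $a$ (consider the sequential fan $S_\omega$).

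The paper's proof supplies precisely the ingredient your step (2) was meant to provide, but by a different route: assume each $N_n$ closed, put $M_n=N_0\cup\dots\cup N_n$, and note $D\setminus M_n$ is still dense since $X\setminus M_n$ is open dense. First take $B=\{b_n\mid n\in\omega\}\subseteq D$ converging to $a$ with $b_n\neq a$; then, for each $n$, use Fr\'echetness again to get $C_n\subseteq D\setminus M_n$ converging to $b_n$; finally apply Fr\'echetness a third time to $C=\bigcup_n C_n$ (which has $a$ in its closure) to obtain a single set $E\subseteq D$ converging to $a$. Since $C_n\to b_n\neq a$, each $E\cap C_n$ is finite, and since points of $C_i$ with $i\geq n$ avoid $N_n$, one gets $E\cap N_n\subseteq\bigcup_{i<n}(E\cap C_i)$ finite for every $n$ --- with no point-by-point recursion, no neighbourhood base at $a$, and no need for $a\notin\overline{N_n}$. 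The desired $S$ is then extracted from $E$ (immediately so when the $N_n$ are pairwise disjoint, which is the situation in the application). If you want to salvage your write-up, replace step (2) by this ``two-level'' convergence trick; as written, the proposal does not prove the lemma.
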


\begin{proof}
Recall that the closure of a nowhere dense set is again nowhere dense, so we
may assume each $N_{n}$ is closed. Denote $M_{n}=%
{\textstyle\bigcup\limits_{i\leq n}}
N_{i}\in$ \textsf{nwd}$\left(  X\right)  $ and $M_{n}^{\text{\textsf{c}}%
}=X\setminus M_{n},$ which is an open dense set. $X$ is Fr\'{e}chet, $a$ is
not an isolated point and $D$ is dense, so we can find $B=\left\{  b_{n}\mid
n\in\omega\right\}  \in\left[  D\right]  ^{\omega}$ that converges to $a$. We
may assume $b_{n}\neq a$ for every $n\in\omega.$ Now, since $D\cap
M_{n}^{\text{\textsf{c}}}$ is dense (for each $n\in\omega$), we may find a
countable $C_{n}\subseteq$ $D\cap M_{n}^{\text{\textsf{c}}}$ converging to
$b_{n}.$ Define $C=%
{\textstyle\bigcup\limits_{n\in\omega}}
C_{n}.$ It is clear that $a\in\overline{C}.$ Once again since $X$ is
Fr\'{e}chet, we can find $E\in\left[  C\right]  ^{\omega}$ such that
$E\longrightarrow a.$

\qquad\qquad\ \ 

\begin{claim}
\ \ \ \ $E\cap N_{n}$ is finite for every $n\in\omega.$
\end{claim}

\qquad\ \ \ 

Since $C_{n}$ converges to $b_{n}$ and $b_{n}\neq a,$ it follows that $E\cap
C_{n}$ is finite. Since $E\cap N_{n}\subseteq%
{\textstyle\bigcup\limits_{i<n}}
E\cap C_{i},$ the claim follows.

\qquad\qquad\qquad\ \ 

We can now easily find $S\in\left[  E\right]  ^{\omega}$ that intersects each
$N_{n}$ in at most one point.
\end{proof}

\qquad\qquad\qquad\ \ \ 

We now prove the following:

\begin{proposition}
Let $X$ be a countable Fr\'{e}chet space with no isolated points. The ideal
\textsf{nwd}$\left(  X\right)  $ is weakly selective (it has no restrictions
Kat\v{e}tov above $\mathcal{ED}$).\label{nwdX no ED}
\end{proposition}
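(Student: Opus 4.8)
The plan is to derive the statement from Table~2. By the table, saying that $\textsf{nwd}(X)$ is weakly selective, i.e.\ that $\mathcal{ED}\nleq_{\textsf{K}}\textsf{nwd}(X)\upharpoonright Y$ for every $Y\in\textsf{nwd}(X)^{+}$, amounts to saying that for every such $Y$, every partition of $Y$ into nowhere dense sets has a selector in $\textsf{nwd}(X)^{+}$. So I fix $Y\in\textsf{nwd}(X)^{+}$ and a partition $\mathcal{P}=\{P_{n}\mid n\in\omega\}$ of $Y$ into nowhere dense sets, and aim to produce a partial selector of $\mathcal{P}$ which is somewhere dense (which, if one insists on a full selector, extends to one by adding a point of each missed $P_{n}$).

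The first step is a reduction to the dense case. Since $Y$ is not nowhere dense, $U=\mathrm{int}(\overline{Y})$ is a nonempty open set; then $Y\cap U$ is dense in $U$, each $P_{n}\cap U$ is nowhere dense in $U$, and (as $U$ is open in $X$) a subset of $U$ is nowhere dense in $U$ iff it is nowhere dense in $X$. The open subspace $U$ is again countable, Fr\'echet and has no isolated points, so, passing to $U$ and renaming, I may assume that $Y$ is dense in $X$ and each $P_{n}$ is a nowhere dense subset of $X$.

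Now the construction. Enumerate $X=\{x_{m}\mid m\in\omega\}$. For each $m$, apply Lemma~\ref{suc evita nwd} with $a=x_{m}$, the dense set $D=Y$, and the nowhere dense family $\{P_{n}\mid n\in\omega\}$, obtaining $S_{m}\in[Y]^{\omega}$ that converges to $x_{m}$ and satisfies $|S_{m}\cap P_{n}|\le1$ for every $n$. Since $S_{m}$ is infinite, $J_{m}=\{n\mid S_{m}\cap P_{n}\neq\emptyset\}$ is infinite. Invoking the standard fact that any countable family of infinite subsets of $\omega$ admits a disjoint refinement, I pick pairwise disjoint infinite sets $K_{m}\subseteq J_{m}$ and put $T_{m}=\{s\in S_{m}\mid s\in\bigcup_{n\in K_{m}}P_{n}\}$. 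Then $T_{m}$ is an infinite subset of $S_{m}$, hence still converges to $x_{m}$; $|T_{m}\cap P_{n}|\le1$ for every $n$; and $T_{m}\cap P_{n}=\emptyset$ whenever $n\notin K_{m}$.

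Finally set $S=\bigcup_{m\in\omega}T_{m}$. Since the $K_{m}$ are pairwise disjoint, for each $n$ there is at most one $m$ with $P_{n}\cap T_{m}\neq\emptyset$, so $|S\cap P_{n}|\le1$ and $S$ is a partial selector of $\mathcal{P}$. On the other hand each $x_{m}$ lies in $\overline{T_{m}}\subseteq\overline{S}$, so $\overline{S}=X$; thus $S$ is dense, and in particular $S\in\textsf{nwd}(X)^{+}$, which finishes the proof. I expect the real work to be exactly this last coordination: one must glue together the sequences converging to the (countably many) points of a dense set into a single selector, and this succeeds only because Lemma~\ref{suc evita nwd} forces each of those sequences to spread over infinitely many of the blocks $P_{n}$, leaving enough slack for the disjoint refinement. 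Note also that this argument never needs convergent sequences to be nowhere dense, so it does not require any separation axiom on $X$ beyond what is assumed.
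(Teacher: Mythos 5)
Your proposal is correct and takes essentially the same route as the paper: both pass to a non-empty open $U\subseteq\overline{Y}$, apply Lemma \ref{suc evita nwd} at each point of $U$ with $D=Y$ to get convergent sequences meeting each piece at most once, and then merge them into a single somewhere dense partial selector. Your disjoint-refinement of the index sets $J_{m}$ simply makes explicit the extraction step that the paper states as ``we can now extract $B$'' without detail.
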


\begin{proof}
Let $Y\in$ \textsf{nwd}$\left(  X\right)  ^{+}$ and $\left\{  N_{n}\mid
n\in\omega\right\}  $ be a partition of $Y$ in nowhere dense sets. Since $Y$
is not nowhere dense, we can find a non-empty open set $U\subseteq\overline
{Y}.$ Note that $U$ is on its own a Fr\'{e}chet space with no isolated points
and each $N_{n}$ is nowhere dense in $U.$

\qquad\qquad\qquad\ \ \ \ 

Take an enumeration $U=\left\{  a_{n}\mid n\in\omega\right\}  .$ We now apply
Lemma \ref{suc evita nwd} infinitely many times and for each $n\in\omega,$ we
find $A_{n}$ $\subseteq Y$ with the following properties:

\begin{enumerate}
\item $A_{n}$ converges to $a_{n}.$

\item $\left\vert A_{n}\cap N_{m}\right\vert \leq1$ for every $m\in\omega.$
\end{enumerate}

\qquad\ \ \ \ \qquad\ \ 

We can now extract $B\in\left[  Y\right]  ^{\omega}$ such that for every
$n\in\omega,$ we have the following:

\begin{enumerate}
\item $B\cap A_{n}$ is infinite.

\item $B\cap N_{n}$ has at most one point.
\end{enumerate}

\qquad\qquad\ \ 

Finally, note that $U\subseteq\overline{B},$ so $B$ is not nowhere dense.
\end{proof}

\qquad\ \ \ \ 

By combining Proposition \ref{nwdX no nwd} and Proposition \ref{nwdX no ED},
we conclude the following:

\begin{theorem}
If $X$ is a topological space with the following properties:

\begin{enumerate}
\item $X$ is countable.

\item $X$ is zero dimensional.

\item $X$ is Fr\'{e}chet.

\item $X$ has uncountable $\pi$-character everywhere.
\end{enumerate}

Then, \textsf{nwd}$\left(  X\right)  $ $\nleqslant_{\text{\textsf{K}}}$
\textsf{nwd }and $\mathcal{ED}$ $\nleqslant_{\text{\textsf{K}}}$
\textsf{nwd}$\left(  X\right)  \upharpoonright Y$ for every $Y\in$
\textsf{nwd}$\left(  X\right)  ^{+}.$ In other words, \textsf{nwd}$\left(
X\right)  $ does not satisfy the Category Dichotomy.
\end{theorem}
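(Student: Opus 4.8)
The statement is an immediate consequence of two propositions that have already been established in the excerpt, so the plan is essentially to assemble them. First I would observe that the four hypotheses on $X$ are exactly the hypotheses needed for Proposition \ref{nwdX no nwd} and Proposition \ref{nwdX no ED}. Specifically: a topological space that is zero dimensional with uncountable $\pi$-character everywhere falls under Proposition \ref{nwdX no nwd}, which gives that $\mathsf{nwd}(X)$ is tight and hence $\mathsf{nwd}(X)\nleqslant_{\textsf{K}}\mathsf{nwd}$. This handles the first alternative of the Category Dichotomy: $\mathsf{nwd}(X)$ is not Kat\v etov below $\mathsf{nwd}$.

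For the second alternative, I would invoke Proposition \ref{nwdX no ED}: a countable Fr\'echet space with no isolated points has the property that $\mathsf{nwd}(X)$ is weakly selective, i.e. $\mathcal{ED}\nleqslant_{\textsf{K}}\mathsf{nwd}(X)\upharpoonright Y$ for every $Y\in\mathsf{nwd}(X)^{+}$. The only small gap to fill is that Proposition \ref{nwdX no ED} requires $X$ to have no isolated points, which is not literally one of the four listed hypotheses; however, uncountable $\pi$-character everywhere forces this, since an isolated point $\{x\}$ would be an open set that is its own singleton local $\pi$-base, giving $x$ countable (indeed finite) $\pi$-character. So hypotheses (1)–(4) do imply the hypotheses of both propositions, and I would spell out this one-line implication explicitly.

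Having noted these two facts, the conclusion is immediate: $\mathsf{nwd}(X)$ is neither Kat\v etov below $\mathsf{nwd}$ nor does it have a restriction Kat\v etov above $\mathcal{ED}$, so it fails the Category Dichotomy in both of its alternatives simultaneously. There is no real obstacle here — the work was all done in the preceding two propositions and in the Topological Disjoint Refinement Lemma and Lemma \ref{suc evita nwd} that feed them. If anything, the only thing worth being careful about is making sure the reader sees that ``uncountable $\pi$-character everywhere'' subsumes ``no isolated points,'' so that Proposition \ref{nwdX no ED} genuinely applies; once that is said, the proof is a two-sentence citation of the two propositions.

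\begin{proof}
First note that if $X$ has uncountable $\pi$-character everywhere, then $X$ has no isolated points: if $\{x\}$ were open, then $\{\{x\}\}$ would be a local $\pi$-base at $x$, so $x$ would have $\pi$-character $1$. Thus, under hypotheses (1)--(4), the space $X$ satisfies the hypotheses of both Proposition \ref{nwdX no nwd} and Proposition \ref{nwdX no ED}.

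By Proposition \ref{nwdX no nwd}, since $X$ is zero dimensional with uncountable $\pi$-character everywhere, the ideal $\mathsf{nwd}(X)$ is tight, and in particular $\mathsf{nwd}(X)\nleqslant_{\textsf{K}}\mathsf{nwd}$. By Proposition \ref{nwdX no ED}, since $X$ is a countable Fr\'echet space with no isolated points, the ideal $\mathsf{nwd}(X)$ is weakly selective, so $\mathcal{ED}\nleqslant_{\textsf{K}}\mathsf{nwd}(X)\upharpoonright Y$ for every $Y\in\mathsf{nwd}(X)^{+}$. Hence neither alternative of the Category Dichotomy holds for $\mathsf{nwd}(X)$.
\end{proof}
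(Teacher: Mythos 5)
Your proof is correct and is essentially the paper's argument: the theorem is obtained by simply combining Proposition \ref{nwdX no nwd} and Proposition \ref{nwdX no ED}. Your explicit remark that uncountable $\pi$-character everywhere rules out isolated points (so Proposition \ref{nwdX no ED} indeed applies) is a small detail the paper leaves implicit, and it is handled correctly.
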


\qquad\qquad\qquad\ \ 

Our quest for finding a counterexample for the Category Dichotomy will be
complete if we can find a space with the aforementioned properties.
Fortunately, a space with these qualities has already appeared in the
literature. In \cite{PiWeightandFrechetProperty}, the first author proved the
following theorem:

\begin{theorem}
[D.]There is a topological space $\mathbb{D}=\left(  \omega^{<\omega}%
,\tau_{\mathbb{D}}\right)  $ that is Fr\'{e}chet, zero dimensional and with
$\pi$-weight at least $\mathfrak{b}.$ \label{Dow space}
\end{theorem}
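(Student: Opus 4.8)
The plan is to build $\mathbb D$ directly on the set $\omega^{<\omega}$, with a topology that codes a $\mathfrak b$-scale; zero-dimensionality and the $\pi$-weight bound will come cheaply, while the Fréchet property is the real work.

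Fix, using that $\mathfrak b$ is regular, a strictly $\leq^{*}$-increasing $\leq^{*}$-unbounded sequence $\langle g_{\alpha}\mid\alpha<\mathfrak b\rangle$ of strictly increasing functions in $\omega^{\omega}$ (equivalently, a $\leq^{*}$-unbounded sequence of minimal length, so that no proper initial segment is $\leq^{*}$-unbounded). The neighborhood base at a node $s$ should consist of sets of the form ``$s$, together with all extensions $t$ of $s$ that grow faster than $g_{\alpha}$ on every coordinate in $[\,|s|,|t|)$, with finitely many sub-cones deleted'', for $\alpha<\mathfrak b$. Every node is then a non-isolated limit point and each such set is clopen, so $\mathbb D$ is zero-dimensional once one checks that these sets form an honest neighborhood filter base -- and this is the first point where the precise form of the growth clause has to be arranged carefully, since ``$g_{\alpha}$-fast above $s$'' and ``$g_{\beta}$-fast above $s$'' must always have a common refinement of the same kind.

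For the $\pi$-weight, suppose $\{U_{i}\mid i\in I\}$ is a family of nonempty open sets with $|I|<\mathfrak b$. Each $U_{i}$ contains a basic neighborhood of some node $p_{i}$ governed by some $g_{\alpha_{i}}$, and, since $\mathfrak b$ is regular and $|I|<\mathfrak b$, we may pick $\gamma<\mathfrak b$ with $\gamma>\alpha_{i}$ for all $i$. I claim the basic neighborhood $V$ of the root governed by $g_{\gamma}$ (with no cones deleted) contains no $U_{i}$: as $g_{\gamma}$ is not $\leq^{*}$-below $g_{\alpha_{i}}$, there are arbitrarily large coordinates $j\ge|p_{i}|$ with $g_{\gamma}(j)>g_{\alpha_{i}}(j)$; extending $p_{i}$ by values barely beating $g_{\alpha_{i}}$ up to such a coordinate $j$ (and dodging the finitely many deleted cones of that basic neighborhood) produces a point of $U_{i}$ that is not $g_{\gamma}$-fast from the root, hence lies outside $V$. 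So no family of size $<\mathfrak b$ is a $\pi$-base, and the $\pi$-weight of $\mathbb D$ is at least $\mathfrak b$; the same argument carried out inside the cone above a fixed node shows moreover that every point of $\mathbb D$ has $\pi$-character at least $\mathfrak b$, which is the form in which $\mathbb D$ is needed in the previous theorem.

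The Fréchet property is the obstacle. The difficulty is structural: in the naive ``fast-cone'' tree topology above, a convergent $\omega$-sequence is forced to lie inside finitely many levels of $\omega^{<\omega}$ (a diagonal-function argument kills any sequence spanning unboundedly many levels), and one can then build a set $Y$ and a node $s\in\overline Y$ with $Y$ meeting every basic neighborhood of $s$ but with no single level of $Y$ accumulating at $s$ -- so that topology is not Fréchet. The fix is not to take it but to construct $\tau_{\mathbb D}$ by a closing-off recursion, deciding level by level up the tree which $\omega$-sequences of extensions of each node converge to it, choosing the sequences introduced ``at speed $g_{\alpha}$'' -- using that the scale is $\leq^{*}$-increasing -- so that the iterated sequential-closure operator collapses to a single step; equivalently, so that the family of declared convergent sequences is already closed under the diagonalizations that the Fréchet property demands. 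The heart of the argument is the verification that this succeeds: that for every $Y\subseteq\omega^{<\omega}$ and every $s\in\overline Y$ there is a single $\omega$-sequence from $Y$ converging to $s$. Regularity of $\mathfrak b$ is used here once more, since any countably many stages of the recursion are dominated by a single $g_{\alpha}$. I expect essentially all of the genuine difficulty to be concentrated in this last step; zero-dimensionality and the $\pi$-weight lower bound are comparatively routine bookkeeping.
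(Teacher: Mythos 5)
There is a genuine gap, and it is exactly where you predict it: the Fr\'echet property is never proved, and in this theorem the Fr\'echet property \emph{is} the theorem. (Note also that the paper itself offers no proof to compare against -- it quotes the result from \cite{PiWeightandFrechetProperty}; what you are being asked to supply is essentially Dow's construction, whose entire content is the step your proposal leaves open.) Your final paragraph is a statement of intent, not an argument: you never define $\tau_{\mathbb D}$, never specify the ``closing-off recursion'' (what exactly is decided at each stage, and in what order the nodes and candidate sequences are treated), and never verify that the object produced is a topology in which the declared sequences really converge, let alone that every point in the closure of an arbitrary $Y\subseteq\omega^{<\omega}$ is reached by a single convergent sequence from $Y$. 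Saying that the sequential closure ``collapses to a single step'' is a reformulation of what must be proved, not a mechanism for proving it, and the appeal to regularity of $\mathfrak b$ (``countably many stages are dominated by a single $g_\alpha$'') does not by itself address the diagonalization problem you correctly identify for the naive fast-cone topology.

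Moreover, the parts you label routine are only verified for the naive topology that you then discard. Once the topology is altered by the closing-off procedure -- i.e., once many more sequences are made to converge -- the neighborhood filters change, closures change, and neither the zero-dimensionality argument nor the $\pi$-character computation transfers automatically. Indeed the tension runs the wrong way for you: enlarging the supply of convergent sequences (to force Fr\'echetness) is precisely what tends to create small local $\pi$-bases and destroy the lower bound $\mathfrak b$, and negotiating this tension is the actual difficulty of Dow's construction. So the missing step is not a bookkeeping verification appended to an otherwise complete argument; as written, the proposal establishes the three required properties only for a space that is not Fr\'echet, and gestures at a modification for which none of the three properties has been checked.
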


\qquad\qquad\qquad\ \ \ \ 

Although it is not explicitly stated in \cite{PiWeightandFrechetProperty} that
$\mathbb{D}$ has uncountable $\pi$-character everywhere, the proof that its
$\pi$-weight is at least $\mathfrak{b}$ applies at any point. We briefly
highlight the relevance and context of Theorem \ref{Dow space}. In
\cite{MalykhinProblem}, the fourth author and Ramos Garc\'{\i}a resolved an
old problem of Malykhin by constructing a model in which every countable
Fr\'{e}chet topological group is second countable. Juh\'{a}sz subsequently
questioned the role of algebra in this theorem. Since, in the context of
topological groups, second countability and having countable $\pi$-weight are
equivalent properties, Juh\'{a}sz raised the question of whether there exists
(in \textsf{ZFC}) a countable Fr\'{e}chet space with uncountable $\pi$-weight.
This problem remained unsolved until the first author proved Theorem
\ref{Dow space}.

\begin{corollary}
The Category Dichotomy fails for \textsf{nwd}$\left(  \mathbb{D}\right)  .$
\end{corollary}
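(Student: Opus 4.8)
The plan is essentially to invoke the machinery already assembled. The statement to prove is the final corollary: the Category Dichotomy fails for $\mathsf{nwd}(\mathbb{D})$. The obvious route is to verify that the space $\mathbb{D}$ from Theorem \ref{Dow space} satisfies the four hypotheses of the preceding unnamed theorem (countable, zero dimensional, Fr\'echet, uncountable $\pi$-character everywhere), and then apply that theorem directly. Three of the four properties — countability (since the underlying set is $\omega^{<\omega}$), zero dimensionality, and the Fr\'echet property — are literally part of the conclusion of Theorem \ref{Dow space}, so nothing needs to be done there.

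The one point requiring a remark is the fourth hypothesis: Theorem \ref{Dow space} only asserts that $\mathbb{D}$ has $\pi$-weight at least $\mathfrak{b}$, which is a global statement, whereas the application needs uncountable $\pi$-character at \emph{every} point. As the paragraph following Theorem \ref{Dow space} already notes, the construction in \cite{PiWeightandFrechetProperty} in fact establishes the lower bound on $\pi$-weight locally: the same argument shows that no countable family of open sets can form a local $\pi$-base at any given point, so the $\pi$-character at each point is at least $\mathfrak{b} > \aleph_0$, hence uncountable. So I would write the proof of the corollary as: by Theorem \ref{Dow space} (together with the remark that its proof gives uncountable $\pi$-character everywhere), $\mathbb{D}$ satisfies hypotheses (1)--(4) of the theorem combining Propositions \ref{nwdX no nwd} and \ref{nwdX no ED}; therefore $\mathsf{nwd}(\mathbb{D}) \nleqslant_{\mathsf{K}} \mathsf{nwd}$ and $\mathcal{ED} \nleqslant_{\mathsf{K}} \mathsf{nwd}(\mathbb{D})\upharpoonright Y$ for every $Y \in \mathsf{nwd}(\mathbb{D})^+$, which is exactly the failure of the Category Dichotomy.

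There is no real obstacle here — the corollary is a one-line deduction from the theorem and the cited construction. If I wanted to be self-contained I could instead unwind the two propositions: uncountable $\pi$-character everywhere feeds the Topological Disjoint Refinement Lemma (Proposition \ref{top disjoint refinement}) to show $\mathsf{nwd}(\mathbb{D})$ is tight, hence not Kat\v etov below $\mathsf{nwd}$ by Lemma \ref{Lema tight no abajo}; and the Fr\'echet property with no isolated points feeds Lemma \ref{suc evita nwd} to show every restriction is weakly selective, hence no restriction is Kat\v etov above $\mathcal{ED}$ by the Table 2 characterization. But since both propositions are already proved in the excerpt, the honest proof is just the citation. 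The only thing worth spelling out in the writeup is the locality remark about $\pi$-character, which is the single non-purely-formal ingredient.

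\begin{proof}
By Theorem \ref{Dow space}, the space $\mathbb{D}$ is countable (its underlying set is $\omega^{<\omega}$), zero dimensional and Fr\'echet. Moreover, as remarked after Theorem \ref{Dow space}, the argument in \cite{PiWeightandFrechetProperty} showing that the $\pi$-weight of $\mathbb{D}$ is at least $\mathfrak{b}$ in fact shows, at each point, that no countable family of open sets is a local $\pi$-base there; hence every point of $\mathbb{D}$ has $\pi$-character at least $\mathfrak{b}>\aleph_0$, so $\mathbb{D}$ has uncountable $\pi$-character everywhere. Thus $\mathbb{D}$ satisfies hypotheses (1)--(4) of the theorem obtained by combining Proposition \ref{nwdX no nwd} and Proposition \ref{nwdX no ED}. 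Consequently $\mathsf{nwd}(\mathbb{D})\nleqslant_{\text{\textsf{K}}}\mathsf{nwd}$ and $\mathcal{ED}\nleqslant_{\text{\textsf{K}}}\mathsf{nwd}(\mathbb{D})\upharpoonright Y$ for every $Y\in\mathsf{nwd}(\mathbb{D})^{+}$, i.e.\ $\mathsf{nwd}(\mathbb{D})$ does not satisfy the Category Dichotomy.
\end{proof}
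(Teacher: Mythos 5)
Your proof is correct and follows exactly the route the paper intends: the corollary is a direct application of the preceding theorem to the space $\mathbb{D}$ of Theorem \ref{Dow space}, with the only substantive point being the remark (already made in the paper) that the $\pi$-weight argument in \cite{PiWeightandFrechetProperty} localizes to give uncountable $\pi$-character at every point. Nothing further is needed.
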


\section{The Category Dichotomy for nowhere dense ideals induced by
independent families \label{seccion familias independientes}}

Although we have already seen an example of an ideal that does not satisfy the
Category Dichotomy, it is worthwhile to explore which classes of ideals do
satisfy it. Inspired by the results from the previous section, we aim to gain
a deeper understanding of when the ideal of nowhere dense sets of a countable
space satisfies the dichotomy. In this section, we will examine the ideals of
spaces generated by independent families. We begin by recalling the relevant concepts.

\qquad\qquad\qquad\qquad\ \ 

In this section, for a given $A\subseteq\omega,$ denote $A^{0}=A$ and
$A^{1}=\omega\setminus A.$ For a set $\mathcal{B},$ define $\mathbb{C}%
_{\mathcal{B}}$ as the set of all functions whose domain is a finite set of
$\mathcal{B}$ and its range is contained in $\left\{  0,1\right\}  $ (in other
words, $\mathbb{C}_{\mathcal{B}}$ is the standard forcing notion for adding a
Cohen subset of $\mathcal{B}$ with finite conditions).

\begin{definition}
Let $\mathcal{B}\subseteq\left[  \omega\right]  ^{\omega}$ infinite. We say
that $\mathcal{B}$ is an \emph{independent family }if for every $p\in
\mathbb{C}_{\mathcal{B}},$ we have that $%
{\textstyle\bigcap\limits_{A\in dom\left(  p\right)  }}
A^{p\left(  A\right)  }$ is infinite. The set $%
{\textstyle\bigcap\limits_{A\in dom\left(  p\right)  }}
A^{p\left(  A\right)  }$ will be denoted by $B^{p}.$
\end{definition}

\qquad\ \ \ \ \ \ \ \ 

A classical theorem of Fichtenholz and Kantorovich states that there exists a
perfect independent family (see \cite{Jech}). In particular, for any infinite
cardinality up to $\mathfrak{c}$, there exists an independent family of that
size. In recent years, there has been significant research on independent
families. Readers interested in learning more may refer to the papers
\cite{Con(u>i)}, \cite{DefinableMaximalIndependentFamilies},
\cite{FreeSequences}, \cite{OnPospisilIdeals}, \cite{PartitionForcing},
\cite{IdealsofIndependence}, \cite{CohenPreservationandIndependence} or
\cite{SpectrumofIndependence} for further details. The thesis \cite{Perron} is
a very good introduction to independent families and related topics.

\begin{definition}
Let $\mathcal{B}$ be an independent family.

\begin{enumerate}
\item We say that $\mathcal{B}$ \emph{separates points }if for every distinct
$m,n\in\omega,$ there is $B\in\mathcal{B}$ such that $m\in B$ and $n\notin B.$

\item The \emph{envelope of }$\mathcal{B}$ is defined as \textsf{Env}$\left(
\mathcal{B}\right)  =\{B^{p}\mid p\in\mathbb{C}_{\mathcal{B}}\}.$\qquad\ \ \ \ \ \ \ \ \ 
\end{enumerate}
\end{definition}

Letting $\mathcal{B}$ be an independent family that separates points, we can
use \textsf{Env}$\left(  \mathcal{B}\right)  $ to generated a topology. The
space $X\left(  \mathcal{B}\right)  =\left(  \omega,\tau_{\mathcal{B}}\right)
$ is the topological space that has \textsf{Env}$\left(  \mathcal{B}\right)  $
as a base. The following lemma is well-known, the reader may consult
\cite{Perron} for a proof.

\begin{lemma}
Let $\mathcal{B}$ be an independent family that separates points.

\begin{enumerate}
\item $X\left(  \mathcal{B}\right)  $ is a Hausdorff, countable, zero
dimensional space with no isolated points.

\item The following are equivalent:

\begin{enumerate}
\item $\mathcal{B}$ is countable.

\item $X\left(  \mathcal{B}\right)  $ is second countable.

\item $X\left(  \mathcal{B}\right)  $ is homeomorphic to the rational numbers.
\end{enumerate}

\item Let $A\subseteq\omega$ such that $A\notin\mathcal{B}.$ The following are equivalent:

\begin{enumerate}
\item $\mathcal{B}\cup\left\{  A\right\}  $ is independent.

\item Both $A$ and $\omega\setminus A$ are dense in $X\left(  \mathcal{B}%
\right)  .$
\end{enumerate}
\end{enumerate}
\end{lemma}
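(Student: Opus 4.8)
The plan is to read off all four assertions of part (1), together with the three ``easy'' implications in parts (2) and (3), directly from the description of the base $\textsf{Env}(\mathcal{B})$; the only two steps that require an idea are that a countable $\pi$-base forces $\mathcal{B}$ to be countable (the core of (b)$\Rightarrow$(a) in part (2)) and the upgrade of ``dense'' to ``infinite in every basic open set'' in part (3). I expect the former to be the genuine obstacle; the rest is unwinding definitions plus two standard external facts (Urysohn metrization and Sierpi\'{n}ski's characterization of $\mathbb{Q}$).

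\textbf{Part (1).} First I would note that $\textsf{Env}(\mathcal{B})$ is indeed a base for a topology on $\omega$: it contains $\omega=B^{\emptyset}$, and for $p,q\in\mathbb{C}_{\mathcal{B}}$ the set $B^{p}\cap B^{q}$ is empty when $p$ and $q$ disagree somewhere and equals $B^{p\cup q}\in\textsf{Env}(\mathcal{B})$ otherwise, so every point of $B^{p}\cap B^{q}$ lies in a member of $\textsf{Env}(\mathcal{B})$ inside that intersection. The underlying set is $\omega$, so the space is countable. Each $B^{p}$ is clopen, because $\omega\setminus B^{p}=\bigcup_{A\in\textsf{dom}(p)}A^{1-p(A)}$ is a finite union of basic open sets; hence the space is zero dimensional. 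It is Hausdorff: for $m\neq n$ the separation hypothesis gives $B\in\mathcal{B}$ with $m\in B$ and $n\notin B$, and then $B$ and $\omega\setminus B$ are disjoint open neighbourhoods of $m$ and $n$. Finally, it has no isolated points, since $\{n\}$ being open would yield $B^{p}=\{n\}$ for some $p$, contradicting that $B^{p}$ is infinite by independence.

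\textbf{Part (2).} The implication (a)$\Rightarrow$(b) is immediate: a countable $\mathcal{B}$ makes $\mathbb{C}_{\mathcal{B}}$, hence $\textsf{Env}(\mathcal{B})$, countable. And (c)$\Rightarrow$(b) holds because $\mathbb{Q}$ is second countable. For (b)$\Rightarrow$(a), which is the main point, I would argue as follows. If $X(\mathcal{B})$ is second countable it has a countable $\pi$-base, which we may take to consist of basic open sets $B^{p_{n}}$ with $p_{n}\in\mathbb{C}_{\mathcal{B}}$; put $\mathcal{B}_{0}=\bigcup_{n\in\omega}\textsf{dom}(p_{n})$, a countable subfamily of $\mathcal{B}$. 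If there were $B\in\mathcal{B}\setminus\mathcal{B}_{0}$, then the nonempty open set $B$ would contain some $B^{p_{n}}$; but $B\notin\textsf{dom}(p_{n})$, so $p_{n}\cup\{(B,1)\}\in\mathbb{C}_{\mathcal{B}}$, and by independence $B^{p_{n}}\cap(\omega\setminus B)=B^{p_{n}\cup\{(B,1)\}}$ is infinite, contradicting $B^{p_{n}}\subseteq B$. Hence $\mathcal{B}=\mathcal{B}_{0}$ is countable. To finish, (b)$\Rightarrow$(c): $X(\mathcal{B})$ is regular, being zero dimensional and Hausdorff, and second countable, so it is metrizable by the Urysohn metrization theorem; a countable metrizable space with no isolated points is homeomorphic to $\mathbb{Q}$ by Sierpi\'{n}ski's characterization of the rationals.

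\textbf{Part (3).} For (a)$\Rightarrow$(b), assume $\mathcal{B}\cup\{A\}$ is independent and let $U$ be a nonempty open set; then $B^{p}\subseteq U$ for some $p\in\mathbb{C}_{\mathcal{B}}$, and $B^{p}\cap A=B^{p\cup\{(A,0)\}}$ and $B^{p}\cap(\omega\setminus A)=B^{p\cup\{(A,1)\}}$ are infinite by independence, in particular nonempty, so $A$ and $\omega\setminus A$ both meet $U$; thus both are dense. For (b)$\Rightarrow$(a), assume $A$ and $\omega\setminus A$ are dense. Since $X(\mathcal{B})$ has no isolated points and is $T_{1}$, a dense set is infinite, so $A$ is infinite and coinfinite and $\mathcal{B}\cup\{A\}$ is a legitimate candidate. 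Given $p'\in\mathbb{C}_{\mathcal{B}\cup\{A\}}$: if $A\notin\textsf{dom}(p')$ then $B^{p'}$ is infinite by independence of $\mathcal{B}$; if $A\in\textsf{dom}(p')$, write $B^{p'}=B^{p}\cap A^{i}$ with $p\in\mathbb{C}_{\mathcal{B}}$ and $i\in\{0,1\}$. Here $B^{p}$ is infinite and open, and for every finite $F\subseteq\omega$ the set $B^{p}\setminus F$ is open (the space is $T_{1}$) and nonempty, hence meets the dense set $A^{i}$; since $F$ is arbitrary, $B^{p}\cap A^{i}$ is infinite. Therefore $\mathcal{B}\cup\{A\}$ is independent. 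The only steps that are not bookkeeping are the normalization-plus-independence argument in (b)$\Rightarrow$(a) of part (2) and the $T_{1}$-upgrade just used; I expect the former to be where the actual idea lies.
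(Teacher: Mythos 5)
Your proof is correct, and all the steps (the base/clopen verifications, the $\pi$-base argument showing second countability forces $\mathcal{B}$ countable, Urysohn plus Sierpi\'{n}ski for the homeomorphism with $\mathbb{Q}$, and the density-versus-independence translation in part (3), including the $T_{1}$ upgrade from ``nonempty'' to ``infinite'') are the standard ones. The paper itself does not prove this lemma but defers to the cited reference, so there is no divergence to report; your write-up supplies exactly the expected argument.
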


\qquad\qquad\qquad\ \ \ \ 

In case $\mathcal{B}$ is a maximal independent family, the space $X\left(
\mathcal{B}\right)  $ does not have disjoint dense sets. Such spaces are
called \emph{irresolvable. }The reader may consult \cite{irr} to learn more
about irresolvable spaces and their cardinal invariants. To avoid constant
repetition, \textbf{from now on we assume all our independent families
separates points.}

\begin{lemma}
Let $\mathcal{B}$ be an uncountable independent family. $X\left(
\mathcal{B}\right)  $ has uncountable $\pi$-character everywhere. In fact, the
$\pi$-character of every point is $\left\vert \mathcal{B}\right\vert .$
\end{lemma}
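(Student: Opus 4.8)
The plan is to show that for any point $b \in X(\mathcal{B})$, every local $\pi$-base at $b$ must have size $\geq |\mathcal{B}|$, and conversely that $|\mathcal{B}|$ many basic open sets suffice. The neighborhoods of $b$ in $X(\mathcal{B})$ are (up to enlargement) exactly the sets $B^p$ with $b \in B^p$, i.e. $p \in \mathbb{C}_{\mathcal{B}}$ with $p(A) = 0$ iff $b \in A$ for each $A \in \mathrm{dom}(p)$. So a local $\pi$-base at $b$ is a family $\{B^{p_\xi} \mid \xi < \kappa\}$ such that every neighborhood of $b$ contains some $B^{p_\xi}$; we want $\kappa \geq |\mathcal{B}|$.

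First I would set up the counting. Suppose toward a contradiction that $\{B^{p_\xi} \mid \xi < \kappa\}$ is a local $\pi$-base at $b$ with $\kappa < |\mathcal{B}|$. Let $\mathcal{D} = \bigcup_{\xi<\kappa} \mathrm{dom}(p_\xi)$, so $|\mathcal{D}| \leq \kappa \cdot \aleph_0 < |\mathcal{B}|$ (here I use that $|\mathcal{B}|$ is uncountable, and in fact for the ``everywhere uncountable'' conclusion alone I only need $\kappa$ infinite would-be-countable leading to contradiction, but for the exact value I need the full cardinal arithmetic; since $\mathcal{B}$ is infinite and a family of infinite sets, $|\mathbb{C}_{\mathcal{B}}| = |\mathcal{B}|$ when $|\mathcal{B}|$ is infinite, and a $\pi$-base has size at most that, giving the upper bound). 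Pick $A \in \mathcal{B} \setminus \mathcal{D}$. Consider the basic neighborhood $U = A^0 = A$ if $b \in A$, or $U = A^1 = \omega \setminus A$ if $b \notin A$; in either case $U$ is a neighborhood of $b$ determined by the single coordinate $A$. By the local $\pi$-base property there is $\xi$ with $B^{p_\xi} \subseteq U$. But $A \notin \mathrm{dom}(p_\xi)$, so $p_\xi$ puts no constraint on the $A$-coordinate: by independence of $\mathcal{B}$, the condition $p_\xi \cup \{(A, 1 - U\text{-value})\}$ is still in $\mathbb{C}_{\mathcal{B}}$ and hence $B^{p_\xi} \cap (X \setminus U)$ is infinite, in particular nonempty — contradicting $B^{p_\xi} \subseteq U$. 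This shows every local $\pi$-base at $b$ has size $\geq |\mathcal{B}|$.

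For the reverse inequality, the family $\{B^p \mid p \in \mathbb{C}_{\mathcal{B}},\ b \in B^p\}$ is itself a local base (not just $\pi$-base) at $b$, and it has size at most $|\mathbb{C}_{\mathcal{B}}| = |\mathcal{B}|$ since $\mathcal{B}$ is infinite. Hence the $\pi$-character of $b$ is exactly $|\mathcal{B}|$, which is uncountable by hypothesis; as $b$ was arbitrary, $X(\mathcal{B})$ has uncountable $\pi$-character everywhere.

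The only genuinely delicate point is the contradiction step: one must verify that ``$B^{p_\xi} \subseteq U$'' with $A \notin \mathrm{dom}(p_\xi)$ is impossible. This is exactly where independence is used — the extension $q = p_\xi \cup \{(A,\varepsilon)\}$ (with $\varepsilon$ the value opposite to the one forced by $U$) lies in $\mathbb{C}_{\mathcal{B}}$, so $B^q = B^{p_\xi} \cap A^\varepsilon$ is infinite and disjoint from $U$, witnessing $B^{p_\xi} \not\subseteq U$. I expect no real obstacle beyond being careful that $U$ is genuinely a single-coordinate basic set (so that avoiding $A$ in $\mathrm{dom}(p_\xi)$ really does leave $U$ uncontrolled), and that the cardinal arithmetic $|\mathbb{C}_{\mathcal{B}}| = |\mathcal{B}|$ for infinite $\mathcal{B}$ is invoked correctly for the upper bound.
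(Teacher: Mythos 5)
Your proposal is correct and follows essentially the same route as the paper: the upper bound comes from the fact that the basic neighborhoods $B^{p}$ with $b\in B^{p}$ form a local ($\pi$-)base of size $\left\vert \mathcal{B}\right\vert$, and the lower bound is the same contradiction argument, picking $A\in\mathcal{B}$ outside all the (fewer than $\left\vert \mathcal{B}\right\vert$ many) coordinates used by a putative small local $\pi$-base and using independence to see that no $B^{p}$ avoiding the coordinate $A$ can be contained in $A^{i}$. Your write-up is, if anything, slightly more explicit than the paper's about the cardinal arithmetic $\kappa\cdot\aleph_{0}<\left\vert \mathcal{B}\right\vert$ and about why $B^{p_{\xi}}\subseteq U$ fails, but there is no substantive difference in method.
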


\begin{proof}
Let $\kappa=\left\vert \mathcal{B}\right\vert .$ Since \textsf{Env}$\left(
\mathcal{B}\right)  $ has the same size as $\mathcal{B},$ it follows that the
$\pi$-character of any point is at least $\kappa.$ We now proceed by
contradiction, assume there is $a\in\omega$ and $\mathcal{U}$ a local $\pi
$-base of $a$ with $\mu=\left\vert \mathcal{U}\right\vert <$ $\kappa.$ Note
that we may assume that $\mathcal{U\subseteq}$ \textsf{Env}$\left(
\mathcal{B}\right)  ,$ so there is $P\subseteq\mathbb{C}_{\mathcal{B}}$ of
size $\mu$ such that $\mathcal{U}=\left\{  B^{p}\mid p\in P\right\}  .$ Since
$\mu<\kappa,$ there is $A\in\mathcal{B}$ such that $A\notin$ \textsf{dom}%
$\left(  p\right)  $ for every $p\in P.$ Find $i<2$ such that $a\in A^{i}.$ It
follows that $A^{i}$ is an open neighborhood of $a,$ so there should be $p\in
P$ such that $B^{p}\subseteq A^{i}.$ However, this contradicts that
$\mathcal{B}$ is an independent family.
\end{proof}

\qquad\qquad\qquad\qquad\ \ 

With Proposition \ref{nwdX no nwd} we conclude the following:

\begin{corollary}
If $\mathcal{B}$ is an uncountable independent family, then \textsf{nwd}%
$\left(  X\left(  \mathcal{B}\right)  \right)  $ is tight. In particular, it
is not Kat\v{e}tov below \textsf{nwd. }\label{independientes no nwd}
\end{corollary}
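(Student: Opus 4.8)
The plan is to derive Corollary~\ref{independientes no nwd} directly from the two preceding results in this section together with Proposition~\ref{nwdX no nwd}. First I would recall that by the lemma just proved, if $\mathcal{B}$ is an uncountable independent family (which, by our standing assumption, separates points), then the space $X\left(\mathcal{B}\right)$ has uncountable $\pi$-character at every point. Second, the earlier lemma on independent families tells us that $X\left(\mathcal{B}\right)$ is a countable, Hausdorff, zero dimensional space with no isolated points; in particular it is zero dimensional. Thus $X\left(\mathcal{B}\right)$ satisfies exactly the two hypotheses of Proposition~\ref{nwdX no nwd}: it is zero dimensional and has uncountable $\pi$-character everywhere.

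Applying Proposition~\ref{nwdX no nwd} then immediately gives that \textsf{nwd}$\left(X\left(\mathcal{B}\right)\right)$ is tight and hence \textsf{nwd}$\left(X\left(\mathcal{B}\right)\right)\nleqslant_{\text{\textsf{K}}}$ \textsf{nwd}. The only thing to check is that tightness is the relevant property: by Lemma~\ref{Lema tight no abajo}(1), a tight ideal is never Kat\v{e}tov below \textsf{nwd}, which is precisely the ``In particular'' clause of the corollary. So the proof is essentially a one-line invocation: assemble the hypotheses, cite Proposition~\ref{nwdX no nwd}, done.

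There is no real obstacle here. The only place where one must be slightly careful is to make sure the independent family in question separates points so that the space $X\left(\mathcal{B}\right)$ is actually defined and enjoys the structural properties listed; but this is guaranteed by the blanket convention stated just above Corollary~\ref{independientes no nwd} (``from now on we assume all our independent families separate points''), so nothing extra needs to be verified. In summary: \textbf{Proof.} By the preceding lemma, $X\left(\mathcal{B}\right)$ has uncountable $\pi$-character everywhere, and it is zero dimensional by the structural lemma on $X\left(\mathcal{B}\right)$. Hence Proposition~\ref{nwdX no nwd} applies and yields that \textsf{nwd}$\left(X\left(\mathcal{B}\right)\right)$ is tight, so in particular it is not Kat\v{e}tov below \textsf{nwd}.
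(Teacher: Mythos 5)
Your proof is correct and follows exactly the paper's route: the paper derives the corollary by combining the preceding lemma (uncountable $\pi$-character everywhere) with the zero dimensionality of $X\left(\mathcal{B}\right)$ and then citing Proposition \ref{nwdX no nwd}, whose tightness conclusion already carries the ``not Kat\v{e}tov below \textsf{nwd}'' clause (via Lemma \ref{Lema tight no abajo}, as you note).
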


We introduce the following notation:

\begin{definition}
Let $\mathcal{P}\subseteq\left[  \omega\right]  ^{\omega}.$ Define
\textsf{Hit}$\left(  \mathcal{P}\right)  $ as the set of all $X\subseteq
\omega$ that have infinite intersection with every element of $\mathcal{P}.$
\end{definition}

Note that if $\mathcal{F}$ is a filter, then \textsf{Hit}$\left(
\mathcal{F}\right)  $ is just $\mathcal{F}^{+}.$ We will recall the following
result, which is Proposition 6.24 in \cite{HandbookBlass}.

\begin{proposition}
Let $\mathcal{P\subseteq}$ $\left[  \omega\right]  ^{\omega}$ of size less
than $\mathfrak{d}$ and\ $\mathcal{D}\subseteq$ \textsf{Hit}$\left(
\mathcal{P}\right)  $ a countable and decreasing family. There is $A\in$
\textsf{Hit}$\left(  \mathcal{P}\right)  $ that is a pseudointersection of
$\mathcal{D}.$ \label{Prop d Blass}
\end{proposition}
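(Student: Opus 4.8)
The plan is to prove Proposition \ref{Prop d Blass} by a straightforward diagonalization against a dominating-type argument, exploiting that $|\mathcal{P}| < \mathfrak{d}$. First I would set up notation: write $\mathcal{D} = \{D_k \mid k \in \omega\}$ with $D_0 \supseteq D_1 \supseteq \cdots$ (reindexing if necessary), and fix an enumeration $\mathcal{P} = \{P_\xi \mid \xi < \kappa\}$ where $\kappa = |\mathcal{P}| < \mathfrak{d}$. The goal is a single infinite $A$ with $A \subseteq^{\ast} D_k$ for all $k$ and $|A \cap P_\xi| = \omega$ for all $\xi$. The natural strategy is to build $A$ by choosing, inside each $D_k$, enough points spread out to meet every $P_\xi$ infinitely often; the obstruction is that there are potentially uncountably many $P_\xi$ to hit, so we cannot simply handle them one at a time in an $\omega$-length recursion.

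The key idea is to convert the family $\mathcal{P}$ into a family of functions in $\omega^\omega$ of size $\kappa < \mathfrak{d}$, get a single $g \in \omega^\omega$ not dominated by (in fact eventually above, after we use the $\leq$-version of $\mathfrak{d}$) any of them, and read $A$ off from $g$. Concretely: for each $\xi$, enumerate $P_\xi = \{p_\xi(0) < p_\xi(1) < \cdots\}$ and define $f_\xi \in \omega^\omega$ by $f_\xi(n) = p_\xi(n)$, the $n$-th element of $P_\xi$. Since $\kappa < \mathfrak{d}$, the family $\{f_\xi \mid \xi < \kappa\}$ is not dominating, so there is $g \in \omega^\omega$ with $g \not\leq^{\ast} f_\xi$ for every $\xi$, i.e. $g(n) > f_\xi(n)$ for infinitely many $n$, for each $\xi$. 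The point of this is: if $g(n) > f_\xi(n) = p_\xi(n)$, then the interval $[0, g(n))$ contains at least $n+1$ elements of $P_\xi$; so $[0,g(n)) \cap P_\xi$ is large whenever $g$ "beats" $f_\xi$ at $n$. I would then want to build $A$ so that $A$ contains, for infinitely many $n$, a point of $P_\xi \cap [0, g(n))$ that lies inside $D_k$ for all $k \leq n$ — which, since each $[0,g(n)) \cap D_k$ is cofinal inside $P_\xi$ for $n$ large, can be arranged. The cleanest route: I would actually run the recursion on $n \in \omega$, at stage $n$ having committed finitely much of $A$, and add one new point $a_n$ to $A$ chosen from $\bigcap_{k \le n} D_k$ (which is cofinite-in-$D_n$, hence infinite) and above everything chosen so far; then verify $A = \{a_n \mid n\in\omega\}$ is a pseudointersection of $\mathcal{D}$ automatically, and handle the hitting requirement by the interval trick above rather than by naming the $P_\xi$'s directly.

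So the actual recursion would be: maintain an increasing sequence $n_0 < n_1 < \cdots$ of "heights" and pick, at step $j$, a point $a_j \in \bigcap_{k \le j} D_k$ with $a_j$ larger than all previous $a_i$; set $A = \{a_j \mid j \in \omega\}$. Pseudointersection of $\mathcal{D}$: since $D_k$ is decreasing, $a_j \in D_k$ for all $j \ge k$, so $A \setminus D_k \subseteq \{a_0,\dots,a_{k-1}\}$ is finite. For the hitting property, one must show one can \emph{also} arrange $|A \cap P_\xi| = \omega$ for all $\xi$. Here is where $g$ enters: enumerate the bound differently. I would instead choose $g$ to $\leq$-dominate-fail against the functions $f_\xi'(n) := $ "the least $m$ such that $P_\xi \cap D_n$ has at least $n$ elements below $m$" — this is well-defined since $P_\xi \in \textsf{Hit}(\mathcal{P})$ forces $P_\xi$ to be infinite and $D_n$ is cofinite-in-$D_0$... wait, we only know $\mathcal{D} \subseteq \textsf{Hit}(\mathcal{P})$, so $|P_\xi \cap D_n| = \omega$ for every $\xi$ and $n$, which is exactly what makes $f_\xi'$ total. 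Then $\kappa < \mathfrak{d}$ gives $g$ with $g(n) > f_\xi'(n)$ infinitely often, for each $\xi$; guarantee $a_j \ge g(j)$ — no, rather pick $a_j$ to be an element of $\bigcap_{k\le j} D_k$ in the interval $[\text{(previous points)}, g(j))$, possible once $g(j)$ is large enough, and for the finitely many small $j$ where it isn't just pick anything in $\bigcap_{k\le j}D_k$ above the previous points. Now for fixed $\xi$: at each of the infinitely many $n$ with $g(n) > f_\xi'(n)$, the set $[0,g(n)) \cap D_n \cap P_\xi$ has at least $n$ elements, so in particular is nonempty, so $a_n$ could have been chosen there — the recursion should at such stages \emph{prioritize} putting $a_n \in P_\xi$; but there are many $\xi$'s, so I would instead interleave: at stage $j$, we are not trying to satisfy any single $\xi$, we just need the cumulative set $A$ to be cofinal in $P_\xi \cap D_k$ for each $(\xi, k)$. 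The honest conclusion is that the step I expect to be the main obstacle is exactly this bookkeeping: making one $\omega$-recursion simultaneously witness infinite intersection with uncountably many sets $P_\xi$. The resolution I'd bet on is that it suffices to make $A$ itself a member of $\textsf{Hit}(\mathcal{P})$ by ensuring $A$ contains, cofinally, points that are "generic enough" — specifically, invoke Proposition 6.24 of \cite{HandbookBlass} in its full strength is circular, so instead I expect the real argument is: $|\mathcal{P}| < \mathfrak{d} = \textsf{non}(\textsf{P}^+)$ means any ideal of cofinality $< \mathfrak{d}$ is $\textsf{P}^+$, and one packages $\textsf{Hit}(\mathcal{P})$ together with the (countably generated) filter dual to $\mathcal{D}$ — but these don't obviously form an ideal of small cofinality. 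Given the difficulty, and since the proposition is cited as a known result (Proposition 6.24 of \cite{HandbookBlass}), I would ultimately present the short route: reduce to the characterization of $\mathfrak{d}$ by interval partitions. The family $\mathcal{P}$ of size $< \mathfrak{d}$ does not dominate, so there is an interval partition $I = \{I_n \mid n \in \omega\}$ such that for every $\xi$, infinitely many $I_n$ satisfy $I_n \subseteq P_\xi$? — no, that's false. The correct statement: for every $P \in [\omega]^\omega$ there's an interval partition such that... Let me not overclaim. The plan, honestly stated: build $A$ by a single $\omega$-recursion, at step $j$ picking $a_j \in \bigcap_{k \le j} D_k$ above all previous points; separately use $|\mathcal{P}| < \mathfrak{d}$ to produce a function $g$ eventually beating the "$n$-th element of $P_\xi$ inside $\bigcap_{k\le n} D_k$" functions infinitely often, and arrange $a_j < g(j)$ for large $j$ while still landing in $\bigcap_{k\le j} D_k$; then a counting argument shows $A \cap P_\xi$ is infinite. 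The pseudointersection property is free from the decreasing-ness of $\mathcal{D}$. The main obstacle, and the only nonroutine point, is verifying the hitting property for all $\xi$ simultaneously from the single diagonal function $g$, which works because "$g(n)$ beats $f_\xi(n)$" already delivers $\ge n$ usable points in the right window, so infinitely many stages $j$ are "good for $\xi$" and at each we can have placed $a_j \in P_\xi$ — and since this must hold for infinitely many $j$ for \emph{every} $\xi$, and the "good" stages for different $\xi$ overlap heavily, one checks the construction need only, at each stage $j$, pick $a_j$ anywhere legal and the beating property of $g$ forces $|A \cap P_\xi| = \omega$ a posteriori.
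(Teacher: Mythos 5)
The paper itself does not prove this proposition; it simply cites it as Proposition 6.24 of the Blass Handbook chapter, so there is no in-paper argument to match. Judged on its own, your proposal contains the right raw ingredients (the functions measuring ``where $P_\xi$ next meets $D_n$'' and a witness $g$ from $|\mathcal{P}|<\mathfrak{d}$), but the construction you finally commit to has a genuine gap. You build $A=\{a_j\mid j\in\omega\}$ by adding \emph{one} point per stage, chosen ``anywhere legal'' in $\bigcap_{k\le j}D_k\cap[\max_{i<j}a_i+1,\,g(j))$, and you assert that the beating property of $g$ forces $|A\cap P_\xi|=\omega$ a posteriori. That is false: nothing in the legality constraint ever mentions $P_\xi$, so the choices can systematically avoid a fixed $P_\xi$. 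Concretely, take every $D_k=\omega$ and let $P_{\xi_0}$ be the even numbers; at each stage the window contains odd numbers, and choosing those yields $A\cap P_{\xi_0}=\emptyset$ while every step was ``legal''. Nor can you repair this by prioritizing: with uncountably many $\xi$ (e.g.\ an almost disjoint $\mathcal{P}$ of size $\omega_1<\mathfrak{d}$) you cannot devote infinitely many of the countably many stages to each $\xi$, which is exactly the obstacle you flagged and then waved away.

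The fix is to put in whole finite blocks rather than single points, at which point the a posteriori argument really does work. For each $P\in\mathcal{P}$ let $f_P(n)=\min\bigl(P\cap D_n\setminus(n+1)\bigr)$, well defined because $D_n\in\mathsf{Hit}(\mathcal{P})$. Since $|\mathcal{P}|<\mathfrak{d}$, the family $\{f_P\mid P\in\mathcal{P}\}$ is not dominating, so there is an increasing $g$ with $g(n)>f_P(n)$ for infinitely many $n$, for every $P$. Set $A=\bigcup_{n\in\omega}\bigl(D_n\cap(n,g(n)]\bigr)$. Then $A\setminus D_k\subseteq\bigcup_{n<k}(n,g(n)]$ is finite because the $D_n$ decrease, so $A$ is a pseudointersection of $\mathcal{D}$; and for each $P$ and each of the infinitely many $n$ with $g(n)>f_P(n)$, the block $D_n\cap(n,g(n)]$ meets $P$ above $n$, so $A\cap P$ is infinite. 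Your single-point recursion makes the pseudointersection part trivial but destroys the hitting part; the block version keeps both, and is the standard proof of the cited result.
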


We have the following characterization of the dominating number:

\begin{theorem}
The cardinal $\mathfrak{d}$ is the least size of an independent family
$\mathcal{B}$ such that \textsf{nwd}$\left(  X\left(  \mathcal{B}\right)
\right)  $ is not \textsf{P}$^{-}.$ \label{independiente p-}
\end{theorem}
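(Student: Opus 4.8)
The plan is to prove the two inequalities separately. For the easy direction, $\mathfrak{d}\le$ (the least size of an independent $\mathcal{B}$ with $\mathsf{nwd}(X(\mathcal{B}))$ not $\mathsf{P}^-$), I would argue that if $\mathcal{B}$ is an independent family of size $<\mathfrak{d}$, then $\mathsf{nwd}(X(\mathcal{B}))$ is $\mathsf{P}^-$. Fix $Y\in\mathsf{nwd}(X(\mathcal{B}))^+$ and a countable decreasing family $\{X_n\mid n\in\omega\}\subseteq(\mathsf{nwd}(X(\mathcal{B}))\restriction Y)^*$; I want a pseudointersection in $\mathsf{nwd}(X(\mathcal{B}))^+$. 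Being positive in the restriction to $Y$ means each $X_n$ is somewhere dense relative to $Y$; since $Y$ is somewhere dense we may shrink to a fixed basic open set $U=B^p\subseteq\overline Y$ and note each $X_n$ is dense in $U$. Now the family $\mathcal{P}$ consisting of $A\cap U$ for $A$ a basic clopen subset of $U$ (equivalently, $B^q$ with $q\supseteq p$) has size $|\mathcal{B}|<\mathfrak{d}$, and the sets $X_n$ all lie in $\mathsf{Hit}(\mathcal{P})$ because they are dense in $U$. Proposition~\ref{Prop d Blass} then yields $A\in\mathsf{Hit}(\mathcal{P})$ that is a pseudointersection of $\{X_n\}$; since $A$ meets every basic clopen subset of $U$, it is dense in $U$ and hence not nowhere dense, i.e.\ $A\in\mathsf{nwd}(X(\mathcal{B}))^+$. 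This shows $\mathsf{nwd}(X(\mathcal{B}))$ is $\mathsf{P}^-$.

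For the reverse inequality I would construct, for $\kappa=\mathfrak{d}$, an independent family $\mathcal{B}$ of size $\mathfrak{d}$ with $\mathsf{nwd}(X(\mathcal{B}))$ not $\mathsf{P}^-$. Recall from Table~2 that $\mathsf{nwd}(X(\mathcal{B}))$ fails $\mathsf{P}^-$ precisely when $\mathsf{fin}\times\mathsf{fin}\le_{\mathsf{K}}\mathsf{nwd}(X(\mathcal{B}))\restriction Y$ for some positive $Y$. I would instead build $\mathcal{B}$ directly so that witnessing data appears: starting from a perfect (hence size-$\mathfrak{c}$, so certainly size-$\mathfrak{d}$) independent family and a dominating family $\{f_\alpha\mid\alpha<\mathfrak{d}\}$ (or a dominating family of interval partitions), I would interleave a recursive construction of length $\mathfrak{d}$, at stage $\alpha$ adding a new set $A_\alpha$ to keep the family independent while arranging that on some fixed positive set $Y$ the clopen trace behaves like $\mathsf{fin}\times\mathsf{fin}$. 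Concretely, the goal is a decreasing sequence $\{X_n\mid n\in\omega\}$ of sets in $(\mathsf{nwd}(X(\mathcal{B}))\restriction Y)^*$ with no pseudointersection in $\mathsf{nwd}(X(\mathcal{B}))^+$; the diagonalization over the dominating family is exactly what lets us "catch" every candidate pseudointersection $A$ and force it to be nowhere dense by making it miss one of a cofinal-in-$\mathfrak{d}$ collection of clopen pieces.

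The hard part will be the construction in the reverse direction: one must maintain independence of $\mathcal{B}$ throughout a recursion of length $\mathfrak{d}$ \emph{and} simultaneously code enough structure into the envelope so that $\mathsf{nwd}(X(\mathcal{B}))$ restricted to a positive set contains an isomorphic copy of $\mathsf{fin}\times\mathsf{fin}$. The bookkeeping has to ensure both that the sets $X_n$ we designate stay positive (dense in a fixed open set) at every stage — this is where the independence-preservation lemma (Lemma part 3, $\mathcal{B}\cup\{A\}$ independent iff $A$ and its complement are dense) does the work — and that every potential pseudointersection, enumerated against the dominating family, is eventually killed. Getting the two requirements to coexist, and verifying that $|\mathcal{B}|$ ends up exactly $\mathfrak{d}$ rather than larger, is the delicate point; the rest is the routine translation through Table~2 and the clopen-base description of $X(\mathcal{B})$.
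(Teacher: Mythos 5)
Your first inequality is correct and is essentially the paper's own argument: for $\left\vert \mathcal{B}\right\vert <\mathfrak{d}$ you trace the countable decreasing family against the (at most $\left\vert \mathcal{B}\right\vert $-many) basic clopen sets $B^{q}$ below a condition $p$ for which $Y$ is dense in $B^{p}$, apply Proposition \ref{Prop d Blass}, and observe that the resulting pseudointersection meets every such $B^{q}$ infinitely, hence is dense in $B^{p}$ and therefore positive. Apart from the cosmetic choice of $\left\{  B^{q}\mid p\subseteq q\right\}  $ versus $\left\{  B^{q}\cap Y\mid p\subseteq q\right\}  $, this is the same proof.

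The other inequality is where the content of the theorem lies, and there your proposal is an acknowledged sketch with a genuine gap in the one place you yourself flag as delicate. You propose a recursion of length $\mathfrak{d}$ in which ``every potential pseudointersection, enumerated against the dominating family, is eventually killed.'' But the candidate pseudointersections are arbitrary infinite sets, of which there are $\mathfrak{c}$ many, and $\mathfrak{d}<\mathfrak{c}$ is consistent, so a bookkeeping of length $\mathfrak{d}$ cannot enumerate them; the missing idea is exactly how $\mathfrak{d}$-many requirements dispose of continuum-many candidates. The paper needs no recursion or bookkeeping at all: it defines the family in one shot on $\omega\times\omega$ by fusing an independent family $\left\{  A_{\alpha}\mid\alpha<\mathfrak{d}\right\}  $ on $\omega$ with an everywhere ($\leq$-)dominating family $\left\{  f_{\alpha}\mid\alpha<\mathfrak{d}\right\}  $, setting $B_{\alpha}=\left\{  \left(  n,m\right)  \mid n\in A_{\alpha}\wedge f_{\alpha}\left(  n\right)  <m\right\}  $. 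Independence is witnessed by columns (a suitable column is almost contained in each finite Boolean combination), every column $C_{n}$ is nowhere dense in $X\left(  \mathcal{B}\right)  $, and every set $\left\{  \left(  n,m\right)  \mid m\leq f_{\alpha}\left(  n\right)  \right\}  $ is nowhere dense; since the $f_{\alpha}$ dominate with respect to $\leq$, this gives \textsf{fin}$\times$\textsf{fin }$\subseteq$ \textsf{nwd}$\left(  X\left(  \mathcal{B}\right)  \right)  $, and that containment handles all candidates uniformly: any pseudointersection of the complements of the columns has finite trace on each column, hence lies below the graph of some $f_{\alpha}$ and is nowhere dense, so the ideal is not \textsf{P}$^{-}$. (A small additional slip: a perfect independent family has size $\mathfrak{c}$, not $\mathfrak{d}$; one must pass to a subfamily of size $\mathfrak{d}$.) Until you either produce such an explicit family or explain how a diagonalization of length $\mathfrak{d}$ can meet continuum-many requirements, the second inequality remains unproved.
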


\begin{proof}
Let $\mathcal{B}\subseteq\left[  \omega\right]  ^{\omega}$ be an independent
family of size less than $\mathfrak{d},$ we will prove that \textsf{nwd}%
$\left(  X\left(  \mathcal{B}\right)  \right)  $ is \textsf{P}$^{-}.$ Let
$Y\in$ \textsf{nwd}$\left(  X\left(  \mathcal{B}\right)  \right)  ^{+}$ and
$\mathcal{D}\subseteq$ $($\textsf{nwd}$\left(  X\left(  \mathcal{B}\right)
\right)  \upharpoonright Y)^{\ast}$ countable and decreasing. We need to find
a pseudointersection in \textsf{nwd}$\left(  X\left(  \mathcal{B}\right)
\right)  ^{+}.$ Since $Y$ is somewhere dense, we may find $p\in\mathbb{C}%
_{\mathcal{B}}$ such that $Y$ is dense in $B^{p}.$ Let $\mathcal{P=}%
\{B^{q}\cap Y\mid q\in\mathbb{C}_{\mathcal{B}}\wedge p\subseteq q\}.$ It
follows that $\mathcal{D\in}$\textsf{Hit}$\left(  \mathcal{P}\right)  $ and
$\mathcal{P}$ has size less than $\mathfrak{d}$. We can then invoke
Proposition \ref{Prop d Blass} and obtain $A\in$ $\mathcal{P}^{+}$ a
pseudointersection of $\mathcal{D}.$ It follows that $A$ is dense in $B^{p}$,
so $A\in$ \textsf{nwd}$\left(  X\left(  \mathcal{B}\right)  \right)  ^{+}.$

\qquad\qquad\qquad\qquad\qquad

We now need to find an independent family of size $\mathfrak{d}$ whose nowhere
dense ideal is Kat\v{e}tov above \textsf{fin}$\times$\textsf{fin. }For
every\textsf{ }$n\in\omega,$ we denote the column $C_{n}=\left\{  n\right\}
\times\omega.$ Fix $\mathcal{A}=\left\{  A_{\alpha}\mid\alpha<\mathfrak{d}%
\right\}  $ an independent family such that for every $n\in\omega,$ there are
infinitely many elements of $\mathcal{A}$ for which $n$ belongs and
$\mathcal{D}=\left\{  f_{\alpha}\mid\alpha<\mathfrak{d}\right\}
\subseteq\omega^{\omega}$ a $\leq$-dominating family (not only with
$\leq^{\ast}$). We fuse $\mathcal{A}$ and $\mathcal{D}$ as follows: for any
$\alpha<\mathfrak{d}$, define $B_{\alpha}=\left\{  \left(  n,m\right)  \mid
n\in A_{\alpha}\wedge f_{\alpha}\left(  n\right)  <m\right\}  .$ Let
$\mathcal{B}=\left\{  B_{\alpha}\mid\alpha<\mathfrak{d}\right\}  .$

\begin{claim}
$\mathcal{B}$ is an independent family.
\end{claim}

\qquad\qquad\qquad\ \ \ 

Let $F,G\subseteq\mathfrak{d}$ finite and disjoint. We need to prove that $Y=$
$%
{\textstyle\bigcap\limits_{\alpha\in F}}
B_{\alpha}\cap%
{\textstyle\bigcap\limits_{\beta\in G}}
\left(  \omega\times\omega\setminus B_{\beta}\right)  $ is infinite. Since
$\mathcal{A}$ is independent, we can find $k\in$ $%
{\textstyle\bigcap\limits_{\alpha\in F}}
A_{\alpha}\cap%
{\textstyle\bigcap\limits_{\beta\in G}}
\left(  \omega\setminus A_{\beta}\right)  .$ It follows that the column
$C_{k}$ is almost contained in $Y.$ This finishes the claim.

\qquad\ \ 

We will prove that each column $C_{n}$ is nowhere dense. Let $p\in
\mathbb{C}_{\mathcal{B}}$ and we look at the open set $B^{p}.$ Pick any
$\alpha<\mathfrak{d}$ such that $B_{\alpha}\notin$ \textsf{dom}$\left(
p\right)  $ and $n\in B_{\alpha}.$ Define $q=p\cup\left\{  \left(
\alpha,0\right)  \right\}  .$ It follows that $B^{q}\subseteq B^{p}$ and
$B^{q}\cap C_{n}$ is finite, so we are done.

\qquad\qquad\qquad

Let $\alpha<\mathfrak{d,}$ we will now prove that $G=\left\{  \left(
n,m\right)  \mid m\leq f_{\alpha}\left(  n\right)  \right\}  $ is nowhere
dense. Let $p\in\mathbb{C}_{\mathcal{B}}$ and we look at the open set $B^{p}.$
Pick any $\beta<\mathfrak{d}$ such that $B_{\beta}\notin$ \textsf{dom}$\left(
p\right)  $ and $f_{\alpha}<f_{\beta}$ for each $\alpha\in$ \textsf{dom}%
$\left(  p\right)  $ (this is possible since $\mathcal{D}$ is a dominating
family). Define $q=p\cup\left\{  \left(  \beta,0\right)  \right\}  .$ It
follows that $B^{q}\subseteq B^{p}$ and $B^{q}\cap G=\emptyset$, so we are done.

\qquad\qquad\qquad

Finally, by the previous two remarks and the fact that $\mathcal{D}$ is a
dominating family, we conclude that \textsf{fin}$\times$\textsf{fin
}$\subseteq$ \textsf{nwd}$\left(  X\left(  B\right)  \right)  .$ We rejected
to verify that $\mathcal{B}$ separates points. However, if this was not the
case, we can perform finite changes to countably many elements of
$\mathcal{B}$ and make it separate points. The arguments described above still
work after this finite modifications.
\end{proof}

\qquad\ \ \ \ 

On the other hand, we have the following:

\begin{proposition}
Let $\mathcal{B}$ be an independent family. If $\left\vert \mathcal{B}%
\right\vert <$ \textsf{non*}$\mathsf{(}\mathcal{ED}_{\text{\textsf{fin}}}),$
then \textsf{nwd}$\left(  \mathcal{B}\right)  $ is \textsf{Q}$^{+}.$
\label{independiente Q+}
\end{proposition}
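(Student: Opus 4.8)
The plan is to show that $\mathsf{nwd}(X(\mathcal{B}))$ is $\mathsf{Q}^+$ by verifying the combinatorial characterization from Table 2: for every $Y \in \mathsf{nwd}(X(\mathcal{B}))^+$ and every partition $\mathcal{Q} = \{Q_n \mid n \in \omega\}$ of $Y$ into finite pieces, I must produce a partial selector $A$ of $\mathcal{Q}$ that lies in $\mathsf{nwd}(X(\mathcal{B}))^+$. Equivalently, by the characterization of $\mathsf{non}^*(\mathcal{ED}_{\mathsf{fin}})$ from Theorem \ref{cosas nonED}(2), since the partition $\mathcal{Q}$ (extended to all of $\omega$ by singletons) can be reorganized into an increasing interval partition after a suitable renumbering, I will have at my disposal the principle that for any family of fewer than $\mathsf{non}^*(\mathcal{ED}_{\mathsf{fin}})$ partial infinite selectors of an increasing partition, there is a selector meeting all of them infinitely often. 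The point is that $Y$ is somewhere dense, so fix $p \in \mathbb{C}_{\mathcal{B}}$ with $Y$ dense in $B^p$; then for each $q \supseteq p$ in $\mathbb{C}_{\mathcal{B}}$, the set $B^q \cap Y$ is infinite, and these sets — there are fewer than $\mathsf{non}^*(\mathcal{ED}_{\mathsf{fin}})$ of them since $|\mathcal{B}| < \mathsf{non}^*(\mathcal{ED}_{\mathsf{fin}})$ — will play the role of the family that the selector must hit.

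First I would set up the reduction carefully. Given $Y$ and $\mathcal{Q}$, note that a partial selector of $\mathcal{Q}$ that is dense in $B^p$ automatically lies in $\mathsf{nwd}(X(\mathcal{B}))^+$ (its closure contains the nonempty open set $B^p$), so it suffices to find a partial selector of $\mathcal{Q}$ hitting $B^q \cap Y$ for every $q \in \mathbb{C}_{\mathcal{B}}$ with $q \supseteq p$ — in fact hitting each such set is enough to guarantee density in $B^p$, since $\{B^q \mid p \subseteq q\}$ is a base for the subspace topology on $B^p$ restricted to $Y$. Now I need to phrase "partial selector of $\mathcal{Q}$" as "selector of an increasing interval partition." Enumerate $Y$ so that each $Q_n$ is an interval, and pad with finite blocks so that block sizes are strictly increasing; this only changes things by a finite modification and does not affect density. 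The family $\mathcal{P} = \{B^q \cap Y \mid q \in \mathbb{C}_{\mathcal{B}},\ p \subseteq q\}$ is a family of infinitely many infinite subsets of $Y$, of cardinality $|\mathbb{C}_{\mathcal{B}}| = |\mathcal{B}| < \mathsf{non}^*(\mathcal{ED}_{\mathsf{fin}})$; viewed inside the padded increasing partition these are partial infinite selectors only after we thin them — and here is the subtlety.

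The main obstacle will be the mismatch between "infinite subset of $Y$" and "partial selector of the partition $\mathcal{Q}$": an arbitrary $B^q \cap Y$ may contain many points of a single block $Q_n$, so it is not itself a partial selector, and Theorem \ref{cosas nonED}(2) applies to a family of partial selectors. The fix is to thin each $B^q \cap Y$ to an infinite partial selector $S_q \subseteq B^q \cap Y$ of $\mathcal{Q}$ before invoking the theorem; this is possible because $B^q \cap Y$ is infinite while each block $Q_n$ is finite, so $B^q \cap Y$ meets infinitely many blocks and we may choose one point from each. Having replaced $\mathcal{P}$ by $\{S_q \mid p \subseteq q\}$, a family of fewer than $\mathsf{non}^*(\mathcal{ED}_{\mathsf{fin}})$ partial infinite selectors of the (padded, increasing) partition, Theorem \ref{cosas nonED}(2) delivers a selector $A$ of that partition with $A \cap S_q$ infinite for every $q \supseteq p$. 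Then $A$ is a partial selector of $\mathcal{Q}$ (restricting $A$ to $Y$ and discarding the padding blocks), and since $A$ meets every $B^q \cap Y$ (as $A \cap S_q \neq \emptyset$ and $S_q \subseteq B^q \cap Y$), the set $A \cap Y$ is dense in $B^p$, hence $A \cap Y \in \mathsf{nwd}(X(\mathcal{B}))^+$. This is the required partial selector, so $\mathsf{nwd}(X(\mathcal{B}))$ is $\mathsf{Q}^+$. As in the proof of Theorem \ref{independiente p-}, one checks that the harmless finite modifications needed to make $\mathcal{B}$ separate points do not disturb the argument.
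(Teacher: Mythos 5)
Your proposal is correct and follows essentially the same route as the paper: fix $p$ with $Y$ dense in $B^p$, thin each $B^q\cap Y$ (for $q\supseteq p$) to a partial infinite selector, and apply the characterization of \textsf{non*}$(\mathcal{ED}_{\text{\textsf{fin}}})$ from Theorem \ref{cosas nonED}(2) to get a selector meeting all of them, which is then dense in $B^p$ and hence positive. Your extra care about recasting the partition of $Y$ as an increasing interval partition is a point the paper passes over with a brief ``we may assume,'' but it is the same argument.
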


\begin{proof}
Let $\mathcal{B}\subseteq\left[  \omega\right]  ^{\omega}$ be an independent
family of size $\kappa<$ \textsf{non*}$\mathsf{(}\mathcal{ED}%
_{\text{\textsf{fin}}})$, $Y\in$ \textsf{nwd}$\left(  X\left(  \mathcal{B}%
\right)  \right)  ^{+}$ and $\mathcal{P=}\left\{  P_{n}\mid n\in
\omega\right\}  $ an interval partition of $Y.$ We need to find a positive
partial selector. We may assume the intervals of $\mathcal{P}$ are increasing
in size. Since $Y$ is not nowhere dense, we may find $p\in\mathbb{C}%
_{\mathcal{B}}$ such that $Y$ is dense in $B^{p}.$ Let $\mathcal{U=}%
\{B^{q}\cap Y\mid q\in\mathbb{C}_{\mathcal{B}}\wedge p\subseteq q\}.$ For
every $U\in\mathcal{U},$ we find $f_{U}$ a partial infinite function such that
for every $n\in$ \textsf{dom}$\left(  f_{U}\right)  ,$ we have that $f_{U}\in
P_{n}\cap U.$ Since $\left\vert \mathcal{U}\right\vert <$ \textsf{non*}%
$\mathsf{(}\mathcal{ED}_{\text{\textsf{fin}}}),$ we can find a function
$h\in\omega^{\omega}$ such that $h\left(  n\right)  \in P_{n}$ for every
$n\in\omega$ and has infinite intersection with each $f_{U}$ for
$U\in\mathcal{U}.$ It follows that the image of $h$ is dense in $B^{p}$ and it
is a selector, so we are done.
\end{proof}

\qquad

We do not know the following:

\begin{problem}
Is there an independent family $\mathcal{B}$ of size \textsf{non*}%
$\mathsf{(}\mathcal{ED}_{\text{\textsf{fin}}})$ such that \textsf{nwd}$\left(
\mathcal{B}\right)  $ is not \textsf{Q}$^{+}$?
\end{problem}

\qquad\qquad\ \ \ 

We can now prove:

\begin{theorem}
If $\omega_{1}<$ \textsf{cov}$\left(  \mathcal{M}\right)  ,$ then there is
$\mathcal{B}$ an independent family such that \textsf{nwd}$\left(  X\left(
\mathcal{B}\right)  \right)  $ does not satisfy the Category Dichotomy.
\end{theorem}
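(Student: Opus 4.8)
The goal is to produce, under the hypothesis $\omega_1<\textsf{cov}(\mathcal M)$, an independent family $\mathcal B$ for which $\textsf{nwd}(X(\mathcal B))$ violates the Category Dichotomy. The two clauses of the dichotomy are: being Kat\v etov below $\textsf{nwd}$, and having a restriction Kat\v etov above $\mathcal{ED}$. Corollary~\ref{independientes no nwd} already tells us that if $\mathcal B$ is uncountable then $\textsf{nwd}(X(\mathcal B))$ is tight, hence not Kat\v etov below $\textsf{nwd}$; so clause~(1) fails automatically. Thus the entire task reduces to arranging that $\textsf{nwd}(X(\mathcal B))$ has \emph{no} restriction Kat\v etov above $\mathcal{ED}$, i.e.\ (using Proposition~\ref{nwdX no ED}'s phrasing and Table~2) that $\textsf{nwd}(X(\mathcal B))$ is weakly selective. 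By the observation that weak selectivity is equivalent to being simultaneously $\textsf{P}^{-}$ and $\textsf{Q}^{+}$, it suffices to build an uncountable $\mathcal B$ whose nowhere dense ideal is both $\textsf{P}^{-}$ and $\textsf{Q}^{+}$.

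\textbf{Choice of the family.} The natural move is to take $\mathcal B$ of size $\omega_1$. Then two results from this section apply directly. Since $\omega_1<\mathfrak d$ (this follows from $\omega_1<\textsf{cov}(\mathcal M)\le\mathfrak d$), Theorem~\ref{independiente p-} gives that \emph{every} independent family of size less than $\mathfrak d$ has $\textsf{P}^{-}$ nowhere dense ideal; in particular our $\mathcal B$ does. Since $\omega_1<\textsf{cov}(\mathcal M)\le\textsf{non}^*(\mathcal{ED}_{\textsf{fin}})$ — the inequality $\textsf{cov}(\mathcal M)=\min\{\mathfrak d,\textsf{non}^*(\mathcal{ED}_{\textsf{fin}})\}\le\textsf{non}^*(\mathcal{ED}_{\textsf{fin}})$ is Theorem~\ref{cosas nonED} — Proposition~\ref{independiente Q+} gives that every independent family of size less than $\textsf{non}^*(\mathcal{ED}_{\textsf{fin}})$ has $\textsf{Q}^{+}$ nowhere dense ideal; in particular our $\mathcal B$ does. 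So any independent family of size exactly $\omega_1$ works, and such families exist by the Fichtenholz--Kantorovich theorem (there is a perfect independent family, so independent families of every infinite cardinality $\le\mathfrak c$ exist).

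\textbf{Assembling the argument.} First I would fix $\mathcal B$ an independent family with $|\mathcal B|=\omega_1$; by our standing convention we may assume it separates points (or adjust it by finite modifications to countably many members, which does not affect independence or the topology in a way that matters). Since $\mathcal B$ is uncountable, Corollary~\ref{independientes no nwd} shows $\textsf{nwd}(X(\mathcal B))$ is tight, hence $\textsf{nwd}(X(\mathcal B))\nleq_{\textsf K}\textsf{nwd}$, so alternative~(1) of the dichotomy fails. Next, from $\omega_1<\textsf{cov}(\mathcal M)$ and the two displayed inequalities, I get $\omega_1<\mathfrak d$ and $\omega_1<\textsf{non}^*(\mathcal{ED}_{\textsf{fin}})$. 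Applying Theorem~\ref{independiente p-} and Proposition~\ref{independiente Q+} respectively, $\textsf{nwd}(X(\mathcal B))$ is both $\textsf{P}^{-}$ and $\textsf{Q}^{+}$, hence weakly selective. By the combinatorial characterisation in Table~2, $\mathcal{ED}\nleq_{\textsf K}\textsf{nwd}(X(\mathcal B))\upharpoonright Y$ for every $Y\in\textsf{nwd}(X(\mathcal B))^{+}$, so alternative~(2) of the dichotomy also fails. Therefore $\textsf{nwd}(X(\mathcal B))$ does not satisfy the Category Dichotomy, completing the proof.

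\textbf{Where the difficulty lies.} There is essentially no new obstacle here: all the substantive work was done in Theorem~\ref{independiente p-} and Proposition~\ref{independiente Q+}, and the present statement is a clean corollary of combining them at cardinality $\omega_1$ under the cardinal-arithmetic hypothesis $\omega_1<\textsf{cov}(\mathcal M)$. The only points requiring a moment's care are (a) verifying that $\textsf{cov}(\mathcal M)\le\mathfrak d$ and $\textsf{cov}(\mathcal M)\le\textsf{non}^*(\mathcal{ED}_{\textsf{fin}})$ so that both size hypotheses are met — both are immediate from Theorem~\ref{cosas nonED} — and (b) recalling the equivalence ``weakly selective $\iff$ $\textsf{P}^{-}$ and $\textsf{Q}^{+}$'' together with the Table~2 line characterising weak selectivity via $\mathcal{ED}$ not being below any restriction. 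If one wanted to be slightly more economical, one could even take $|\mathcal B|=\textsf{cov}(\mathcal M)$ (still $<\mathfrak d$ unless $\textsf{cov}(\mathcal M)=\mathfrak d$, in which case take it $<\textsf{non}^*(\mathcal{ED}_{\textsf{fin}})$ instead), but fixing size $\omega_1$ is the cleanest route.
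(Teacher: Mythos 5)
Your argument is correct and is essentially the paper's own proof, spelled out in more detail: take $\mathcal{B}$ of size $\omega_1$, use Corollary \ref{independientes no nwd} to kill the first alternative, and use $\omega_1<\mathsf{cov}(\mathcal{M})=\min\{\mathfrak{d},\mathsf{non}^*(\mathcal{ED}_{\mathsf{fin}})\}$ together with Theorem \ref{independiente p-} and Proposition \ref{independiente Q+} to get $\mathsf{P}^-$ plus $\mathsf{Q}^+$, i.e.\ weak selectivity, killing the second. No gaps; the extra care you take in citing the "weakly selective $\iff$ $\mathsf{P}^-$ and $\mathsf{Q}^+$" equivalence and the Table~2 characterisation just makes explicit what the paper leaves implicit.
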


\begin{proof}
Let $\mathcal{B}$ be an independent family of size $\omega_{1}.$ Since it is
uncountable, by Corollary \ref{independientes no nwd}, we know that
\textsf{nwd}$\left(  X\left(  \mathcal{B}\right)  \right)  \nleq
_{\text{\textsf{K}}}$ \textsf{nwd. }Finally, the other possibility of the
dichotomy is impossible since \textsf{cov}$\left(  \mathcal{M}\right)  =$
\textsf{min}$\mathsf{(}\mathfrak{d,}$\textsf{non*}$\mathsf{(}\mathcal{ED}%
_{\text{\textsf{fin}}}))$ and by Proposition \ref{independiente Q+} and
Proposition \ref{independiente p-}.
\end{proof}

\qquad\ \ \ 

The following problem remains open:

\begin{problem}
Is it consistent that the Category Dichotomy holds for the nowhere dense
ideals of topologies generated by independent families?
\end{problem}

\qquad\qquad\ \ \ \qquad\ \ 

Building on the results of this section and the previous one, it is natural to
ask the following questions:

\begin{problem}
Let $\mathcal{B}$ be an uncountable independent family. When is $X\left(
\mathcal{B}\right)  $ Fr\'{e}chet? Is there such a family in \textsf{ZFC}?
\end{problem}

\qquad\ \ \ \ 

\begin{acknowledgement}
The second author wishes to thank the Set Theory Seminar at Universidad de los
Andes (Bogot\'{a}) for their generous feedback. We would like to thank David
Valderrama, Ramiro de la Vega, and Sebastian Rodriguez for their valuable
comments and contributions.
\end{acknowledgement}

{\normalsize
\bibliographystyle{plain}
\bibliography{Bibliografia}
}

\qquad\qquad\qquad\ \ \ \ \ \ \ \ \ \ \ \ \ \ \ \ \ \ \ \qquad\qquad\qquad\ \ \ 

Alan Dow

Department of Mathematics and Statistics, UNC Charlotte

adow@charlotte.edu

\qquad\qquad\qquad\ \ \ \ \ \ \ \ \ \ \ \ \qquad\ \ \ \ \ \qquad\qquad\ \ \ 

Ra\'{u}l Figueroa-Sierra

Departamento de Matem\'{a}ticas, Universidad de Los Andes (Bogotá)

r.figueroa@uniandes.edu.co

\qquad\qquad\qquad\ \ \ \ \ \ \ \ \ \ \ \ \qquad\ \ \ \ \ \qquad\qquad\ \ \ 

Osvaldo Guzm\'{a}n

Centro de Ciencias Matem\'{a}ticas, UNAM.

oguzman@matmor.unam.mx

\qquad\qquad\qquad\ \ \ \ \ \ \ \qquad\qquad\qquad\ \ \ \ \ \ 

Michael Hru\v{s}\'{a}k

Centro de Ciencias Matem\'{a}ticas, UNAM.

michael@matmor.unam.mx

\end{document}